\newtheorem{thm}{Theorem}[section]
\newtheorem{prop}[thm]{Proposition}
\newtheorem{lem}[thm]{Lemma}
\newtheorem{cor}[thm]{Corollary}
\newtheorem*{que*}{Question}
\theoremstyle{definition}
\newtheorem{definition}[thm]{Definition}
\newtheorem{example}[thm]{Example}
\theoremstyle{remark}
\newtheorem{notation}[thm]{Notation}
\newtheorem{remark}[thm]{Remark}
\numberwithin{equation}{section}
\newcommand{\absv}[1]{\left\lvert#1\right\rvert_v}
\newcommand{\Abb}{\mathbb A}
\newcommand{\Fbb}{\mathbb F}
\newcommand{\Rbb}{\mathbb R}
\newcommand{\Zbb}{\mathbb Z}
\newcommand{\Acal}{\mathcal A} 
\newcommand{\Bcal}{\mathcal B} 
\newcommand{\Ecal}{\mathcal E}
\newcommand{\Xscr}{\mathscr X}
\newcommand{\bott}{\mathrm{bot}}
\newcommand{\GL}{\mathrm{GL}}
\newcommand{\PGL}{\mathrm{PGL}}
\newcommand{\PSL}{\mathrm{PSL}}
\newcommand{\topp}{\mathrm{top}}
\newcommand{\ulim}{\mathrm{ulim}\ }
\thanks{This research was partially supported by the grant DMS-1406559 from the U.S. National Science Foundation. In addition, the author gratefully acknowledges support from the NSF grants DMS-1107452, 1107263 and 1107367 ``RNMS: GEometric structures And Representation varieties'' (the GEAR Network).}
\title[Positive configurations in a building and positive representations]{Positive configurations of flags in a building\\ and limits of positive representations}
\author{Giuseppe Martone}
\begin{document}
\maketitle
%\textbf{Tasks:} Abstract, Acknowledgments
\noindent
\textbf{Abstract:} Parreau compactified the Hitchin component of a closed surface $S$ of negative Euler characteristic in such a way that a boundary point corresponds to the projectivized length spectrum of an action of $\pi_1(S)$ on an $\Rbb$-Euclidean building. In this paper, we use the positivity properties of Hitchin representations introduced by Fock and Goncharov to explicitly describe the geometry of a preferred collection of apartments in the limiting building.
\tableofcontents

%%%%%%%%%%%%%%%%%%%%%%%%%%%%%%%
%%%%%%%%%%%%%%%%%%%%%%%%%%%%%%%
\section{Introduction}\label{sec:intro}
%%%%%%%%%%%%%%%%%%%%%%%%%%%%%%%
%%%%%%%%%%%%%%%%%%%%%%%%%%%%%%%

Let $S$ be a connected, closed, oriented surface with negative Euler characteristic $\chi(S)$. The \emph{Teichm\"uller space} $\mathcal T(S)$ of $S$ is the space of isotopy classes of hyperbolic metrics on $S$. It is homeomorphic to $\Rbb^{-3\chi(S)}$. 

Thurston \cite{Thu, FLP} compactified $\mathcal T(S)$ in such a way that the resulting space $\overline{\mathcal T(S)}$ is homeomorphic to a closed ball of dimension $-3\chi(S)$. The boundary points of $\overline{\mathcal T(S)}$ can be described from different perspectives \cite{Bes, Bon2, MoSh, Pau1}. In particular, Morgan and Shalen used an algebro-geometric approach to realize these boundary points as length spectra of isometric actions of $\pi_1(S)$ on $\Rbb$-trees. An important point for their construction is that the Teichm\"uller space can be identified with a subspace of an affine variety. In fact, the holonomies of hyperbolic metrics let us realize $\mathcal T(S)$ as a connected component of the character variety
\[
\mathrm{Hom}(\pi_1(S),\PSL(2,\Rbb))/\!\!/\PSL(2,\Rbb).
\]
where $\PSL(2,\Rbb)$ acts by conjugation and we consider, as usual, the quotient in the sense of geometric invariant theory; see \cite{MFK} for details.

This description of $\mathcal T(S)$ is prone to generalizations. One can investigate subsets of different character varieties that share some of the properties of the Teichm\"uller space. For example, the natural action of $\mathrm{SL}(2,\Rbb)$ on the space of degree $d-1$ homogeneous real polynomials in two variables gives a homomorphism $\iota_d\colon \PSL(2,\Rbb)\to\PSL(d,\Rbb)$. Post-composing representations in the Teichm\"uller space with $\iota_d$ singles out a connected component
\[
(\iota_d)_*(\mathcal T(S))\subset \mathrm{Hit}_d(\Rbb)\subset \mathrm{Hom}(\pi_1(S),\PSL(d,\Rbb))/\!\!/\PSL(d,\Rbb).
\]
This \emph{Hitchin component} $\mathrm{Hit}_d(S)$ was identified and studied by Hitchin \cite{Hit} who proved that it is homeomorphic to $\Rbb^{-(d^2-1)\chi(S)}$. Using different methods, Fock and Goncharov \cite{FG1} and Labourie \cite{Lab} generalized many classical features of $\mathcal T(S)$ to the context of Hitchin representations. 

Much work has been done to describe generalized versions of Thurston's compactification for Hitchin components and related spaces \cite{Ale, BIPP, BP, CDLT, FG1, Le1, Le2, Par2, Par4}. 

The classical approach suggests the study of the \emph{(vector valued) length spectrum}
\[
L_d(\rho):=
(\log\lambda^\rho_1(\gamma), \log\lambda^\rho_2(\gamma),\dots, \log\lambda^\rho_d(\gamma))_{\gamma\in\pi(S)}.
\]
Here, $\lambda^\rho_i(\gamma)$ denotes the absolute value of the eigenvalues of $\rho(\gamma)$, which are non-zero and distinct \cite{FG1, Lab}. Usually, one also assumes that $\lambda^\rho_i(\gamma)> \lambda^\rho_{i+1}(\gamma)$. Parreau \cite{Par2} showed that the projectivized image of $L_d$ is relatively compact and that the boundary points of the closure can be realized as projectivized length spectra of isometric actions of $\pi_1(S)$ on an $\Rbb$-Euclidean building $\mathcal B_d$ of rank $d-1$. 

Euclidean buildings were introduced by Bruhat and Tits \cite{BT}. They are metric spaces equipped with an action of a reductive algebraic group over a field with discrete valuation. An $\Rbb$-Euclidean building is a generalization of an Euclidean building where the field is allowed to have a non-discrete valuation. 

For this introduction, it suffices to think of $\mathcal B_d$ as a generalization of an $\Rbb$-tree. It is a metric space obtained by gluing parametrized copies of the affine space
\[
\mathbb A^{d-1}:=\{(x_1,x_2,\dots,x_d)\in\Rbb^d\colon x_1+x_2+\dots+x_d=1\}
\]
called \emph{apartments}. Any two such parametrizations differ by an element of the \emph{affine Weyl group} $W_{\text{aff}}$, namely by the composition of a permutation of the coordinates and a translation by a vector in the underlying vector space which is naturally identified with 
\[
\mathbb V^{d-1}:=\{(x_1,x_2,\dots,x_d)\in\Rbb^d\colon x_1+x_2+\dots+x_d=0\}.
\]

The $\Rbb$-Euclidean building $\Bcal_d$ is associated to the general linear group over a specific field $\Fbb$ with valuation $v$. Each element of $\Fbb$ is an equivalence class of sequences of real numbers. The non-discrete valuation $v$ encodes information about the asymptotic behavior of such a sequence. Parreau \cite{Par1} described an explicit model for $\Bcal_d$ in which apartments correspond to line decompositions of a fixed $d$-dimensional $\Fbb$-vector space $V$. 

Our main contribution is to combine this explicit model and the positivity properties of Hitchin representations to describe the geometry of a preferred collection of apartments in the building $\mathcal B_d$. 

Let us be more explicit. Consider the universal cover of $\widetilde S$ and its boundary $\partial\widetilde S$. The choice of an auxiliary hyperbolic metric on $S$, identifies $\widetilde S$ to the hyperbolic plane and $\partial\widetilde S$ with the unit circle. For any $\rho\in\mathrm{Hit}_d(S)$, there exists a (unique up to $\PGL(d,\Rbb)$-action) $\rho$-equivariant map $\xi_\rho$ from $\partial\widetilde S$ into the space of complete flags in $\Rbb^d$ \cite{FG1, Lab}. 

This \emph{flag map} can be used to extend Thurston's parametrization of Teichm\"uller space via shearing coordinates to Hitchin components \cite{Thu2, Bon1, BoDr,BoDr2,FG1}. The idea is to fix a certain topological data on $S$ that singles out preferred tuples of distinct points in $\partial \widetilde S$. Using the flag map $\xi_\rho$, one then wishes to parametrize the space of tuples of flags in $\Rbb^d$ considered up to the action of $\PGL(d,\Rbb)$. Such a $\PGL(d,\Rbb)$ orbit is called a \emph{configurations of $t$ flags}. We will restrict our attention to tuples of flags that have the \emph{maximum span property} as in Definition \ref{def:maxspan}, which is a strong genericity condition.

It turns out that it is enough to consider configurations of three and four flags, which can be parametrized by two families of real numbers. Any orbit of four flags $(E,F,G,H)$ that have the maximum span property has associated \emph{triple ratios} $X_{a,b,c}(E,F,G)$ and $X_{a,b,c}(E,G,H)$ and \emph{double ratios} $Z_i(E,F,G,H)$, where $a,b,c\geq 1$ are integers such that $a+b+c=d$, and $i=1,2, \dots, d-1$.

Fock and Goncharov show that for any $\rho\in\mathrm{Hit}_d(S)$, the images of tuples of distinct points via $\xi_\rho$ are \emph{positive} in the following sense. For any three distinct points $x_1,x_2,x_3\in\partial \widetilde S$, the triple ratios of $(\xi_\rho(x_1),\xi_\rho(x_2),\xi_\rho(x_3))$ are positive. Moreover, for any four distinct points $x_1,x_2,x_3,x_4$ in this cyclic order along $\partial \widetilde S$, the double ratios $Z_i(\xi_\rho(x_1),\xi_\rho(x_2),\xi_\rho(x_3),\xi_\rho(x_4))$ are positive. 

In this paper we use this positivity property of Hitchin representations to describe intersections of apartments in the $\Rbb$-Euclidean building $\mathcal B_d$ arising as limits of positive tuple of flags in $\Rbb^d$. 

Consider a sequence of positive tuples of flags $(F_{1,n},F_{2,n},\dots, F_{t,n})$ in $\Rbb^d$. It follows from Lemma \ref{lem:ulimflag} and from the definition of the field $\Fbb$ that there exists a unique limiting tuple of flags $(F_1, F_2,\dots, F_t)$ in $\Fbb^d$ that we call the \emph{ultralimit of $(F_{1,n},F_{2,n},\dots, F_{t,n})$}. This tuple of flags $(F_1, F_2,\dots, F_t)$ in $\Fbb^d$ is \emph{positive} if it has the maximum span property and if the sequences of Fock-Goncharov parameters of the tuple $(F_{1,n},F_{2,n},\dots, F_{t,n})$ define non-zero elements in the field $\Fbb$. The genericity condition guarantees that any two such flags $F_i$ and $F_j$ in $\Fbb^d$ determine an apartment in the $\Rbb$-Euclidean building $\mathcal B_d$.

\begin{thm}\label{intro:triple} Let $(E_n,F_n,G_n)$ be a sequence of positive triples of flags in $\Rbb^d$. Assume that its ultralimit $(E,F,G)$ is positive. Denote by $\mathcal A_{EF}$, $\mathcal A_{FG}$, and $\mathcal A_{EG}$ the apartments in $\mathcal B_d$ corresponding to the pairs $(E,F)$, $(F,G)$ and $(E,G)$, respectively. Then, there exists 
\begin{itemize}
	\item[-] a preferred parametrization $f_{EG}\colon\Abb^{d-1}\to\mathcal A_{EG}$,
	\item[-] two closed cones $\mathfrak C_1$ and $\mathfrak C_2$ in $\Abb^{d-1}$, defined by the inequalities \ref{eqn:mainthm1} and \ref{eqn:mainthm2},
\end{itemize}
such that 
\begin{gather*}
\mathfrak C_1=f_{EG}^{-1}(\mathcal A_{EG}\cap\mathcal A_{EF})\text{ and }\ \mathfrak C_2=f_{EG}^{-1}(\mathcal A_{EG}\cap\mathcal A_{FG}).
\end{gather*}
The cones $\mathfrak C_1$ and $\mathfrak C_2$ are described explicitly in terms of the valuations of the sequences of triple ratios $(X_{a,b,c}(E_n,F_n,G_n))$.
\end{thm}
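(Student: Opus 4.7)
The plan is to work in Parreau's explicit model of $\mathcal B_d$, in which points are equivalence classes of non-archimedean norms on the $\Fbb$-vector space $V$ and the apartment associated to a pair of transverse flags is the locus of norms for which the corresponding line decomposition is orthogonal. First I would choose bases $v=(v_1,\dots,v_d)$, $w=(w_1,\dots,w_d)$, and $u=(u_1,\dots,u_d)$ of $V$ adapted to the pairs $(E,G)$, $(E,F)$, and $(F,G)$ respectively, so that $v_i\in E^{(i)}\cap G^{(d-i+1)}$, $w_i\in E^{(i)}\cap F^{(d-i+1)}$, and $u_i\in F^{(i)}\cap G^{(d-i+1)}$. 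The preferred parametrization $f_{EG}\colon\Abb^{d-1}\to\mathcal A_{EG}$ then sends $\mathbf x=(x_1,\dots,x_d)$ to the class of the max-norm $\|\sum a_i v_i\|_{\mathbf x}:=\max_i |a_i|_v e^{-x_i}$.

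Because the flag $E$ is shared between the pairs $(E,G)$ and $(E,F)$, the change-of-basis matrix $M$ expressing $w$ in terms of $v$ is triangular with unit diagonal after a harmless normalization of $w$; similarly the matrix $M'$ expressing $u$ in terms of $v$ is triangular of opposite shape, since $G$ is common to $(E,G)$ and $(F,G)$. A standard non-archimedean computation then shows that $f_{EG}(\mathbf x)\in\mathcal A_{EF}$ if and only if the norm $\|\cdot\|_{\mathbf x}$ is also a max-norm in the basis $w$, which in view of the triangularity of $M$ is equivalent to the one-sided linear system
\[
x_j-x_i\le v(M_{ij})
\]
ranging over the off-diagonal indices $(i,j)$, and an analogous system involving $M'$ characterizes $\mathcal A_{EG}\cap\mathcal A_{FG}$. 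Each system cuts out a closed polyhedral cone in $\Abb^{d-1}$; by construction these are the candidate cones $\mathfrak C_1$ and $\mathfrak C_2$.

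The remaining and main step is to express each valuation $v(M_{ij})$ and $v(M'_{ij})$ as an explicit piecewise-linear function of the valuations $v(X_{a,b,c}(E_n,F_n,G_n))$ of the triple ratios. For this I would invoke the Fock--Goncharov factorization writing a change of basis between two flag-adapted bases as a product of elementary unipotent matrices indexed by admissible triples $(a,b,c)$ with $a+b+c=d$, each elementary factor carrying exactly one triple ratio. Expanding this product and taking the ultralimit writes each entry of $M$ as an $\Fbb$-sum of monomials in triple ratios; since the valuation of a non-archimedean sum equals the minimum of the valuations of its terms whenever that minimum is uniquely attained, each $v(M_{ij})$ becomes a minimum of $\Zbb$-linear expressions in the $v(X_{a,b,c})$. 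Positivity of the ultralimit $(E,F,G)$ guarantees that no triple ratio vanishes in $\Fbb$, so the relevant minima are finite. The main obstacle is the combinatorial bookkeeping required to identify which monomial dominates in each entry and, simultaneously, which among the triangular family of inequalities above are not redundant; carrying this out produces the sharp description of $\mathfrak C_1$ and $\mathfrak C_2$ in terms of the valuations of triple ratios claimed in the theorem.
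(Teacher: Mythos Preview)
Your outline is in the right spirit and shares its starting point with the paper: pick bases adapted to the three pairs of flags, observe that the transition matrices are triangular, and read off the intersection as a polyhedral cone in the apartment chart. But there is a genuine gap at the technical core.

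The first issue is your valuation step. You write that $v(M_{ij})$ equals the minimum of the valuations of the monomial summands ``whenever that minimum is uniquely attained,'' and then move on. You never argue that the minimum \emph{is} uniquely attained, and in fact it need not be. What actually saves the computation is that every monomial lies in $\Fbb_{\ge 0}$, and for nonnegative elements one always has $v(x+y)=\min\{v(x),v(y)\}$ with no uniqueness hypothesis (Lemma~\ref{lem:posvaluation} in the paper). You use positivity only to ensure the minima are finite, but the deeper role of positivity is to rule out cancellation in these sums; without that, your plan to compute $v(M_{ij})$ entry by entry does not go through.

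The second and more structural issue is that you propose to compute \emph{all} the $v(M_{ij})$ and then decide, by unspecified ``combinatorial bookkeeping,'' which of the $\binom{d}{2}$ inequalities are redundant. The paper's key contribution is precisely to avoid this: total nonnegativity of the transition matrix (which is built into the snake basis construction, via the sign choices in \S\ref{ssec:snakes}) forces the full system to collapse to the $d-1$ inequalities involving only consecutive indices (Proposition~\ref{prop:positiveintersection} and Corollary~\ref{cor:uppertriangular}). Once one knows that only the near-diagonal entries $m_{i,i+1}$ matter, their valuations can be tracked inductively through the sequence of tail and diamond moves that carries $\sigma^{\mathrm{bot}}$ to $\sigma^{\mathrm{top}}$ (Lemmas~\ref{lem:asdiamond} and~\ref{lem:astail}), and the explicit formulas \eqref{eqn:mainthm1}--\eqref{eqn:mainthm2} fall out by an induction on $d$ (Theorem~\ref{thm:topbot}). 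Your approach could in principle be pushed through, but without the total nonnegativity reduction the bookkeeping you defer is the entire difficulty; the paper's snake-move induction is what makes the computation tractable.
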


In the statement of Theorem \ref{intro:triple}, one can permute the three flags $E$, $F$ and $G$ to obtain similar descriptions of the intersections of the apartments $\mathcal A_{EF}$, $\mathcal A_{FG}$, and $\mathcal A_{EG}$ in terms of preferred parametrizations of the apartment $\Acal_{FG}$ or of the apartment $\Acal_{EG}$.

In particular, applying Theorem \ref{intro:triple} to the sequences of positive triple of flags $(E_n,F_n,G_n)$ and $(E_n,G_n,H_n)$, we obtain two parametrization $f_{EG}$ and $f'_{EG}$ for the apartment $\Acal_{EG}$. As observed above, it is a consequence of the definition of an $\Rbb$-Euclidean building that these two parametrizations differ by an element $w_{(E,F,G,H)}$ of the affine Weyl group $W_{\text{aff}}$.

\begin{thm}\label{intro:quadruple} Consider a sequence $(E_n,F_n,G_n,H_n)$ of positive quadruples of flags in $\Rbb^d$. Assume that its ultralimit $(E,F,G,H)$ is positive. Denote by $f_{EG}$ and $f'_{EG}$ the preferred parametrizations of the apartment $\mathcal A_{EG}$ obtained by applying Theorem \ref{intro:triple} to the sequences of positive triples of flags $(E_n,F_n,G_n)$ and $(E_n,G_n, H_n)$. Then, the element
\[
w_{(E,F,G,H)}:=f_{EG}^{-1}\circ f'_{EG}\in W_{\text{aff}}
\]
of the affine Weyl group $W_{\text{aff}}$ is the translation of $\Abb^{d-1}$ by the unique vector $(x_1,x_2,\dots,x_d)$ in $\mathbb V^{d-1}$ such that the difference $x_{i+1}-x_i$ is the valuation of the element in $\Fbb$ defined by the sequence of double ratios $Z_{d-i}(E_n,F_n,G_n,H_n)$. 
\end{thm}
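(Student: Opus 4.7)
The plan is to observe that both parametrizations $f_{EG}$ and $f'_{EG}$ of $\mathcal A_{EG}$ arise from the same line decomposition of $V$ determined by the pair $(E,G)$, and that switching the third (``pinning'') flag from $F$ to $H$ merely rescales the preferred basis generators. Since a diagonal rescaling of generators by $(c_1,\dots,c_d)\in(\Fbb^\times)^d$ translates the corresponding canonical apartment parametrization by $(v(c_1),\dots,v(c_d))$ (projected to the sum-zero hyperplane $\mathbb V^{d-1}$), the element $w_{(E,F,G,H)}=f_{EG}^{-1}\circ f'_{EG}$ automatically has no permutation part, and the problem reduces to identifying the ratios $c_{i+1}/c_i$ with the double ratios $Z_{d-i}(E,F,G,H)$.

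First, I would recall from the construction underlying Theorem~\ref{intro:triple} that the apartment $\mathcal A_{EG}$ corresponds to the line decomposition $V=L_1\oplus\dots\oplus L_d$ with $L_i=E^{(i)}\cap G^{(d-i+1)}$. Any choice of generators $e_i\in L_i$ yields a canonical parametrization $\Abb^{d-1}\to\mathcal A_{EG}$, and the preferred parametrization $f_{EG}$ coming from the triple $(E,F,G)$ corresponds to the Fock--Goncharov projective basis singled out by the pinning flag $F$, unique up to common rescaling of the $e_i$. Applying the analogous construction---obtained by permuting the flags in Theorem~\ref{intro:triple} so that $E$ and $G$ remain the outer flags while $H$ plays the role of the pinning flag---to the triple $(E,G,H)$ yields $f'_{EG}$ from a second basis $(e'_1,\dots,e'_d)$ with $e'_i=c_ie_i$ for unique $c_i\in\Fbb^\times$ (up to common rescaling). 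It follows that $w_{(E,F,G,H)}$ is the translation by $(v(c_1),\dots,v(c_d))\in\mathbb V^{d-1}$.

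Second, I would compute the ratios $c_{i+1}/c_i$ as projective invariants of $(E,F,G,H)$. Projecting the four flags to the two-dimensional quotient $V/(E^{(i-1)}+G^{(d-i-1)})$ produces four lines in general position---the images of $L_i$, $L_{i+1}$, $F^{(1)}$, and $H^{(1)}$---whose cross-ratio can be read off directly from the snake-basis expansions of $F^{(1)}$ and $H^{(1)}$ as $c_{i+1}/c_i$. By the Fock--Goncharov definition, this cross-ratio equals $Z_{d-i}(E,F,G,H)$; the index reversal $i\mapsto d-i$ reflects the fact that the two snake bases associated to $(E,F,G)$ and $(E,G,H)$ traverse the chain $L_1,\dots,L_d$ in opposite directions. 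Taking valuations gives $x_{i+1}-x_i=v(c_{i+1}/c_i)=v(Z_{d-i}(E,F,G,H))$, and together with $\sum x_i=0$ this uniquely determines $(x_1,\dots,x_d)\in\mathbb V^{d-1}$.

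The main obstacle is the second step: explicitly identifying the cross-ratio with the double ratio $Z_{d-i}$ and tracking the correct index shift and sign conventions between the two snake bases. Positivity of the ultralimit $(E,F,G,H)$ is what guarantees that the relevant triple and double ratios define nonzero elements of $\Fbb$, so that the snake bases exist and the translation vector lies in $\mathbb V^{d-1}$ with real-valued coordinates.
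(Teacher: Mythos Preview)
Your approach is essentially the one the paper takes. The paper's proof (Theorem~5.8) proceeds exactly as you outline: the two markings $f_{EG}$ and $f'_{EG}$ are the standard markings attached to two bases $(u_i)$ and $(U_i)$ of the \emph{same} line decomposition $L_i=(E^{(d-i)}\oplus G^{(i-1)})^\perp$, so the transition element of $W_{\mathrm{aff}}$ is a pure translation determined by the diagonal rescaling $U_i=c_i u_i$; Remark~3.2 converts the valuations $v(c_i)$ into the translation vector, and the explicit formula $U_i=Z_1\cdots Z_{d-i}\,u_i$ from Proposition~2.10 identifies the $c_i$. Your second step---projecting to the two-dimensional quotient $V/(E^{(i-1)}+G^{(d-i-1)})$ and reading $c_{i+1}/c_i$ as a cross-ratio---is just a geometric rephrasing of that same computation; Proposition~2.10 is precisely this calculation carried out in explicit coordinates after normalizing $(E,F,G)$.

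One point to correct: your explanation of the index reversal is not how it actually arises. In the paper the second basis $(U_i)$ is the \emph{bottom} snake basis for the triple $(E,H,G)$ (not $(E,G,H)$), so both bases traverse the lines $L_1,\dots,L_d$ in the \emph{same} direction; the appearance of $Z_{d-i}$ rather than $Z_i$ comes directly from the definition of the double ratio and the formula $c_i=Z_1\cdots Z_{d-i}$. If you instead produce $(U_i)$ from the triple $(E,G,H)$ with the roles of $G$ and $H$ swapped as outer/pinning flags, you get a different labeling and must track an extra permutation---this is the source of the ambiguity you flagged. It is cleaner to follow the paper and take both bases as bottom snakes with $E$ and $G$ as the outer pair, using $F$ and $H$ respectively as the pinning flag; then no permutation enters and the translation formula drops out of Proposition~2.10 immediately.
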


Theorem \ref{intro:triple} and Theorem \ref{intro:quadruple} are well-known for $d=2$ and in the case of $d=3$ they follow from \cite[Thm. 1]{Par3} and \cite[Prop. 4.5]{Par4}, respectively. However, we emphasize that our results are obtained via a different approach that only relies on the positivity properties of the sequences of flags. 

An immediate consequence of our explicit formulas is that the triple intersection $\mathcal A_{EF}\cap \mathcal A_{FG}\cap\mathcal A_{EG}$ is at most one point and it is exactly one point if the valuations of all the sequences of triple ratios are zero.

Our next result concerns the geometry of apartments in the $\Rbb$-Euclidean building $\Bcal_d$ associated to the ultralimit of a sequence of positive tuples of flags for $t\geq 4$. Consider a sequence $(F_{1,n}, F_{2,n},\dots, F_{t,n})$ of positive tuples of flags in $\Rbb^d$, and assume that the ultralimit $(F_1,F_2,\dots, F_t)$ is positive. Then, there exists an apartment $\Acal_{ij}$ in $\Bcal_d$ associated to each pair of flags $(F_i,F_j)$. We say that such an apartment $\Acal_{i_2,j_2}$ \emph{combinatorially separates} the apartments $\Acal_{i_1j_1}$ and $\Acal_{i_3j_3}$ if, up to a cyclic permutation of the indices of the flags $(F_1,F_2,\dots, F_t)$, we have that
\[
1\leq i_1 \leq i_2 \leq i_3< j_3\leq j_2\leq j_1\leq t.
\]
\begin{thm}[Monotonicity]\label{intro:mono} Let $(F_{1,n}, F_{2,n},\dots, F_{t,n})$ be a sequence of positive tuples of flags in $\Rbb^d$ with positive ultralimit the tuple of flags $(F_1,F_2 ,\dots, F_t)$ in $\Fbb^d$. Let $\Acal_1$, $\Acal_2$ and $\Acal_3$ be apartments in the $\Rbb$-Euclidean building $\Bcal_d$ corresponding to a pairs of flags $(F_{i_1},F_{j_1})$, $(F_{i_2},F_{j_2})$ and $(F_{i_3},F_{j_3})$, respectively. If the apartment $\Acal_2$ combinatorially separates the apartments $\Acal_1$ and $\Acal_3$, then
\[
\Acal_1\cap\Acal_3=\Acal_1\cap\Acal_2\cap\Acal_3.
\]
\end{thm}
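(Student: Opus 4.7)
The trivial inclusion $\Acal_1 \cap \Acal_2 \cap \Acal_3 \subseteq \Acal_1 \cap \Acal_3$ leaves us needing the reverse, $\Acal_1 \cap \Acal_3 \subseteq \Acal_2$. After a cyclic rotation of the tuple (which leaves the building, its apartments, and the positivity condition unchanged), we may assume $1 \leq i_1 \leq i_2 \leq i_3 < j_3 \leq j_2 \leq j_1 \leq t$; pick an arbitrary $x \in \Acal_{i_1 j_1} \cap \Acal_{i_3 j_3}$.

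The plan is to deduce the statement from the following one-endpoint-move lemma: for a positive tuple of flags in $\Fbb^d$ and indices $i \leq a \leq k < l \leq j$, one has $\Acal_{ij} \cap \Acal_{kl} \subseteq \Acal_{aj}$, and symmetrically $\Acal_{ij} \cap \Acal_{kl} \subseteq \Acal_{ib}$ whenever $i \leq k < l \leq b \leq j$. Two applications then finish the theorem: first with $(i,a,k,l,j) = (i_1, i_2, i_3, j_3, j_1)$ to obtain $x \in \Acal_{i_2 j_1}$; then, applied to the pair $(\Acal_{i_2 j_1}, \Acal_{i_3 j_3})$ with $(i,k,l,b,j) = (i_2, i_3, j_3, j_2, j_1)$, to obtain $x \in \Acal_{i_2 j_2} = \Acal_2$.

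To prove the one-endpoint-move lemma, restrict to the positive sub-tuple $(F_i, F_a, F_k, F_l, F_j)$ (positivity is inherited, since the relevant Fock--Goncharov parameters of a sub-tuple form a subset of those of the original), and fix the preferred parametrization $f_{ij}\colon \Abb^{d-1} \to \Acal_{ij}$ provided by Theorem \ref{intro:triple} applied to the triple $(F_i, F_a, F_j)$. Then $f_{ij}^{-1}(\Acal_{ij} \cap \Acal_{aj})$ is the explicit closed cone $\Cfrak_2$ of Theorem \ref{intro:triple}, cut out by linear inequalities on the valuations $v(X_{p,q,r}(F_i, F_a, F_j))$. It remains to show $f_{ij}^{-1}(\Acal_{ij} \cap \Acal_{kl}) \subseteq \Cfrak_2$. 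To describe the left-hand side, combine Theorem \ref{intro:triple} applied to the triples $(F_i, F_k, F_j)$ and $(F_i, F_l, F_j)$ with Theorem \ref{intro:quadruple} applied to the quadruples $(F_i, F_a, F_k, F_j)$ and $(F_i, F_k, F_l, F_j)$; the positivity of all Fock--Goncharov parameters lets one express each triple ratio $X_{p,q,r}(F_i, F_a, F_j)$ as a rational function of the triple and double ratios of the sub-configuration with positive numerator and denominator, so that passing to valuations converts the identity into linear inequalities that strengthen those defining $\Cfrak_2$.

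The main obstacle is this comparison step: writing $X_{p,q,r}(F_i, F_a, F_j)$ as a positive rational combination of triple and double ratios of the five-flag configuration is the higher-$d$ analogue of the additivity of signed distance that governs the tree case $d=2$, and its precise form must be derived directly from the definition of the triple ratios. Once that identity is in hand, positivity together with the non-Archimedean inequality $v(x+y) \geq \min(v(x), v(y))$ on $\Fbb$ immediately yields the containment $f_{ij}^{-1}(\Acal_{ij} \cap \Acal_{kl}) \subseteq \Cfrak_2$, completing the lemma and hence the theorem.
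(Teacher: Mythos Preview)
Your reduction to the ``one-endpoint-move lemma'' is sound and mirrors the general-case reduction in the paper. However, the proof of the lemma itself has a genuine gap, which you in fact flag yourself. You propose to describe $f_{ij}^{-1}(\Acal_{ij}\cap\Acal_{kl})$ by combining Theorem~\ref{intro:triple} for the triples $(F_i,F_k,F_j)$ and $(F_i,F_l,F_j)$ with Theorem~\ref{intro:quadruple}, but those results only give you $\Acal_{ij}\cap\Acal_{ik}$, $\Acal_{ij}\cap\Acal_{kj}$, $\Acal_{ij}\cap\Acal_{il}$, $\Acal_{ij}\cap\Acal_{lj}$ together with the translations relating the various markings; none of this yields a description of $\Acal_{ij}\cap\Acal_{kl}$ when $\{k,l\}\cap\{i,j\}=\emptyset$. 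Getting from those four intersections to $\Acal_{ij}\cap\Acal_{kl}$ is precisely a monotonicity statement of the kind you are trying to prove. Furthermore, the ``positive rational expression for $X_{p,q,r}(F_i,F_a,F_j)$'' that you need is left unproved; for general $d$ this amounts to the subtraction-free coordinate-change formulas of Fock--Goncharov, and even granting those, turning them into the sharp inequalities needed for the cone inclusion is not automatic.

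The paper avoids all of this by working directly with the totally nonnegative basis-change matrices rather than with the cone descriptions of Theorem~\ref{intro:triple}. The key step is an elementary valuation lemma: if $A$ is upper triangular totally nonnegative, $B$ is triangular totally nonnegative, and $C=AB$ satisfies $v(\det C)=\min_\sigma v(c_{\sigma(1)1}\cdots c_{\sigma(d)d})$, then $v(c_{i,i+1}/c_{i+1,i+1})\leq v(a_{i,i+1}/a_{i+1,i+1})$. This single inequality, combined with Proposition~\ref{prop:positiveintersection}, immediately gives the cone inclusion in the two special cases (same left endpoint; or $i_1=i_2$ and $j_2=j_3$), and the general case follows by chaining these through the auxiliary apartments $\Acal_{i_1j_3}$, $\Acal_{i_1j_2}$, $\Acal_{i_2j_3}$, much as you do. The moral is that the matrix-level argument bypasses the need for any explicit rational identity among Fock--Goncharov invariants.
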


In other words, Theorem \ref{intro:mono} relates combinatorial separation, which is a property depending exclusively on the cyclic order of the tuple of flags $(F_1,F_2,\dots, F_t)$, to intersection properties of the corresponding apartments in the $\Rbb$-Euclidean building $\Bcal_d$.

\medskip\noindent
\textbf{Acknowledgments:} It is a pleasure to thank my thesis advisor, Francis Bonahon, for encouraging me to think about this problem, for the numerous insightful conversations, and for his support. I thank Daniele Alessandrini and Beatrice Pozzetti for useful discussions and feedback. I am very grateful to the referee for providing several useful comments on an earlier version of this manuscript.

%%%%%%%%%%%%%%%%%%%%%%%%%%%%%%%
%%%%%%%%%%%%%%%%%%%%%%%%%%%%%%%
\section{Flags, snakes and positivity}\label{sec:flagsPos}
%%%%%%%%%%%%%%%%%%%%%%%%%%%%%%%
%%%%%%%%%%%%%%%%%%%%%%%%%%%%%%%

%%%%%%%%%%%%%%%%%%%%%%%%%%%%%%%
\subsection{Configurations of $t$ flags and their parametrization}\label{sec:tripledouble}
%%%%%%%%%%%%%%%%%%%%%%%%%%%%%%%

A \emph{(complete) flag} $F$ in $\Rbb^d$ is a nested sequence 
\[
0=F^{(0)}\subset F^{(1)}\subset\dots\subset F^{(d-1)}\subset  F^{(d)}=\Rbb^{d}
\]
of vector subspaces of $\Rbb^d$  such that $\dim F^{(i)}=i$ for all $i$. The quotient $\PGL(d,\Rbb)$ of the general linear group $\mathrm{GL}(d,\Rbb)$ by the subgroup of non-zero scalar matrices acts naturally on the space of flags.

We focus on tuples of flags enjoying the following genericity property.

\begin{definition}\label{def:maxspan} The tuple of flags $(F_1,F_2,\dots, F_t)$ has the \emph{maximum span property} if for any integers $0\leq a_1,a_2,\dots,a_t\leq d$ the following equality holds
\begin{equation}\label{eqn:maxspan}
\dim\left(F_1^{(a_1)}+F_2^{(a_2)}+\dots +F_t^{(a_t)}\right)=\min\{a_1+a_2+\dots+a_t,d\}.
\end{equation}
\end{definition}

Observe that the diagonal action of $\PGL(d,\Rbb)$ on the space of tuples of flags preserves the maximum span property. 

\begin{definition}\label{def:confflags} A  \emph{configuration of $t$ flags} is a tuple of flags with the maximum span property considered up to the diagonal action of $\PGL(d,\Rbb)$. Denote by $\mathscr X_t$ the space of configurations of $t$ flags. 
\end{definition}

It follows from elementary linear algebra that $\Xscr_2$ is a single point. Henceforth, we assume $t>2$. In this case there are several $\PGL(d,\Rbb)$ orbits of maximum span tuples of flags.

Fock and Goncharov \cite{FG1} parametrized preferred subspaces of $\mathscr X_t$. The Fock-Goncharov coordinates are expressed in terms of the wedge products of vectors in $\Rbb^{d}$. It is convenient to fix once and for all an identification $\bigwedge^d\Rbb^d\cong\Rbb$ and to observe the following.

\begin{remark}\label{rmk:maxspan} Let $(F_1,F_2,\dots, F_t)$ be a tuple of flags with the maximum span property and let $a_1,a_2,\dots, a_t\geq 0$ be integers such that $a_1+a_2+\dots +a_t=d$. Choose non-zero elements 
\begin{gather*}
f_j^{(a_j)}\in\bigwedge^{a_j}F_j^{(a_j)}\subset \bigwedge^{a_j}\Rbb^d.
\end{gather*}
The maximum span property guarantees that $f_1^{(a_1)}\wedge f_2^{(a_2)}\wedge \dots\wedge f_t^{(a_t)}$ is different from zero.
\end{remark}

%%%%%%%%%%%%%%%%%%%%%%%%%%%%%%%
\subsubsection{Triple ratios}\label{sssec:tripleratios}
%%%%%%%%%%%%%%%%%%%%%%%%%%%%%%% 

Consider the discrete triangle
\[
\Theta_d:=\{(a,b,c)\in\Zbb^3\colon a+b+c=d,\ a,b,c\geq 0\}
\] 
depicted in Figure \ref{fig:Theta_n} and its interior
\[
\Theta_d^\circ:=\{(a,b,c)\in\Zbb^3\colon a+b+c=d,\ a,b,c> 0\}.
\]

\begin{figure}
\includegraphics[scale=.4]{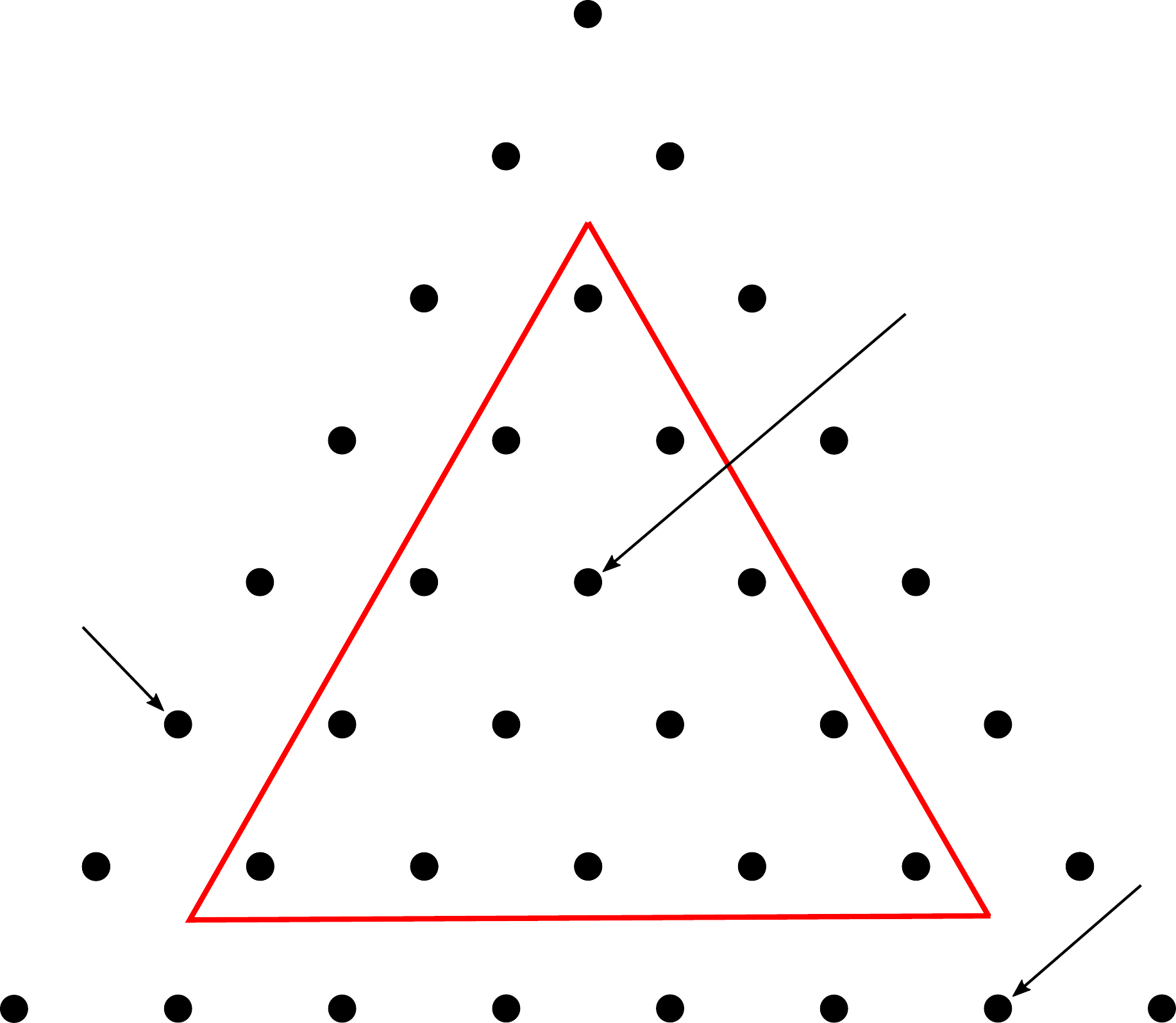}
\put (-530, 74){\makebox[0.7\textwidth][r]{\small{$(5,2,0)$}}}
\put (-367,131){\makebox[0.7\textwidth][r]{\small{$(2,3,2)$}}}
\put (-327, 28){\makebox[0.7\textwidth][r]{\small{$(1,0,6)$}}}
\caption{The discrete triangle $\Theta_d$ for $d=7$. Highlighted, its interior $\Theta_d^\circ$.}
\label{fig:Theta_n}
\end{figure}

\begin{definition}\label{def:tripleratio} Let $(E,F,G)$ be a triple of flags with the maximum span property. For $(a,b,c)\in\Theta_d^\circ$, define the \emph{$(a,b,c)$-triple ratio of $(E,F,G)$} as 
\begin{align*}
X_{a,b,c}(E,F,G):=&\frac{e^{(a-1)}\wedge f^{(b)}\wedge g^{(c+1)}}{e^{(a+1)}\wedge f^{(b)}\wedge g^{(c-1)}}\cdot\frac{e^{(a)}\wedge f^{(b+1)}\wedge g^{(c-1)}}{e^{(a)}\wedge f^{(b-1)}\wedge g^{(c+1)}} \cdot \frac{e^{(a+1)}\wedge f^{(b-1)}\wedge g^{(c)}}{e^{(a-1)}\wedge f^{(b+1)}\wedge g^{(c)}}
\end{align*}
where we chose non-zero vectors $e^{(\cdot)}$, $f^{(\cdot)}$, and $g^{(\cdot)}$ in the exterior powers $\bigwedge^\cdot E^{(\cdot)}$, $\bigwedge^\cdot F^{(\cdot)}$, and $\bigwedge^\cdot G^{(\cdot)}$, respectively.
\end{definition}

The triple ratios do not depend on any of the choices made in the definition and Remark \ref{rmk:maxspan} guarantees that they are non-zero real numbers. The triple ratios are constant on $\PGL(d,\Rbb)$ orbits.

\begin{thm}\label{thm:tripleratios} The map assigning the triple ratios to a configuration of three flags is a bijection between $\Xscr_3$ and $(\Rbb-\{0\})^\frac{(d-1)(d-2)}{2}$.
\end{thm}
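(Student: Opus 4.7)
The plan is to produce a normal form for a triple $(E, F, G)$ with the maximum span property, then exhibit a bijection between the residual parameters and the $(d-1)(d-2)/2$ triple ratios. First I would check the dimension count: the maximum span property forces $E$ and $G$ to be transverse (take $a + c = d$ in Definition \ref{def:maxspan}), so by applying an element of $\PGL(d, \Rbb)$ one can arrange $E^{(a)} = \mathrm{span}(e_1, \dots, e_a)$ and $G^{(c)} = \mathrm{span}(e_{d - c + 1}, \dots, e_d)$ in the standard basis. The stabilizer of the ordered pair $(E, G)$ in $\PGL(d,\Rbb)$ is the image of the diagonal torus, of dimension $d - 1$. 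Since the flag variety has dimension $d(d - 1)/2$, the moduli of $F$ modulo the residual torus has dimension $d(d-1)/2 - (d-1) = (d-1)(d-2)/2 = |\Theta_d^\circ|$, matching the expected count.

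For injectivity, I would implement the \emph{snake technique} hinted at in the title of Section \ref{sec:flagsPos}. A snake is a specific lattice path in $\Theta_d$; to each snake one attaches a projective basis of $\Rbb^d$ whose lines are determined by $(E, F, G)$ via intersections of subspaces of the form $E^{(a)} \cap (F^{(b)} + G^{(c)})$, or the analogous expressions involving other permutations of $E$, $F$, $G$. Two adjacent snakes differ by an elementary move, and the effect of such a move on the associated projective basis is a multiplication by an explicit monomial in a single triple ratio $X_{a, b, c}$. Iterating snake moves realizes the transition between any two snake bases as a product of these elementary changes, so the collection of triple ratios determines (and is determined by) the way $F$ sits in the frame fixed by $(E, G)$, up to the residual torus action.

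For surjectivity, given any assignment of non-zero real numbers to $\Theta_d^\circ$, fix $E$ and $G$ as above and build $F$ by starting from one distinguished snake basis and using the prescribed numbers to define the successive snake bases via the elementary moves just described. A direct computation of the wedge products in Definition \ref{def:tripleratio} then verifies that the triple ratios of the resulting triple $(E, F, G)$ reproduce the prescribed values, and the maximum span property can be checked snake by snake.

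The main obstacle is the snake-move calculation: one needs to show that moving across an elementary triangle in $\Theta_d$ with interior vertex $(a, b, c)$ multiplies a chosen basis vector by precisely the triple ratio $X_{a, b, c}$, up to a controlled monomial factor. This rests on Pl\"ucker-type identities among the wedge products defining the triple ratios, and once it is established, both injectivity and surjectivity follow uniformly from the combinatorics of snakes. The dimension match from the first step then guarantees that no further compatibility conditions are hidden in the triple ratios, so they freely exhaust $(\Rbb - \{0\})^{(d-1)(d-2)/2}$.
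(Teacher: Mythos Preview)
The paper does not prove this statement itself; its proof consists solely of the citation ``Cf.\ \cite[\S 9]{FG1}.'' Your sketch is essentially a reconstruction of the Fock--Goncharov argument from that reference, and the overall strategy---normalize $(E,G)$ to the standard pair of opposite flags, then parametrize the position of $F$ via snake bases whose elementary transitions are governed by single triple ratios---is correct and is precisely what the cited source does.

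Two small technical inaccuracies are worth correcting, since the paper relies on these conventions later. First, snakes live in the \emph{dual} triangle $\Theta_d^\perp=\Theta_{d-1}$, not in $\Theta_d$ (see \S\ref{sssec:snakes}). Second, the snake basis is a projective basis of the dual space $(\Rbb^d)^*$, with lines $L_k=(E^{(\alpha_k)}+F^{(\beta_k)}+G^{(\gamma_k)})^\perp$, rather than a basis of $\Rbb^d$ built from intersections as you wrote. These are dual descriptions of the same construction and do not affect the correctness of your outline. The ``main obstacle'' you identify---that an elementary snake move rescales a single basis vector by exactly one triple ratio $X_{a,b,c}$---is the content of Proposition~\ref{prop:snakemoves}, which the paper also attributes to \cite[\S 9]{FG1} and to \cite[\S A.4]{GMN}.
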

\begin{proof} Cf. \cite[\S 9]{FG1}.
\end{proof}

\begin{remark}\label{rmk:symtriple} If we permute the flags $E$, $F$ and $G$, the triple ratios vary according to the formulas
\[
X_{a,b,c}(E,F,G)=X_{b,c,a}(F,G,E)=X_{b,a,c}(F,E,G)^{-1}.
\]
\end{remark}

%%%%%%%%%%%%%%%%%%%%%%%%%%%%%%%
\subsubsection{Double ratios}\label{sssec:doubleratios}
%%%%%%%%%%%%%%%%%%%%%%%%%%%%%%%

In the case of four flags, one needs to consider a generalized version of the classical cross ratio of four points on a projective line.

\begin{definition}\label{def:doubleratios} Let $(E,F,G,H)$ be a quadruple of flags with the maximum span property. For $0< i< d$, the \emph{$i$-double ratio} is
\[
Z_i(E,F,G,H)=-\frac{e^{(i)}\wedge g^{(d-i-1)}\wedge f^{(1)}}{e^{(i)}\wedge g^{(d-i-1)}\wedge h^{(1)}}\cdot\frac{e^{(i-1)}\wedge g^{(d-i)}\wedge h^{(1)}}{e^{(i-1)}\wedge g^{(d-i)}\wedge f^{(1)}}.
\]
where we chose non-zero vectors $e^{(\cdot)}$, $f^{(1)}$, $g^{(\cdot)}$, and $h^{(1)}$ in $\bigwedge^\cdot E^{(\cdot)}$, $F^{(1)}$, $\bigwedge^\cdot G^{(\cdot)}$, and $H^{(1)}$ respectively.
\end{definition}

Note that the double ratios do not depend on the choices involved in the definition and Remark \ref{rmk:maxspan} implies that they are non-zero real numbers. The double ratios are constant on the $\PGL(d,\Rbb)$ orbit of $(E,F,G,H)$. 

\begin{remark}\label{rmk:symdouble} The r\^oles of the flags $(E,G)$ and $(F,H)$ in the definition of the double ratios are not equal. Explicit computations show that if we consider permutations of $E$, $F$, $G$ and $H$ that respect this lack of symmetry (called \emph{dihedral permutations}) the corresponding double ratios are related to the original ones by the formulas
\[
Z_i(E,F, G,H)=Z_{d-i}(G,H,E,F)=Z_i(E,H,G,F)^{-1}.
\]
\end{remark}

\begin{thm}\label{thm:doubleatios} The configuration of four flags $(E,F,G,H)\in\Xscr_4$ is determined by the data of
\begin{itemize}
	\item[-] the triple ratios $X_{a,b,c}(E,F,G)$ for $(a,b,c)\in\Theta^\circ_d$;
	\item[-] the triple ratios $X_{a,b,c}(E,H,F)$ for $(a,b,c)\in\Theta^\circ_d$;
	\item[-] the double ratios $Z_i(E,F,G,H)$ for $0< i< d$.
\end{itemize}
\end{thm}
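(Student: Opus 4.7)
The strategy is to use the $\PGL(d,\Rbb)$ action on $\Xscr_4$ to normalize the quadruple step by step, reducing the problem to a computation inside a single torus orbit. First, by Theorem \ref{thm:tripleratios} the triple ratios $X_{a,b,c}(E,F,G)$ determine the configuration $(E,F,G)\in\Xscr_3$. Consequently, given two quadruples sharing all the invariants listed in the statement, one may $\PGL$-translate them so that their respective first three flags coincide. It then suffices to prove that the fourth flag $H$ is uniquely determined by the remaining data.

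Second, Theorem \ref{thm:tripleratios} applied to the triple $(E,H,F)$ shows that the invariants $X_{a,b,c}(E,H,F)$ determine this configuration up to $\PGL(d,\Rbb)$. Any group element that fixes the pair $(E,F)$ belongs to $T:=\mathrm{Stab}_{\PGL(d,\Rbb)}(E,F)$, which is a maximal torus of dimension $d-1$ thanks to the maximum span property. Hence any two candidate flags $H,H'$ compatible with the already-fixed $(E,F)$ and the prescribed triple ratios $X_{a,b,c}(E,H,F)$ must satisfy $H'=tH$ for some $t\in T$.

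Third, I would show that the $d-1$ double ratios $Z_i(E,F,G,H)$ separate the points of the $T$-orbit of $H$. Inspection of Definition \ref{def:doubleratios} reveals that $H$ enters the formula only through its first line $H^{(1)}$. Choose coordinates in which $E$ is the standard flag, $F$ is the opposite flag, and $h^{(1)}=\sum_j h_j e_j$ has all non-zero entries (by the maximum span property); then $T$ is the diagonal torus and rescales the coordinates $h_j$ componentwise. Expanding the wedge products in Definition \ref{def:doubleratios} and noting that the factors involving $G$ appear symmetrically in the numerator and the denominator of $Z_i$, one exhibits the $(d-1)$-tuple $(Z_1,\ldots,Z_{d-1})$ as an invertible monomial change of variables on the $T$-orbit of $H^{(1)}$ inside $\mathbb{P}(\Rbb^d)$. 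This forces $tH^{(1)}=H^{(1)}$; the maximum span property then implies that the residual stabilizer of $(E,F,G,H)$ inside $T$ is trivial, so $tH=H$ and hence $H=H'$.

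The main obstacle is this last step, namely the explicit computation showing that the $G$-dependent terms factor out cleanly and produce a bijective monomial map. An alternative route avoids the explicit formula: the double-ratio map from the $T$-orbit of $H^{(1)}$ (a $(d-1)$-dimensional variety) to $(\Rbb-\{0\})^{d-1}$ is rational between varieties of equal dimension, so verifying dominance at one test configuration (e.g.\ a generic Frenet-type flag) establishes generic finiteness, and positivity together with the genericity built into the maximum span property then upgrades this to a bijection on the relevant open locus.
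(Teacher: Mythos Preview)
The paper gives no self-contained argument here; it only cites \cite[\S5 and \S9]{FG1}. Your three-step outline --- fix $(E,F,G)$ using Theorem~\ref{thm:tripleratios}, reduce $H$ to a $T=\mathrm{Stab}(E,F)$-orbit using the triple ratios of $(E,H,F)$, then eliminate the residual torus freedom with the double ratios --- is the correct strategy and is essentially what underlies the Fock--Goncharov parametrization.

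The gap is in your execution of the third step. In the $(E,F)$-adapted basis, the functions $t\mapsto Z_i(E,F,G,tH)$ are \emph{not} monomials in the diagonal entries of $t$: the linear form $h\mapsto e^{(i)}\wedge g^{(d-i-1)}\wedge h$ is not a coordinate form in that basis, because $G$ sits in general position relative to it. Your observation that ``the factors involving $G$ appear symmetrically'' only records that $Z_i$ is independent of the choice of representatives $g^{(k)}\in\bigwedge^k G^{(k)}$; it says nothing about the $T$-action. The alternative dimension-count route yields at best generic finiteness, not injectivity on the full maximum-span locus, and the appeal to ``positivity'' to upgrade this is not a proof.

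The repair is to normalize the pair $(E,G)$ rather than $(E,F)$ to the standard pair of opposite flags, exactly as in the proof of Proposition~\ref{prop:shearing} of this paper. Then $e^{(i-1)}\wedge g^{(d-i)}\wedge h=\pm h_i$, so $Z_i=-h_i/h_{i+1}$ and the tuple $(Z_1,\dots,Z_{d-1})$ determines the line $H^{(1)}$ outright (not merely up to $T$). Combined with your step~2, $H$ is then determined up to $\mathrm{Stab}_T(H^{(1)})$; the maximum span property for $(E,F,H)$ forces $H^{(1)}$ to have all nonzero coordinates in the $(E,F)$-basis, so this stabilizer is trivial and $H$ is unique.
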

\begin{proof} Cf. \cite[\S5 and \S9]{FG1}.
\end{proof}

%%%%%%%%%%%%%%%%%%%%%%%%%%%%%%%
\subsection{Snakes and their moves}\label{ssec:snakes}
%%%%%%%%%%%%%%%%%%%%%%%%%%%%%%%

In this section we describe how the triple and double ratios encode information about the linear algebra of a quadruple of maximum span flags. We follow the exposition in \cite[App. A]{GMN}.

\begin{notation} Let us ease notation for the rest of this section by fixing a maximum span quadruple of flags $(E,F,G,H)$ and by setting $X_{a,b,c}:=X_{a,b,c}(E,F,G)$ and $Z_i:=Z_i(E,F,G,H)$. 
\end{notation}

%%%%%%%%%%%%%%%%%%%%%%%%%%%%%%%
\subsubsection{Snakes}\label{sssec:snakes}
%%%%%%%%%%%%%%%%%%%%%%%%%%%%%%%

The \emph{dual triangle of $\Theta_d$} is the discrete triangle $\Theta^\perp_d= \Theta_{d-1}$ where a point $(\alpha,\beta,\gamma)\in\Theta^\perp_d$ corresponds to the triangle in $\Theta_d$ with vertices
\[
(\alpha+1,\beta,\gamma),\ (\alpha,\beta+1,\gamma),\ (\alpha,\beta,\gamma+1).
\] 
See Figure \ref{fig:dualtriangle}.

\begin{figure}
\includegraphics[scale=0.4]{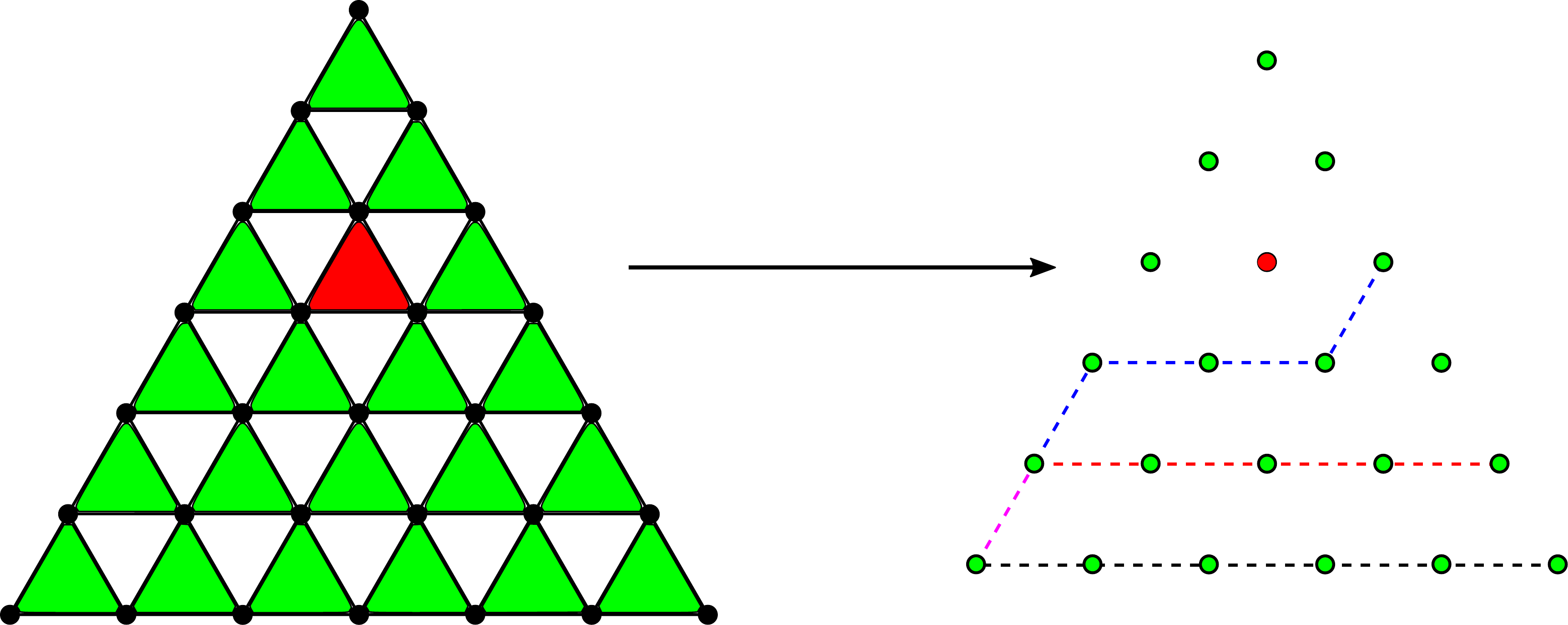}
\put (-408, 17){\makebox[0.7\textwidth][r]{\small$\sigma^\bott$}}
\put (-407, 42){\makebox[0.7\textwidth][r]{\small$\sigma$}}
\put (-530, 93){\makebox[0.7\textwidth][r]{\small$\perp$}}
\caption{The triangle $\Theta_d$ and its dual $\Theta^\perp_d$ for $d=6$. The red triangle on the left corresponds to the red dot on the right. The dashed lines trace examples of snakes in $\Theta^\perp_d$.}
\label{fig:dualtriangle}
\end{figure}

\begin{definition} A \emph{snake $\sigma$ in $\Theta^\perp_d$} is a sequence of $d$ points $\sigma(k)=(\alpha_k,\beta_k,\gamma_k)\in\Theta^\perp_d$ such that $(\alpha_1,\beta_1,\gamma_1)=(d-1,0,0)$ and
\[
(\alpha_{k+1},\beta_{k+1},\gamma_{k+1})=(\alpha_k-1,\beta_k+1,\gamma_k)\text{ or }(\alpha_k-1,\beta_k,\gamma_k+1).
\]
\end{definition}
\begin{example}\label{ex:topbotsnakes} The \emph{top snake} of $\Theta^\perp_d$ is $\sigma^\topp(k)=(d-k,k-1,0)$; the \emph{bottom snake} of $\Theta^\perp_d$ is $\sigma^\bott(k)=(d-k,0,k-1)$.
\end{example}

For a subspace $W\subset\Rbb^d$, the \emph{dual of $W$} is the vector space 
\[
W^\perp:=\{u\in (\Rbb^d)^*\colon u_{\vert_W}=0\}\subset (\Rbb^d)^*.
\] 
Note that $\dim W+\dim W^\perp=d$. For a flag $F\in\mathrm{Flag}(\Rbb^d)$, the \emph{dual flag} $F^\perp\in\mathrm{Flag}((\Rbb^d)^*)$ is defined by
\[
(F^\perp)^{(i)}:=(F^{(d-i)})^\perp.
\]

The data of a snake and a maximum span triple of flags $(E,F,G)$ determines a projective basis for $(\mathbb R^d)^*$ as follows. Given a snake $\sigma(k)=(\alpha_k,\beta_k,\gamma_k)$ the one dimensional subspaces $L_k:=(E^{(\alpha_k)}+F^{(\beta_k)}+G^{(\gamma_k)})^\perp$ form a line decomposition of $(\Rbb^d)^*=\oplus_{i=1}^dL_i$. Choose a non-zero vector $u_1\in L_1=(E^{(d-1)})^\perp$. We wish to inductively define a non-zero vector $u_i\in L_i$ for all $i>1$. Assume we have defined $u_k\in L_k$. Given $\sigma(k)=(\alpha_k,\beta_k,\gamma_k)$, there are two options for the value of $\sigma(k+1)$ or, equivalently, for the line $L_{k+1}$:
\begin{gather*}
L_{k+1}'=\big(E^{(\alpha_k-1)}+F^{(\beta_k)}+G^{(\gamma_k+1)}\big)^\perp\ \text{ or }\ L_{k+1}''=\big(E^{(\alpha_k-1)}+F^{(\beta_k+1)}+G^{(\gamma_k)}\big)^\perp.
\end{gather*}
\begin{lem}\label{lem:snakebase} For $u_k\in L_k$, there exist unique $u'_{k+1}\in L_{k+1}'$ and $u_{k+1}''\in L_{k+1}''$ so that $u_k+u_{k+1}'+u_{k+1}''=0$.
\end{lem}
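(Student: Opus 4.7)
The plan is to show that the three lines $L_k$, $L'_{k+1}$, $L''_{k+1}$ all sit inside a common $2$-dimensional subspace of $(\mathbb{R}^d)^*$ and are pairwise distinct; uniqueness and existence then fall out from elementary linear algebra in that plane.

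First I would observe that, since $\sigma$ is a snake, $\alpha_k + \beta_k + \gamma_k = d-1$. Set
\[
W := E^{(\alpha_k - 1)} + F^{(\beta_k)} + G^{(\gamma_k)}.
\]
By the maximum span property of the triple $(E,F,G)$, $\dim W = (\alpha_k - 1) + \beta_k + \gamma_k = d - 2$, so $P := W^\perp$ is a $2$-dimensional subspace of $(\mathbb{R}^d)^*$. Because $W$ is contained in each of $E^{(\alpha_k)} + F^{(\beta_k)} + G^{(\gamma_k)}$, $E^{(\alpha_k-1)} + F^{(\beta_k)} + G^{(\gamma_k+1)}$, and $E^{(\alpha_k-1)} + F^{(\beta_k+1)} + G^{(\gamma_k)}$, we have $L_k, L'_{k+1}, L''_{k+1} \subset P$.

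Next I would show these three lines are pairwise distinct. Each is the annihilator of a hyperplane (by maximum span, each of the three sums above has dimension $d-1$), so two of the lines coincide iff the two hyperplanes coincide. But for any pair, say $E^{(\alpha_k)} + F^{(\beta_k)} + G^{(\gamma_k)}$ and $E^{(\alpha_k-1)} + F^{(\beta_k)} + G^{(\gamma_k+1)}$, the sum of the two hyperplanes equals $E^{(\alpha_k)} + F^{(\beta_k)} + G^{(\gamma_k+1)}$, which has dimension $d$ by maximum span; hence the two hyperplanes cannot be equal. The same argument handles the other two pairs.

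Since $L'_{k+1}$ and $L''_{k+1}$ are distinct lines in the $2$-dimensional space $P$, they form a basis of $P$. As $u_k \in L_k \subset P$, there exist unique vectors $u'_{k+1} \in L'_{k+1}$ and $u''_{k+1} \in L''_{k+1}$ with $u'_{k+1} + u''_{k+1} = -u_k$, which is the required relation. (Nondegeneracy of the decomposition, namely $u'_{k+1}, u''_{k+1} \ne 0$, follows because $u_k \ne 0$ and $L_k$ is distinct from both $L'_{k+1}$ and $L''_{k+1}$, so $u_k$ cannot lie on either of them.)

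No step looks genuinely hard: the main point I would be careful about is systematically invoking the maximum span property to guarantee both that $\dim W = d-2$ and that the three candidate hyperplanes are pairwise distinct. Once those dimension counts are in place, the lemma reduces to the trivial statement that three pairwise distinct lines in a plane are linearly dependent with unique coefficients.
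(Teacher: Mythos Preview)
Your argument is correct: the key is that the common subspace $W=E^{(\alpha_k-1)}+F^{(\beta_k)}+G^{(\gamma_k)}$ has codimension $2$ by maximum span, so its annihilator $P$ is a plane containing all three lines, and maximum span again forces the three lines to be pairwise distinct. The paper does not actually prove this lemma---it simply cites \cite[\S A.4]{GMN}---so your self-contained proof is more than what the paper provides; there is nothing further to compare.
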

\begin{proof} Cf. \cite[\S A.4]{GMN}.
\end{proof}
The desired basis is obtained by setting
\begin{equation}\label{eqn:snakebasis}
u_{k+1}=
\begin{cases}
u_{k+1}'&\text{ if }\sigma(k+1)=(\alpha_k-1,\beta_k,\gamma_k+1)\\
-u_{k+1}''&\text{ if }\sigma(k+1)=(\alpha_k-1,\beta_k+1,\gamma_k)
\end{cases}
\end{equation}
The choice of sign in Equation \ref{eqn:snakebasis} will be justified in \S \ref{ssec:positivity}. If we replace $u_1$ with $u'_1=\lambda u_1$ for some $\lambda\neq 0$, the corresponding basis $(u'_i)$ will be so that $u'_i=\lambda u_i$. Therefore, a snake determines via this construction a unique projective basis that we refer to as its \emph{snake basis}.

%%%%%%%%%%%%%%%%%%%%%%%%%%%%%%%
\subsubsection{Snake bases changes}\label{sssec:snakechange}
%%%%%%%%%%%%%%%%%%%%%%%%%%%%%%%

A snake can ``move''  in two basic ways.

\begin{definition} Let $\sigma$ and $\sigma'$ be snakes in $\Theta^\perp_d$.
\begin{itemize}
	\item[-] $\sigma'$ is obtained from $\sigma$ by a \emph{tail move} if $\sigma$ and $\sigma'$ only differ in the $d$-th position so that if $\sigma(d)=(0,\beta_d,\gamma_d)$, then $\sigma'(d)=(0,\beta_d+1,\gamma_d-1)$.
	\item[-] $\sigma'$ is obtained from $\sigma$ by a \emph{diamond move at $k+1$}, with $k<d-1$, if $\sigma$ and $\sigma'$ only differ in the $(k+1)$-st position so that if $\sigma(k+1)=(\alpha_{k+1},\beta_{k+1},\gamma_{k+1})$, then $\sigma'(k+1)=(\alpha_{k+1},\beta_{k+1}+1,\gamma_{k+1}-1)$.
\end{itemize}
\end{definition}

More explicitly, if $\sigma'$ is obtained from $\sigma$ by a diamond move at $k+1$ we have
\begin{gather*}
\sigma(i)=\sigma'(i)=(\alpha_i,\beta_i,\gamma_i)\text{ for }i\neq k+1,\\
\sigma(k+1)=(\alpha_k-1,\beta_k,\gamma_k+1),\\
\sigma'(k+1)=(\alpha_k-1,\beta_k+1,\gamma_k).
\end{gather*}

\begin{example}\label{ex:snakemoves} The snake $\sigma$ in Figure \ref{fig:dualtriangle} is obtained from $\sigma^\bott$ by a tail move and diamond moves at $k+1$ for $k=4,3,2,1$.
\end{example}

The next Proposition relates the triple ratio to snake bases and moves.

\begin{prop}[Snake moves]\label{prop:snakemoves} Let $\sigma$ and $\sigma'$ be snakes in $\Theta_d^\perp$. Denote by $(u'_i)$ and $(u_i)$ the respective snake bases. Suppose $u_1=u_1'$.
\begin{itemize}
	\item[-] If $\sigma'$ is obtained from $\sigma$ by a tail move, then
	\begin{equation}\label{eqn:tail}
	u'_i=
	\begin{cases}
	u_i & \text{ for } i< d\\
	u_{i-1}+u_i & \text{ for } i=d
	\end{cases}
	\end{equation}
	\item[-] If $\sigma'$ be a snake obtained from $\sigma$ by a diamond move at $k+1$, then
	\begin{equation}\label{eqn:diamond}
	u'_i=
	\begin{cases}
	u_i & \text{ for } i< k+1\\
	u_{i-1}+u_i & \text{ for } i= k+1 \\
	X_{(\alpha_k-1)(\beta_k+1)(\gamma_k+1)}u_i & \text{ for } i> k+1
	\end{cases}
	\end{equation}
\end{itemize}
\end{prop}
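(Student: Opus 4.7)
The plan is to treat the two moves separately, in both cases exploiting a simple linearity principle: the inductive construction of Lemma \ref{lem:snakebase} is linear in the input, so if $u_k$ is replaced by $\lambda u_k$ then every subsequent snake-basis vector $u_i$ is replaced by $\lambda u_i$. Two consequences will be used repeatedly. First, since $u_1 = u_1'$ and the snakes $\sigma$ and $\sigma'$ agree on an initial segment of positions, the corresponding initial segments of the snake bases coincide by the uniqueness in Lemma \ref{lem:snakebase}. Second, once $u_{i_0}' = X u_{i_0}$ for some scalar $X$ at the first index $i_0$ where the two snakes rejoin, the same scalar propagates: $u_i' = X u_i$ for all $i \geq i_0$.

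For the tail move, the snakes agree at positions $1, \dots, d-1$, so by the first observation $u_i' = u_i$ for $i \leq d-1$. At the final step, Lemma \ref{lem:snakebase} applied to $u_{d-1}$ produces a single relation $u_{d-1} + v^\gamma + v^\beta = 0$, with $v^\gamma$ in the line associated to the gamma direction (used by $\sigma$) and $v^\beta$ in the line associated to the beta direction (used by $\sigma'$). The sign convention of Equation \ref{eqn:snakebasis} then gives $u_d = v^\gamma$ and $u_d' = -v^\beta$, whence $u_d' = u_{d-1} + u_d$.

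For the diamond move at $k+1$, the same reasoning in miniature yields $u_i' = u_i$ for $i \leq k$ and $u_{k+1}' = u_k + u_{k+1}$. By the linearity principle, it then suffices to establish the scalar identity $u_{k+2}' = X_{\alpha_k-1,\beta_k+1,\gamma_k+1}\, u_{k+2}$ at the first rejoining index $i_0 = k+2$. Both $u_{k+2}'$ and $u_{k+2}$ already lie in the one-dimensional line $L_{k+2} = (E^{(\alpha_k-2)} + F^{(\beta_k+1)} + G^{(\gamma_k+1)})^\perp$, so only the scalar is at stake. Using the fixed identification $\bigwedge^d \Rbb^d \cong \Rbb$ to view each $u_j$ as an element of $(\Rbb^d)^*$, I would unwind the two Lemma \ref{lem:snakebase} relations—the one defining $u_{k+2}$ from $u_{k+1}$ and the one defining $u_{k+2}'$ from $u_{k+1}' = u_k + u_{k+1}$—and then pair the difference against a suitable test vector (naturally produced by a wedge of the form $e^{(\alpha_k-2)} \wedge f^{(\beta_k+1)} \wedge g^{(\gamma_k+1)} \wedge \cdot$) so that the resulting equality in $\bigwedge^d \Rbb^d$ collapses, after cancellation, to exactly the product of six ratios appearing in Definition \ref{def:tripleratio}.

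The main obstacle is precisely this last calculation: tracking signs, the cumulative scalars introduced by successive applications of Lemma \ref{lem:snakebase}, and the particular $e^{(\cdot)} \wedge f^{(\cdot)} \wedge g^{(\cdot)}$ wedge products that survive the pairing, and finally recognising the outcome as $X_{\alpha_k-1,\beta_k+1,\gamma_k+1}(E,F,G)$ on the nose rather than its inverse or a signed variant. The sign choice in Equation \ref{eqn:snakebasis} was in fact designed precisely so that the scalar appears as $X_{a,b,c}$ itself, which is the feature that will be exploited in \S\ref{ssec:positivity}. The remaining bookkeeping follows \cite[App.~A]{GMN} closely.
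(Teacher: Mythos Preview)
The paper does not actually prove this proposition; it simply cites \cite[\S 9]{FG1} and \cite[\S A.4]{GMN}. Your outline is a correct sketch of the argument found in those references---the initial-segment coincidence via uniqueness in Lemma~\ref{lem:snakebase}, the tail-move relation $u'_d=u_{d-1}+u_d$ from the single equation $u_{d-1}+v^\gamma+v^\beta=0$ together with the sign convention of Equation~\ref{eqn:snakebasis}, the linearity principle propagating the scalar for $i>k+2$, and the identification of that scalar at $i=k+2$ with the triple ratio---are exactly the steps carried out in \cite[\S A.5]{GMN}.
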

\begin{proof} Cf. \cite[\S 9]{FG1}. See also \cite[\S A.4]{GMN}.
\end{proof}

Fix any pair of snakes $\sigma$ and $\sigma'$ and respective snake bases so that $u_1=u_1'$. We denote by $\mathcal M_\sigma^{\sigma'}(E,F,G)\in\GL(d,\Rbb)$ the upper triangular basis change matrix between the snake bases of $\sigma$ and $\sigma'$. It is a product of (some of) the basis change matrices described in Proposition \ref{prop:snakemoves}.

%%%%%%%%%%%%%%%%%%%%%%%%%%%%%%%
\subsubsection{Shearing}\label{sssec:shearing}
%%%%%%%%%%%%%%%%%%%%%%%%%%%%%%%
Double ratios can also be understood in terms of snake bases. In fact, the maximum span quadruple of flags $(E,F,G,H)$ determines two projective basis $(u_i)$ and $(U_i)$ corresponding to the line decomposition $L_i=(E^{(d-i)}\oplus G^{(i-1)})^\perp$: the projective basis $(u_i)$ defined via the maximum span triple $(E,F,G)$ and the projective basis $(U_i)$ defined via $(E,G,H)$. The following well-known proposition relates these two bases and the double ratios of the quadruple $(E,F,G,H)$. We include a proof for completeness.

\begin{prop}\label{prop:shearing} Let $(u_i)$ and $(U_i)$ be as above. Assume $U_d=u_d$. Then
\begin{equation}\label{eqn:shearing}
U_i=Z_1\cdots Z_{d-i}u_i,\text{ for }\ 0<i<d
\end{equation}
\end{prop}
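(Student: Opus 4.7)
The plan is to reduce the shearing identity to a one-step recursion and then match that recursion with the definition of the double ratio. Both bases live in the same line decomposition: $(u_i)$ is the snake basis for the bottom snake $\sigma^{\bott}$ with respect to $(E,F,G)$, while $(U_i)$ is the snake basis for the top snake $\sigma^{\topp}$ with respect to $(E,G,H)$ (with $G$ playing the role of the middle flag in this second triple). Both snakes trace out the line decomposition $L_i = (E^{(d-i)}+G^{(i-1)})^\perp$, so writing $U_i = \mu_i u_i$ with $\mu_d = 1$ (this is the hypothesis $U_d = u_d$), the statement becomes the assertion that $\mu_i/\mu_{i+1} = Z_{d-i}$ for all $0<i<d$, which then telescopes to the claim.

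To compute this ratio I first fix nonzero auxiliary wedges $e^{(j)}\in\bigwedge^j E^{(j)}$, $g^{(j)}\in\bigwedge^j G^{(j)}$ and nonzero vectors $f\in F^{(1)}$, $h\in H^{(1)}$, and introduce the reference linear form $\phi_k\in L_k$ defined by $\phi_k(v) := e^{(d-k)}\wedge g^{(k-1)}\wedge v$ under $\bigwedge^d\Rbb^d\cong\Rbb$. Since $L_k$ is one-dimensional, I can write $u_k = c_k\phi_k$ and $U_k = C_k\phi_k$, so that $\mu_k = C_k/c_k$. Applying Lemma \ref{lem:snakebase} at step $k+1$ for the bottom snake of $(E,F,G)$, the auxiliary line is $L''_{k+1} = (E^{(d-k-1)}+F^{(1)}+G^{(k-1)})^\perp$, and Equation \ref{eqn:snakebasis} gives $u_{k+1}=u'_{k+1}$ with no sign flip; evaluating $u_k + u_{k+1} + u''_{k+1} = 0$ on $f$ annihilates the last term and yields $u_{k+1}(f) = -u_k(f)$, which via $\phi_k(f) = e^{(d-k)}\wedge g^{(k-1)}\wedge f^{(1)}$ produces an explicit formula for $c_{k+1}/c_k$ as a signed ratio of two $e\wedge g\wedge f$ wedges. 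Applying the same lemma to the top snake of $(E,G,H)$, now with auxiliary line $L'_{k+1} = (E^{(d-k-1)}+G^{(k-1)}+H^{(1)})^\perp$ and Equation \ref{eqn:snakebasis} contributing $U_{k+1} = -U''_{k+1}$, an analogous evaluation on $h$ gives $U_{k+1}(h) = +U_k(h)$ and hence a parallel formula for $C_{k+1}/C_k$ in terms of $e\wedge g\wedge h$ wedges.

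Forming the quotient $\mu_{k+1}/\mu_k = (C_{k+1}/C_k)\cdot(c_k/c_{k+1})$ collapses to a product of four wedge ratios that, by direct comparison with Definition \ref{def:doubleratios} at $j=d-k$, equals $Z_{d-k}^{-1}$. Combined with $\mu_d = 1$, the recursion $\mu_k = Z_{d-k}\,\mu_{k+1}$ telescopes to $\mu_i = Z_1Z_2\cdots Z_{d-i}$, establishing the proposition. The main subtlety I expect is the sign bookkeeping: the bottom-snake step contributes no flip while the top-snake step contributes one, and these two signs must combine with the minus sign built into the definition of the double ratio in exactly the right way. A misstep would either reverse the global sign or introduce a spurious $(-1)^{d-i}$ factor that would then propagate through the entire telescoping product.
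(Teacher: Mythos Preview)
Your argument is correct and the sign bookkeeping works out exactly as you anticipate: the bottom-snake step for $(E,F,G)$ falls in the first branch of Equation~\ref{eqn:snakebasis} (no sign), the top-snake step for $(E,G,H)$ falls in the second branch (one sign), and together with the minus sign built into Definition~\ref{def:doubleratios} you get precisely $\mu_{k+1}/\mu_k = Z_{d-k}^{-1}$. The telescoping then gives the result.

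Your route is genuinely different from the paper's. The paper uses the $\PGL(d,\Rbb)$ action to put the flags in a standard position ($E^{(i)}=\mathrm{Span}(e_1,\dots,e_i)$, $G^{(i)}=\mathrm{Span}(e_d,\dots,e_{d-i+1})$, $F^{(1)}=\mathrm{Span}(\sum e_j)$), computes $Z_i=-h_i/h_{i+1}$ explicitly in these coordinates, and then solves the recursion of Equation~\ref{eqn:snakebasis} by hand for both $(u_i)$ and $(U_i)$. Your approach is coordinate-free: by introducing the reference forms $\phi_k(v)=e^{(d-k)}\wedge g^{(k-1)}\wedge v$ and evaluating the snake recursion on $f$ and on $h$, you bypass the normalization entirely and land directly on the wedge-product expression of the double ratio. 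This is cleaner and more conceptual; it also makes transparent \emph{why} the double ratio is the right invariant governing the shearing, rather than verifying it after the fact. The paper's approach, by contrast, has the advantage of being completely mechanical once the normalization is in place, with no need to track which branch of Equation~\ref{eqn:snakebasis} one is in.
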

\begin{proof} Let $(e_i)$ denote the standard basis of $\Rbb^d$. Up to $\PGL(d,\Rbb)$ action, we can renormalize the flags $(E,F,G)$ so that
\begin{align*}
E^{(i)}&=\mathrm{Span}(e_1, e_2,\dots, e_i);\\
G^{(i)}&=\mathrm{Span}(e_d, e_{d-1},\dots, e_{d-i+1});\\
F^{(1)}&=\mathrm{Span}(e_1+e_2+\dots+e_d).
\end{align*}
Pick a non-zero vector $h_1e_1+h_2e_2+\dots+h_de_d\in H^{(1)}$. Note that the maximum span property implies that $h_i\neq 0$ for all $i=1,2,\dots, d$. By Definition \ref{def:doubleratios}, we compute the double ratios to be
\[
Z_i=-\frac{h_i}{h_{i+1}}, \ i=1,2,\dots,d-1.
\]
Denote by $e_i^t$ the transpose of the vector $e_i$ seen as an element in $(\Rbb^d)^*$. Note that $(E^{(d-1)})^\perp=\mathrm{Span}(e_{d}^t)$ and for $i>1$
\begin{align*}
(E^{(d-i)}\oplus G^{(i-1)})^\perp&=\mathrm{Span}(e_{d-i+1}^t);\\
(E^{(d-i)}\oplus F^{(1)} \oplus G^{(i-2)})^\perp&=\mathrm{Span}(e_{d-i+1}^t-e_{d-i+2}^t).
\end{align*} 
Likewise, note that 
\begin{align*}
(E^{(d-i)}\oplus G^{(i-2)}\oplus H^{(1)})^\perp&=\mathrm{Span}(v)
\end{align*}
with $v\in \mathrm{Span}(e^t_{d-i+1},e^t_{d-i+2})$ and $H^{(1)}\subset \ker(v)$. A computation shows that $v$ is a multiple of the vector $e^t_{d-i+1}+Z_{d-i+1}e^t_{d-i+2}$.

The vectors $u_i=\lambda_ie_{d-i+1}^t$ are defined recursively by solving Equation \ref{eqn:snakebasis}. Namely,
\[
\lambda_{i-1}e_{d-(i-1)+1}^t+\lambda_ie_{d-i+1}^t+\mu_i(e_{d-i+1}^t-e_{d-i+2}^t)=0
\]
for some $\lambda_{i-1},\lambda_i,\mu_i\in\Rbb-\{0\}$. If we set $\lambda_1=1$ and proceed by induction, we obtain $u_i=(-1)^{i-1}e_{d-i+1}^t$. 

The vectors $U_i=-\Lambda_ie_{d-i+1}^t$, on the other hand, are defined recursively by Equation \ref{eqn:snakebasis} as the unique solutions to
\[
-\Lambda_{i-1}e_{d-(i-1)+1}^t+\Lambda_ie_{d-i+1}^t+M_i(e^t_{d-i+1}+Z_{d-i+1}e^t_{d-i+2})=0
\]
for $\Lambda_{i-1},\Lambda_i,M_i\in\Rbb-\{0\}$ were we assume $\Lambda_d=(-1)^{d-2}$ so that $u_d=U_d$. By iteration, we obtain the following equality for all $i\leq d$
\[
\Lambda_{i-1}e_{d-(i-1)+1}^t+(-1)^{i-1}Z_1\dots Z_{d-i}e_{d-i+1}^t=M_i(e^t_{d-i+1}+Z_{d-i+1}e^t_{d-i+2}).
\]
Because $\Lambda_{i-1}=M_iZ_{d-i+1}$, it follows that $U_{i-1}=-(-1)^{i-1}Z_1\dots Z_{d-i}Z_{d-i+1}e_{d-i+2}^t$, as needed.
\end{proof}

We denote by $\mathcal S(E,F,G,H)\in \GL(d,\Rbb)$ the basis change matrix described by Proposition \ref{prop:shearing}.

%%%%%%%%%%%%%%%%%%%%%%%%%%%%%%%
\subsection{Positive configurations of flags and total positivity}\label{ssec:positivity}
%%%%%%%%%%%%%%%%%%%%%%%%%%%%%%%

Total positivity was introduced by Gantmacher and Krein \cite{GKr} and Schoenberg \cite{Sch} for matrices in $\GL(d,\Rbb)$. 

\begin{definition}\label{def:realtotpos} A matrix $M$ in $\GL(d,\Rbb)$ is \emph{totally nonnegative} if all of its minors are greater or equal to zero. An element $M$ in $\GL(d,\Rbb)$ is \emph{totally positive} if all of its minors belong to $\Rbb_{> 0}$.
\end{definition}

\begin{example} The matrix $\begin{pmatrix}1 & 1 & 1\\0 & 1 & 1+X\\0 & 0 & X \end{pmatrix}$ is totally nonnegative whenever $X>0$.
\end{example}

Note that the product of totally nonnegative (resp. positive) matrices is totally nonnegative (resp. positive). 

Definition \ref{def:realtotpos} has been greatly extended by Lusztig \cite{Lus} and it plays a prominent r\^{o}le in \cite{FG1}. In fact, total positivity arises in the context of configurations of flags as follows. 

Denote by $\mathscr P_t$ the regular convex polygon with $t$ vertices $v_1,v_2,\dots, v_t$ appearing in this clockwise order along the boundary of $\mathscr P_t$. A tuple of flags $(F_1,F_2,\dots, F_t)$ determines a natural labeling of the vertices of $\mathscr P_t$: the vertex $v_i$ corresponds to the flag $F_i$. An \emph{oriented triangulation} $\mathscr T$ of $\mathscr P_t$ is a collection of oriented edges $(v_i,v_k)$ that subdivide $\mathscr P_t$ into triangles. We label any such triangle by its vertices $(v_i,v_j, v_k)$ where, by convention, we assume $i< j< k$.  An \emph{internal edge} of $\mathscr T$ is an edge of the triangulation that does not belong to the boundary of $\mathscr P_t$. Any internal edge $(v_i,v_k)$ is a diagonal for a quadrilateral $(v_i, v_j, v_k, v_l)$ where the vertices appear in this cyclic order around $\mathscr P_t$. Any triangulation $\mathscr T$ has $t-3$ internal edges and it subdivides $\mathscr P_t$ into $t-2$ triangles.

%{\color{red}
%\begin{thm}\label{thm:decthm} Any oriented triangulation $\mathscr T$ of the regular convex polygon $\mathscr P_t$ induces a bijection
%\[
%\mathscr X_t\to \mathscr X_3^{t-2}\times (\Rbb-\{0\})^{(d-1)(t-3)}
%\]
%defined by assigning to each triangle $(v_i,v_j,v_k)$ of $\mathscr T$ the configuration of three flags $(F_i,F_j,F_k)$ and to each internal edge $(v_i,v_k)$ the double ratios of the configuration of four flags $(F_i,F_j,F_k,F_l)$, where $(v_i,v_j,v_k,v_l)$ is the quadrilateral in $\mathscr T$ with diagonal $(v_i,v_k)$ and so that $v_i,v_j,v_k$ and $v_l$ appear in this counterclockwise order along the boundary of $\mathscr P_t$.
%\end{thm}
%\begin{proof} Cf. \cite[Thm. 1.1]{FG1}.
%\end{proof}
%}

One can use the Fock-Goncharov coordinates introduced in \S \ref{sssec:tripleratios} and \S \ref{sssec:doubleratios} and the oriented triangulation $\mathscr T$ to define coordinates for points in $\mathscr X_t$. Moreover, one can define $\mathscr X_t^+(\mathscr T)$ as the subset of $\mathscr X_t$ of configurations of $t$ flags whose coordinates with respect to $\mathscr T$ are positive. 

\begin{thm}\label{thm:positivity} Let $\mathscr T$ and $\mathscr T'$ be any two oriented triangulations of the regular convex polygon with $t$ vertices $\mathscr P_t$. Then, 
\[
\mathscr X_t^+(\mathscr T)=\mathscr X_t^+(\mathscr T')\cong \mathbb R_{>0}^N,
\]
where $N=\frac{(d-2)(d-1)}{2}(t-2)+(d-1)(t-3)$.
\end{thm}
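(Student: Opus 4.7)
The plan is to prove the theorem in two steps: first, for a fixed oriented triangulation $\mathscr T$, the triple and double ratios assemble into a map $\Phi_{\mathscr T}\colon\mathscr X_t\to (\Rbb\setminus\{0\})^N$ that restricts to a homeomorphism $\mathscr X_t^+(\mathscr T)\to \Rbb_{>0}^N$; second, the positive locus $\mathscr X_t^+(\mathscr T)\subset\mathscr X_t$ is independent of $\mathscr T$. Here $\Phi_{\mathscr T}$ records, for each triangle $(v_i,v_j,v_k)$ of $\mathscr T$, the $\frac{(d-1)(d-2)}{2}$ triple ratios $X_{a,b,c}(F_i,F_j,F_k)$ with $(a,b,c)\in\Theta_d^\circ$, and for each internal edge $(v_i,v_k)$, viewed as the diagonal of the quadrilateral $(v_i,v_j,v_k,v_l)$ it subdivides, the $d-1$ double ratios $Z_m(F_i,F_j,F_k,F_l)$.

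For the first step, I would argue by induction on $t$, the base cases $t=3$ and $t=4$ being Theorems \ref{thm:tripleratios} and \ref{thm:doubleatios}. In the inductive step, pick an ear $\tau=(v_{i-1},v_i,v_{i+1})$ of $\mathscr T$; deleting $v_i$ gives a triangulation of $\mathscr P_{t-1}$ to which the induction hypothesis applies, recovering $(F_1,\dots,\widehat{F_i},\dots,F_t)$ up to $\PGL(d,\Rbb)$-action. To reconstruct $F_i$, let $v_j$ be the vertex across the diagonal $(v_{i-1},v_{i+1})$ from $v_i$ in the completed triangulation; Proposition \ref{prop:shearing} identifies the $d-1$ double ratios on $(v_{i-1},v_{i+1})$ with the shears relating the snake basis of $(F_{i-1},F_{i+1},F_j)$ to that of $(F_{i-1},F_{i+1},F_i)$, pinning down the orbit of the latter triple, and Theorem \ref{thm:tripleratios} together with the triple ratios of $\tau$ then determines $F_i$ uniquely. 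A coordinate count yields $N=\frac{(d-1)(d-2)}{2}(t-2)+(d-1)(t-3)$, and continuity in both directions is automatic from the rational nature of all formulas involved.

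The second step, independence of $\mathscr T$, reduces by the classical fact that the flip graph of triangulations of $\mathscr P_t$ is connected to the case of a single diagonal flip, and hence (by locality of the flip) to $t=4$. For the quadrilateral $\mathscr P_4$ and its two triangulations $\mathscr T=\{(v_1,v_3)\}$ and $\mathscr T'=\{(v_2,v_4)\}$, I must check that each coordinate of $\Phi_{\mathscr T'}$ is a subtraction-free rational function of the coordinates of $\Phi_{\mathscr T}$. This is the main technical obstacle. I would handle it by normalizing the quadruple $(E,F,G,H)$ via the $\PGL(d,\Rbb)$-action as in the proof of Proposition \ref{prop:shearing}, and using the snake-move relations of Equations \ref{eqn:tail} and \ref{eqn:diamond} together with the shearing relation \ref{eqn:shearing} to express the basis change matrix from the $\mathscr T$-snake basis to the $\mathscr T'$-snake basis as a product of upper-triangular matrices whose entries are polynomials with nonnegative coefficients in the triple ratios $X_{a,b,c}$ and double ratios $Z_i$ of $\mathscr T$. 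The wedge products appearing in the numerators and denominators of the $\mathscr T'$-triple and $\mathscr T'$-double ratios then expand as minors of this totally nonnegative matrix, hence as polynomials with positive coefficients in the $\mathscr T$-coordinates. Since subtraction-free rational maps preserve $\Rbb_{>0}^N$, positivity transfers across the flip in both directions. Combining this with the first step, $\Phi_{\mathscr T}$ restricts to a homeomorphism $\mathscr X_t^+(\mathscr T)\cong\Rbb_{>0}^N$, and for any other triangulation $\mathscr T'$ the subtraction-free flip formulas give $\Phi_{\mathscr T'}(\mathscr X_t^+(\mathscr T))\subset\Rbb_{>0}^N$, yielding $\mathscr X_t^+(\mathscr T)=\mathscr X_t^+(\mathscr T')$ as claimed.
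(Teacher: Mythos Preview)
The paper does not give its own proof of this theorem; it simply cites \cite[Thm.~1.5]{FG1}. Your proposal is therefore not to be compared against an argument in the paper but against the Fock--Goncharov proof itself, and your outline does follow their strategy: parametrize $\mathscr X_t$ inductively via ears of the triangulation (your Step~1), then show the positive locus is invariant under flips via subtraction-free mutation formulas (your Step~2).

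Step~1 is fine. In Step~2 there is a genuine gap in the justification. You write that the $\mathscr T'$-coordinates ``expand as minors of this totally nonnegative matrix, hence as polynomials with positive coefficients in the $\mathscr T$-coordinates.'' Total nonnegativity is a \emph{numerical} condition: it says the minors evaluate to nonnegative real numbers for each choice of positive $\mathscr T$-coordinates, but this does not by itself imply that the minors are subtraction-free as rational functions of those coordinates. (A polynomial can be nonnegative on $\Rbb_{>0}^N$ without having all coefficients nonnegative.) What you actually need is that the transition matrix factors as a product of the elementary matrices of Proposition~\ref{prop:snakemoves} and Proposition~\ref{prop:shearing}, each of whose minors is individually a monomial (or zero) in the $\mathscr T$-coordinates; then Cauchy--Binet expresses each minor of the product as a sum of products of such monomials, which \emph{is} subtraction-free. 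Alternatively, and this is what Fock and Goncharov in fact do in \cite[\S10]{FG1}, one computes the flip formulas directly and observes they take the cluster-mutation form $X' = X^{-1}\prod(1+X_j)^{\pm1}$, which is manifestly subtraction-free. Either route closes the gap, but the word ``hence'' in your argument does not.
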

\begin{proof} Cf. \cite[Thm. 1.5]{FG1}.
\end{proof}

Theorem \ref{thm:positivity} justifies the following definition.

\begin{definition}\label{def:posconfflags} A \emph{positive configuration of $t$ flags} is a configuration of $t$ flags that belongs to $\mathscr X_t^+(\mathscr T)$ for any (all) oriented triangulation(s) $\mathscr T$ of the polygon $\mathscr P_t$. Denote by $\mathscr X^+_t$ the space of positive configurations of $t$ flags. A tuple of flags $(F_1,F_2,\dots, F_t)$ in $\Rbb^d$ is \emph{positive} if its $\PGL(d,\Rbb)$ orbit is a positive configuration of $t$ flags.
\end{definition}

%{\color{red}
%\begin{notation} When $t=4$, there is a conflict between Theorem \ref{thm:doubleatios} and Definition \ref{def:posconfflags} for the notion of positivity for a quadruple of flags. In what follows, if we focus on a quadruple of flags $(E,F,G,H)$, we say that $(E,F,G,H)$ is positive if the quadruple $(E,G,F,H)$ is positive in the sense of Definition \ref{def:posconfflags}. In other words, we require that the double ratios $Z_i(E,F,G,H)$ are positive.
%\end{notation}
%}

The choice of signs in Equation \ref{eqn:snakebasis} guarantees plus signs in Equations \ref{eqn:tail} and \ref{eqn:diamond} and, consequently, it implies the following.
 
\begin{cor} Fix a positive quadruple of flags $(E,F,G,H)$ and a non-zero vector in $(E^{d-1})^\perp$. 
\begin{itemize}
	\item[-] For any two snakes $\sigma$ and $\sigma'$, the upper triangular matrix $\mathcal M^{\sigma'}_{\sigma}(E,F,G)\in\GL(d,\Rbb)$ describing the snake bases change is totally nonnegative.
	\item[-] The diagonal matrix $\mathcal S(E,F,G,H)$ describing the shearing basis change is totally nonnegative.
	\item[-] There exist bases $\mathcal U=(u_i)$ and $\mathcal U'=(u_i')$ of $(\Rbb^d)^*$ such that 
	\begin{gather*}
	(E^{(d-i)})^\perp=\mathrm{Span}(u_1,\dots, u_i), \ (G^{(d-i)})^\perp=\mathrm{Span}(u_d,\dots, u_{d-i+1})\\
	(F^{(d-i)})^\perp=\mathrm{Span}(u'_1,\dots, u'_i), \ (H^{(d-i)})^\perp=\mathrm{Span}(u'_d,\dots, u'_{d-i+1})
	\end{gather*}
	and the matrix in the basis $\mathcal U$ of the element sending $\mathcal U$ to $\mathcal U'$ is totally nonnegative.
\end{itemize}
\end{cor}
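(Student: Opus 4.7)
For bullet~(1), Proposition~\ref{prop:snakemoves} factors $\mathcal{M}^{\sigma'}_\sigma(E,F,G)$ as a finite product of tail-move and diamond-move matrices. By Equation~\ref{eqn:tail}, a tail-move matrix differs from the identity only by a single $1$ in position $(d-1,d)$, so all of its minors lie in $\{0,1\}$. By Equation~\ref{eqn:diamond}, a diamond-move matrix at position $k+1$ factors as an analogous elementary unipotent matrix (with the extra $1$ in position $(k,k+1)$) times the diagonal matrix whose last $d-k-1$ entries equal the triple ratio $X_{(\alpha_k-1)(\beta_k+1)(\gamma_k+1)}$. Positivity of $(E,F,G)$ as a subtriple of the positive quadruple (Theorem~\ref{thm:positivity}) forces each such triple ratio to be strictly positive, so the diagonal factor is totally nonnegative; the product of totally nonnegative matrices is totally nonnegative. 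Bullet~(2) is immediate from Equation~\ref{eqn:shearing}: $\mathcal{S}(E,F,G,H)$ is diagonal with entries that are products of the strictly positive double ratios $Z_i$, hence totally nonnegative.

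For bullet~(3), I would take $\mathcal{U}$ to be the bottom snake basis of the positive triple $(E,F,G)$. Its associated line decomposition $L_k=(E^{(d-k)}\oplus G^{(k-1)})^\perp$ together with a direct dimension count yields $\mathrm{Span}(u_1,\dots,u_i)=(E^{(d-i)})^\perp$ and $\mathrm{Span}(u_d,\dots,u_{d-i+1})=(G^{(d-i)})^\perp$. Symmetrically, take $\mathcal{U}'$ to be the bottom snake basis of the positive triple $(F,G,H)$, which is positive by Theorem~\ref{thm:positivity} applied to the triangulation of $\mathscr P_4$ using the diagonal $(F,H)$; the same dimension count yields the required span identities involving $F$ and $H$. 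To verify that the change-of-basis matrix $M$ from $\mathcal{U}$ to $\mathcal{U}'$ is totally nonnegative, the plan is to express the entries of $M$ as explicit rational functions of the Fock--Goncharov coordinates of $(E,F,G,H)$ by combining the snake-move expansions from bullet~(1) within each of the triples $(E,F,G)$ and $(F,G,H)$ with the shearing identifications from bullet~(2) across the two diagonals of $\mathscr P_4$. The positivity of every Fock--Goncharov coordinate (Theorem~\ref{thm:positivity}) should then make every minor of $M$ a positive expression in positive quantities, hence nonnegative.

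The main obstacle is bullet~(3). Since $u_1\in(E^{(d-1)})^\perp$ and $u'_1\in(F^{(d-1)})^\perp$ lie in distinct $1$-dimensional subspaces of $(\Rbb^d)^*$, the matrix $M$ is \emph{not} upper triangular and therefore cannot be obtained as a direct product of the upper-triangular snake-move and shearing matrices supplied by bullets~(1) and~(2). Bridging this gap requires either an explicit closed-form expression for $M$ whose minors can be checked one at a time from the positive Fock--Goncharov coordinates, or an appeal to the Fock--Goncharov/Lusztig correspondence between positive configurations of flags and Lusztig's totally nonnegative semigroup in $\GL(d,\Rbb)$. In either approach, positivity of all Fock--Goncharov coordinates of the positive quadruple is ultimately the key ingredient ensuring total nonnegativity of $M$.
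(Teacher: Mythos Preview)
Your treatment of bullets (1) and (2) is correct and matches the paper exactly: the paper's entire argument is the single sentence preceding the corollary, namely that the sign convention in Equation~\ref{eqn:snakebasis} forces all the off-diagonal $1$'s in Equations~\ref{eqn:tail} and~\ref{eqn:diamond}, so each elementary move matrix is totally nonnegative (unipotent with a single $1$ off the diagonal, or diagonal with positive triple-ratio entries), and products of totally nonnegative matrices remain totally nonnegative. Your factorization of the diamond-move matrix and your observation that $\mathcal S(E,F,G,H)$ is diagonal with entries $\prod Z_j>0$ are precisely the intended content.

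For bullet (3), the paper does not supply any more detail than you do; it treats the corollary as a summary of facts from \cite{FG1} rather than something to be reproved. Your diagnosis of the obstacle is accurate: the change of basis from the $(E,G)$-adapted line decomposition to the $(F,H)$-adapted one is genuinely not upper triangular, so it cannot be written as a product of the snake-move and shear matrices alone. Your proposed resolution via the Fock--Goncharov/Lusztig correspondence is exactly the underlying reason the statement holds and is what the paper is implicitly invoking; this is part of the content of \cite[\S5, \S9]{FG1}, already cited for Theorems~\ref{thm:tripleratios} and~\ref{thm:doubleatios}. One small remark on your construction: your choice of $\mathcal U'$ as the bottom snake basis of $(F,G,H)$ does give the correct line decomposition $L'_i=(F^{(d-i)}\oplus H^{(i-1)})^\perp$, but to make the change-of-basis matrix land in the totally nonnegative part one must also pin down the scalar normalization of $u'_1$ relative to $u_1$; this is again absorbed into the Fock--Goncharov machinery rather than made explicit here.
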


%%%%%%%%%%%%%%%%%%%%%%%%%%%%%%%
%%%%%%%%%%%%%%%%%%%%%%%%%%%%%%%
\section{The building $\Bcal_d$}\label{sec:building}
%%%%%%%%%%%%%%%%%%%%%%%%%%%%%%%
%%%%%%%%%%%%%%%%%%%%%%%%%%%%%%%

%%%%%%%%%%%%%%%%%%%%%%%%%%%%%%%
\subsection{Axiomatic definition of $\Rbb$-Euclidean building}\label{ssec:axiomatic}
%%%%%%%%%%%%%%%%%%%%%%%%%%%%%%%

Let us recall the axiomatic definition of an $\Rbb$-Euclidean building associated to the general linear group. 

Consider the affine space
\[
\Abb^{d-1}=\{(x_1,x_2,\dots,x_d)\in \Rbb^d\colon x_1+x_2+\dots+x_d=1\}
\]
with underlying vector space
\[
\mathbb V^{d-1}=\{(x_1,x_2,\dots,x_d)\in \Rbb^d\colon x_1+x_2+\dots+x_d=0\}.
\]

Let $\mathfrak S_d$ denote the symmetric group on $d$ elements and let the \emph{affine Weyl group} be the semi-direct product $W_{\text{aff}}:=\mathfrak S_d\ltimes \mathbb V^{d-1}$. The symmetric group acts on $\Abb^{d-1}$ permuting the coordinates, and $\mathbb V^{d-1}$ acts on $\Abb^{d-1}$ by translations, therefore $W_{\text{aff}}$ acts on $\Abb^{d-1}$.

The standard inner product in $\Rbb^d$ induces a $\mathfrak S_d$-invariant inner product on $\Abb^{d-1}$ for which the elements of $W_{\text{aff}}$ are isometries.

The \emph{fundamental Weyl chamber} of $\Abb^{d-1}$ is the cone $\mathfrak C:=\{x\in\Abb^{d-1}\colon x_1\geq x_2\geq \dots \geq x_d\}$. A \emph{Weyl sector} $S$ is an image of $\mathfrak C$ via an element of $w\in W_{\text{aff}}$. 

\begin{definition}\label{def:building}
An \emph{$\Rbb$-Euclidean building modeled on $(\Abb^{d-1},W_{\text{aff}})$} is a set $\Bcal$ together with a family $\mathscr{A}$ of injective maps $f\colon \Abb^{d-1}\to \Bcal$ satisfying the following axioms:
\begin{itemize}
	\item[1.] if $f\in\mathscr A$, then $f \circ w\in\mathscr A$ for any element $w\in W_{\text{aff}}$;
	\item[2.] for any $f,f'\in\mathscr A$, the set $I:=(f^{-1}\circ f')(\Abb^{d-1})\subset\Abb^{d-1}$ is convex, closed and $(f^{-1}\circ f')_{\vert_I}$ is the restriction to $I$ of some $w\in W_{\text{aff}}$;
	\item[3.] Any two points $x$, $y$ belong to a common apartment;
	\item[4.] Any two Weyl sectors in $\Bcal$ contain Weyl subsectors contained in a common apartment;
\end{itemize}
Axioms 2 and 3 imply that the distance in $\Abb^{d-1}$ induces a distance in $\Bcal$.
\begin{itemize}
	\item[5.] For any point $x\in\Bcal$ and any $f\in\mathscr A$ such that $x\in f(\Abb^{d-1})$, there exists a retraction $r_{x,f}$ of $\Bcal$ onto $\Abb^{d-1}$ such that $r_{x,f}^{-1}(x)=x$ and $r_{x,f}$ decreases distances.
	\end{itemize}
\end{definition}
An element of $\mathscr A$ is called a \emph{marking}; the image of $\Abb^{d-1}$ under a marking is an \emph{apartment}. It follows from item 2. in Definition \ref{def:building} that any two markings of a given apartment differ by an element of the affine Weyl group.

%%%%%%%%%%%%%%%%%%%%%%%%%%%%%%%
\subsection{Asymptotic cones}
%%%%%%%%%%%%%%%%%%%%%%%%%%%%%%%

We will focus on a specific $\Rbb$-Euclidean building $\Bcal_d$ that admits an explicit model we describe in \S \ref{ssec:B_d}. We start by recalling some concepts from non-standard analysis. We refer to standard references \cite{Gro2, KapLe, KLe, Par1, DriW} for detailed discussions.

A \emph{non-principal ultrafilter} $\omega$ is a finitely additive measure on the natural numbers with values in $\{0,1\}$ and such that $\omega(S)=0$ whenever $S$ is finite. Given a sequence $(x_n)\subset\Rbb$ we say that  $x\in[-\infty, +\infty]$ is the {\em $\omega$-limit of $x_n$}, and we write $x:=\lim_\omega x_n$, if for any neighborhood $U$ of $x$ one has $x_n\in U$ for $\omega$-almost every $n$. Because $[-\infty,+\infty]$ is a compact and Hausdorff topological space, every sequence $(x_n)\subset \Rbb$ has a unique $\omega$-limit in $[-\infty,+\infty]$. 

%A sequence $(x_n)$ is \emph{$\omega$-bounded} if $-\infty<\lim_\omega x_n<+\infty$. 

\begin{notation} Let us fix once and for all a non-principal ultrafilter $\omega$ and a \emph{scaling sequence} $\lambda:=(\lambda_n)\subset\Rbb$ such that $\lambda_n\geq 1$ and $\lim_n\lambda_n= \infty$. 
\end{notation}

\begin{definition}\label{def:ascone} Let $(X,d,x_0)$ be a metric space with basepoint $x_0$. The \emph{asymptotic cone of $(X,d,x_0)$ with respect to the non-principal ultrafilter $\omega$ and the scaling sequence $\lambda$} is the set
\[
\mathcal C_{\omega,\lambda}(X,d,x_0)=\left\{(x_n)\in\prod_nX\colon \lim_\omega d(x_0,x_n)^{1/\lambda_n}<\infty\right\}_{\bigg/\mathlarger{\sim}}
\]
where $(x_n)\sim(y_n)$ if the $\omega$-limit $\lim_\omega d(x_n,y_n)^{1/\lambda_n}$ is zero.
\end{definition}

The \emph{ultralimit} $x$ of a sequence $(x_n)\in\prod_nX$ such that $\lim_\omega d(x_n,y_n)^{1/\lambda_n}<\infty$ is the equivalence class of $(x_n)$ in the asymptotic cone $\mathcal C_{\omega,\lambda}(X,d,x_0)$. The asymptotic cone is a complete metric space when equipped with the distance $d(x,y):=\lim_\omega d(x_n,y_n)^{1/\lambda_n}$ (Cf. \cite[\S 2]{KLe}).

Recall that a \emph{valuation} on a field $\mathbb K$ is an application $v\colon \mathbb K\to\Rbb\cup\{\infty\}$ such that for $x,y\in\mathbb K$
\begin{itemize}
	\item[-] $v(x)=\infty$ if and only if $x=0$;
	\item[-] $v(xy)=v(x)+v(y)$;
	\item[-] $v(x+y)\geq \min\{v(x),v(y)\}$ with equality whenever $v(x)\neq v(y)$.
\end{itemize}
Moreover, a valuation defines an associated \emph{absolute value} $\absv{x}:=e^{-v(x)}$ where, by convention, $e^{-\infty}=0$.

An example of an asymptotic cone is obtained by considering $\Rbb$ as a metric space with distance given by the absolute value and basepoint 0. It turns out that $\Fbb:=\mathcal C_{\omega,\lambda}(\Rbb,|\cdot|,0)$ is a field when equipped with the natural sum and multiplication of sequences (cf. \cite[p. 69]{Par3}). 

The field $\Fbb$ has a natural valuation given by
\begin{align*}
v\colon \ \mathbb F \ &\to\Rbb\cup\{\infty\}\\
x&\mapsto -\lim_\omega \left(\log \vert x_n\vert^{1/\lambda_n}\right)
\end{align*}
We embed $\Rbb$ in $\mathbb F$ via constant sequences, and we observe that $v(\Rbb-\{0\})=0$. 

%%%%%%%%%%%%%%%%%%%%%%%%%%%%%%%%
%\subsubsection{An asymptotic cone for $\Rbb$}
%%%%%%%%%%%%%%%%%%%%%%%%%%%%%%%%

%%%%%%%%%%%%%%%%%%%%%%%%%%%%%%%
\subsection{A concrete model for $\mathcal B_d$}\label{ssec:B_d}
%%%%%%%%%%%%%%%%%%%%%%%%%%%%%%

%%%%%%%%%%%%%%%%%%%%%%%%%%%%%%%
\subsubsection{Ultrametric norms}
%%%%%%%%%%%%%%%%%%%%%%%%%%%%%%%

Let $V$ be a $d$-dimensional $\mathbb F$ vector space.

\begin{definition}\label{def:ultrametric norms}
An \emph{ultrametric norm} $\eta$ on $V$ is a function $\eta\colon V\to \Rbb$ such that for all $w,z\in V$ and all $x\in \Fbb$
\begin{itemize}
	\item[-] $\eta(w)=0$ if and only if $w=0$;
	\item[-] $\eta(xw)=\absv{x}\eta(w)$;
	\item[-] $\eta(w+z)\leq\max\{\eta(w),\eta(z)\}$.
\end{itemize}
\end{definition}
The absolute value $\absv{\cdot}$ on $\Fbb$ is an example of an ultrametric norm on $V=\Fbb$. 

Let $\Ecal=(e_1,e_2,\dots,e_d)$ be a basis of $V$. We say that the ultrametric norm $\eta$ is \emph{adapted to $\Ecal$} if for any $w=x_1e_1+x_2e_2+\dots+x_de_d\in V$
\[
\eta(w)=\max_{j=1,\dots,d} \absv{x_j}\eta(e_j).
\]
An ultrametric norm $\eta$ is \emph{adaptable} if there exists a basis $\Ecal$ of $V$ so that $\eta$ is adapted to $\Ecal$. Two ultrametric norms $\eta,\eta'$ are \emph{homothetic} if there exists $x\in\Fbb-\{0\}$ such that for every vector $w\in V$, $\eta(w)=\eta'(xw)$.

\begin{thm}[Parreau \cite{Par1}]\label{thm:normmodel} The set $\Bcal_d$ of homothety classes of adaptable ultrametric norms on the $d$-dimensional $\Fbb$-vector space $V$ is an $\Rbb$-Euclidean building modeled on $(\Abb^{d-1},W_{\text{aff}})$.
\end{thm}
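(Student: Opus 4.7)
The plan is to exhibit $\mathcal{B}_d$ as a building by producing a family $\mathscr{A}$ of markings and verifying the five axioms of Definition \ref{def:building}. To each ordered $\mathbb{F}$-basis $\mathcal{E}=(e_1,\dots,e_d)$ of $V$, I associate the map $f_\mathcal{E}\colon\mathbb{A}^{d-1}\to\mathcal{B}_d$ sending $(x_1,\dots,x_d)$ to the homothety class of the ultrametric norm $\eta_{\mathcal{E},x}$ adapted to $\mathcal{E}$ and satisfying $\eta_{\mathcal{E},x}(e_j)=e^{-x_j}$. Two homothetic adapted norms differ on each $e_j$ by a common factor $\absv{\lambda}$, so their coordinate vectors differ by a single real constant; restricting to $\sum_j x_j=1$ picks a unique representative and makes $f_\mathcal{E}$ injective. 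Declare $\mathscr{A}=\{f_\mathcal{E}:\mathcal{E} \text{ a basis of } V\}$; by construction each apartment is the set of homothety classes of norms adapted to a fixed basis.

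For axiom 1, the action of $W_{\text{aff}}=\mathfrak{S}_d\ltimes\mathbb{V}^{d-1}$ on $\mathbb{A}^{d-1}$ is realized by operations on bases: a permutation $\sigma\in\mathfrak{S}_d$ reorders basis vectors, giving $f_\mathcal{E}\circ\sigma=f_{\sigma\cdot\mathcal{E}}$, and a translation by $v=(v_1,\dots,v_d)\in\mathbb{V}^{d-1}$ is realized by rescaling: since the valuation on $\mathbb{F}^\ast$ surjects onto $\mathbb{R}$ (because the scaling sequence $\lambda_n\to\infty$), one can pick $y_j\in\mathbb{F}^\ast$ with $v(y_j)=-v_j$ and set $\mathcal{E}'=(y_1e_1,\dots,y_de_d)$, yielding $f_\mathcal{E}\circ t_v=f_{\mathcal{E}'}$. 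For axiom 2, if $e'_i=\sum_j a_{ij}e_j$ is the change-of-basis formula, the ultrametric inequality expresses the requirement that $\eta_{\mathcal{E},x}$ be adapted to $\mathcal{E}'$ as finitely many affine inequalities among the $x_j$ and the valuations $v(a_{ij})$; hence $I=(f_\mathcal{E}^{-1}\circ f_{\mathcal{E}'})(\mathbb{A}^{d-1})$ is a closed convex polyhedral subset of $\mathbb{A}^{d-1}$. On each cell of the resulting polyhedral subdivision the transition $f_\mathcal{E}^{-1}\circ f_{\mathcal{E}'}$ is the composition of a permutation of coordinates (encoding which $e_j$ achieves the maximum for each $e'_i$) and a translation (by the corresponding $v(a_{ij})$), and these local formulas patch by continuity into a single element of $W_{\text{aff}}$.

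The main obstacle is axiom 3: given classes $[\eta],[\eta']\in\mathcal{B}_d$, I must produce an $\mathbb{F}$-basis of $V$ to which both representatives are simultaneously adapted. The strategy is inductive on $d$ and mirrors the Cartan decomposition for $\GL(d)$ over a $p$-adic field, now transported to the non-discretely valued field $\mathbb{F}$. One extracts a vector $e_1\in V\setminus\{0\}$ realizing an extremum of the function $w\mapsto \log\eta(w)-\log\eta'(w)$; adaptability of both norms ensures that this function takes only finitely many values and so attains its extremum. Both norms then descend to adaptable ultrametric norms on the quotient $V/\langle e_1\rangle$, and the induction hypothesis produces a common adapted basis on the quotient whose lift to $V$, together with $e_1$, is the desired joint basis. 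With axiom 3 in hand, axioms 4 and 5 follow with less effort: for axiom 4, a Weyl sector corresponds to a decreasing flag of subspaces of $V$ with a germ of adapted norm, and a common subsector of two sectors is constructed by iterating the dimension induction compatibly with both flags; for axiom 5, given $x\in f_\mathcal{E}(\mathbb{A}^{d-1})$, the retraction $r_{x,f_\mathcal{E}}$ sends $[\eta']$ to the class of the unique norm adapted to $\mathcal{E}$ whose values on each $e_j$ agree with those of $\eta'$, and distance nonincrease is verified by moving to a common apartment containing $x$ and $[\eta']$ provided by axiom 3. I would conclude by referring to Parreau \cite{Par1} for the technical details of the inductive arguments underpinning axioms 3 and 4.
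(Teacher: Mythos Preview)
The paper does not prove this theorem: it is stated with attribution to Parreau \cite{Par1} and no argument is given in the text. There is therefore nothing in the paper to compare your proposal against; the author is simply quoting a known result and immediately moving on to use it.

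Your sketch follows the standard Goldman--Iwahori/Parreau approach and is broadly on the right track, but one assertion is not correct as stated. In your treatment of axiom~3 you write that ``adaptability of both norms ensures that this function takes only finitely many values.'' The function $w\mapsto \log\eta(w)-\log\eta'(w)$ on $V\setminus\{0\}$ typically takes a continuum of values even when both norms are adaptable. What is true, and what Parreau uses, is that this ratio is bounded above and below and \emph{attains} its extremum on some nonzero vector; the proof of attainment uses the ultrametric inequality and the adapted form of $\eta'$, not any finiteness of the value set. Once you have such an extremal vector $e_1$, the splitting $V=\mathbb{F}e_1\oplus W$ orthogonal for both norms (in the ultrametric sense) is what lets the induction go through. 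Since you already plan to defer to \cite{Par1} for these details, this is a minor correction to the narrative rather than a structural gap.
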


The action of $g\in\GL(V)$ on an ultrametric norm $\eta$ is given by $g.\eta=\eta\circ g^{-1}$. Note that scalar matrices act by homothety on ultrametric norms, therefore the action of $\GL(V)$ descends to an action of $\PGL(V)$ on $\Bcal_d$. If $\eta$ is adapted to the basis $\Ecal$, then $g.\eta$ is adapted to $g\Ecal$. It is easy to see that $\PGL(V)$, and therefore $W_{\text{aff}}$, acts on $\Bcal_d$ via isometries.

%%%%%%%%%%%%%%%%%%%%%%%%%%%%%%%
\subsubsection{Apartments}\label{sssec:apts}
%%%%%%%%%%%%%%%%%%%%%%%%%%%%%%%

Any basis $\Ecal=(e_1,\dots,e_d)$ of the vector space $V$ determines a \emph{standard marking}
\begin{align*}
f_\Ecal\colon \ \ \ \Abb^{d-1}\  &\rightarrow \ \ \ \ \ \Bcal_d\\
\begin{pmatrix}x_1\\x_2\\\vdots\\ x_d\end{pmatrix}&\mapsto \begin{cases} [\eta]\colon \eta\text{ is adapted to }\Ecal,\\\eta(e_j)=e^{-x_j} 
\end{cases}
\end{align*}
\begin{remark}\label{rmk:actiononapts}
Note that the apartment $\Acal_{\Ecal}:=f_{\Ecal}(\Abb^{d-1})$ depends exclusively on the line decomposition $L_\Ecal$ defined as $(L_\Ecal)_i=\mathrm{Span}(e_i)$. The action of the affine Weyl group on $\Abb^{d-1}$ can be interpreted via the standard marking $f_\Ecal$ as an action on the set of bases that define the line decomposition $L_\Ecal$, or, equivalently, the set of markings of $\Acal_{\Ecal}$. More explictly, let $\sigma$ be a permutation in $\mathfrak S_d$ and denote by $\sigma\Ecal$ the basis $(e_{\sigma(1)},\dots, e_{\sigma(d)})$. Then,
\[
f_\Ecal(x_1,x_2,\dots,x_d)=f_{\sigma\Ecal}(x_{\sigma(1)},x_{\sigma(2)},\dots,x_{\sigma(d)}).
\]
Likewise, if $y_1,y_2,\dots, y_d\in\Fbb-\{0\}$, denote by $y\Ecal$ the basis $(y_1e_1,y_2e_2,\dots, y_de_d)$. Then,
\[
f_\Ecal(x_1,\dots,x_d)=f_{y\Ecal}(x_1+\tilde{y}_1,x_2+\tilde{y}_2,\dots,x_d+\tilde{y}_d).
\]
where $(\tilde{y}_1,\tilde y_2,\dots, \tilde{y}_d)$ is the unique vector in $\mathbb V^{d-1}$ such that $\tilde{y}_i-\tilde{y}_{i+1}=v(y_{i})-v(y_{i+1})$.
\end{remark}

%%%%%%%%%%%%%%%%%%%%%%%%%%%%%%%
\subsection{Intersection of apartments}\label{ssec:interofapts}
%%%%%%%%%%%%%%%%%%%%%%%%%%%%%%%

We outline a general algorithm to parametrize the intersection of apartments in $\Bcal_d$. We refer to \cite{Par1} for proofs. 

One of the main contributions of this paper is to show that total nonnegativity can be used to simplify this algorithm.

\begin{notation}
For the rest of this section, let $\Ecal=(e_1,\dots, e_d)$ and $\Ecal'=(e_1',\dots,e_d')$ be bases of $V$, $g\in\mathrm{GL}(V)$ be such that $g\Ecal=\Ecal'$ and $(g_{ij})_{1\leq i,j\leq d}$ be the matrix of $g$ in the basis $\Ecal$.
\end{notation}

\textbf{Step 1: }(Cf. \cite[Cor. 3.3]{Par1}) There exists $\overline{\sigma}\in \mathfrak S_d$ such that for all ultrametric norms $\eta$ adapted to both $\Ecal$ and $\Ecal'$
\[
\eta(e_j')=\absv{g_{\overline{\sigma}(j)j}}\eta(e_{\overline\sigma(j)}).
\]
Moreover, in this case
\[
v(\det g)=\min_{\sigma\in\mathfrak S_d} v(g_{\sigma(1)1}\cdots g_{\sigma(d)d})=v(g_{\overline{\sigma}(1)1}\cdots g_{\overline{\sigma}(d)d}).
\]

In other words, we can reorder the elements of $\Ecal$ so that the product of the diagonal entries has the same valuation as the determinant. Note that, in general, $v(\det g)$ is only greater or equal to $\min_{\sigma\in\mathfrak S_d} v(g_{\sigma(1)1}\cdots g_{\sigma(d)d})$.

Consider the apartments $\Acal=f_\Ecal(\Abb^{d-1})$ and $\Acal'=f_{\Ecal'}(\Abb^{d-1})$ and assume that the intersection of these two apartments is non-empty. This is equivalent to saying that there exists $\eta$ adapted to both $\Ecal$ and $\Ecal'$.

\textbf{Step 2: }(Cf. \cite[\S 3.4]{Par1})  Suppose the permutation $\overline \sigma$ of Step 1 is the identity and $g_{ii}=1$ for $i=1,2,\dots,d$. Then,
\[
\Acal\cap\Acal'=\{[\eta]\in\Acal\colon g.\eta=\eta\}.
\]

\textbf{Step 3: }(Cf. \cite[Prop. 3.5]{Par1}) Suppose $\absv{\det g}=1$. Then, the subset $\{[\eta]\in\Acal\colon g.\eta=\eta\}$ of the apartment $\Acal$ is the image under the standard marking $f_\Ecal$ of the set
\[
\{x\in\Abb^{d-1}\colon -v(g_{ij})\leq x_i-x_j\leq v(g_{ji})\text{ for }1\leq i<j\leq d\}.
\]

The next proposition follows by combining Steps 1, 2 and 3 above and it is used implicitly in \cite[\S 3.4]{Par1}.

\begin{prop}[Intersection of apartments]\label{prop:intersection} Consider bases $\Ecal=(e_1,\dots,e_d)$ and $\Ecal'=(e_1',\dots,e_d')$ of the $\Fbb$-vector space $V$. Let $g\in\mathrm{GL}(V)$ be such that $g\Ecal=\Ecal'$ and let $(g_{ij})_{1\leq i,j\leq d}$ be the matrix of $g$ in the basis $\Ecal$. Denote by $\Acal=f_\Ecal(\Abb^{d-1})$ and $\Acal'=f_{\Ecal'}(\Abb^{d-1})$ the apartments in $\Bcal_d$ defined via the standard markings and assume $\Acal\cap\Acal'\neq \emptyset$. Then, there exists a permutation $\overline\sigma\in W$ such that $\Acal\cap\Acal'$ is the image under the standard marking $f_\Ecal$ of the set
\begin{gather*}
\left\{x\in \Abb^{d-1}\colon -v\left(\frac{g_{\overline \sigma(i)j}}{g_{\overline\sigma(i)i}}\right)\leq x_{\overline\sigma(i)}-x_{\overline\sigma(j)}+v\left(\frac{g_{\overline\sigma(i)i}}{g_{\overline\sigma(j)j}}\right)\leq v\left(\frac{g_{\overline\sigma(j)i}}{g_{\overline\sigma(j)j}}\right)\right\}\\
=\left\{x\in \Abb^{d-1}\colon -v\left(\frac{g_{\overline \sigma(i)j}}{g_{\overline\sigma(j)j}}\right)\leq x_{\overline\sigma(i)}-x_{\overline\sigma(j)}\leq v\left(\frac{g_{\overline\sigma(j)i}}{g_{\overline\sigma(i)i}}\right)\right\}.
\end{gather*}
\end{prop}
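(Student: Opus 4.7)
The plan is to combine Parreau's Steps 1--3 above by making an explicit change of basis of $\Ecal'$ that preserves the apartment $\Acal'$ while putting the resulting basis-change matrix into the normal form required by Step 2.

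I would first apply Step 1 to obtain the permutation $\overline\sigma\in\mathfrak S_d$ satisfying $v(\det g)=v(g_{\overline\sigma(1)1}\cdots g_{\overline\sigma(d)d})$, so that in particular each diagonal entry $g_{\overline\sigma(i)i}$ is nonzero. I would then define a new basis $\Ecal'''=(e'''_1,\ldots,e'''_d)$ by simultaneously permuting and rescaling $\Ecal'$ via
\[
e'''_i := \frac{1}{g_{i,\overline\sigma^{-1}(i)}}\, e'_{\overline\sigma^{-1}(i)}.
\]
By Remark \ref{rmk:actiononapts}, $\Ecal'''$ spans the same line decomposition as $\Ecal'$, hence $f_{\Ecal'''}(\Abb^{d-1})=\Acal'$. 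A short computation shows that the basis-change matrix $\hat g$ from $\Ecal$ to $\Ecal'''$ has entries
\[
\hat g_{ij} = \frac{g_{i,\overline\sigma^{-1}(j)}}{g_{j,\overline\sigma^{-1}(j)}}.
\]
Consequently $\hat g_{ii}=1$ for all $i$, and the defining property of $\overline\sigma$ in Step 1 forces $v(\det\hat g)=0$, i.e.\ $|\det\hat g|_v=1$.

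I would then verify that $\hat g$ meets the hypotheses of Step 2: the diagonal product $\hat g_{11}\cdots\hat g_{dd}=1$ attains the minimum possible valuation $v(\det\hat g)=0$, so the identity is a valid choice of permutation in Step 1 applied to $\hat g$. Step 2 then yields $\Acal\cap\Acal' = \{[\eta]\in\Acal : \hat g.\eta = \eta\}$, and Step 3 (applicable since $|\det\hat g|_v=1$) describes this set as the image under $f_\Ecal$ of
\[
\{y\in\Abb^{d-1} : -v(\hat g_{ij}) \leq y_i - y_j \leq v(\hat g_{ji}),\ 1\leq i<j\leq d\}.
\]
Reindexing via $i=\overline\sigma(a)$ and $j=\overline\sigma(b)$, and noting that allowing both orderings of $a,b$ reproduces the same collection of inequalities (with upper and lower bounds swapped), yields exactly the second form of the set in the statement. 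The first form is then equivalent to the second via the routine identity
\[
v\!\left(\frac{g_{\overline\sigma(i)j}}{g_{\overline\sigma(j)j}}\right) = v\!\left(\frac{g_{\overline\sigma(i)j}}{g_{\overline\sigma(i)i}}\right) + v\!\left(\frac{g_{\overline\sigma(i)i}}{g_{\overline\sigma(j)j}}\right),
\]
which allows the translation $v(g_{\overline\sigma(i)i}/g_{\overline\sigma(j)j})$ to be moved from the variable side to the bounds.

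The main delicate point will be the justification that for $\hat g$ the identity can be taken as the permutation in Step 1---equivalently, that achieving the minimum diagonal-product valuation suffices for the norm-identity part of Step 1. Once that is verified, the remainder of the argument is organizational: checking the formula for $\hat g_{ij}$, invoking Remark \ref{rmk:actiononapts} to identify the apartment spanned by $\Ecal'''$, and carrying out the reindexing.
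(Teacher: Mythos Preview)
Your approach is essentially the same as the paper's: both reduce to the normalized situation of Steps~2--3 by absorbing the permutation $\overline\sigma$ and the diagonal scalars into one of the two bases. The paper modifies $\Ecal$ to $\overline\Ecal=(g_{\overline\sigma(j)j}\,e_{\overline\sigma(j)})_j$ and then has to convert $f_{\overline\Ecal}$ back to $f_\Ecal$ via Remark~\ref{rmk:actiononapts}; you modify $\Ecal'$ to $\Ecal'''$ instead, which lets you work with $f_\Ecal$ throughout and skip that final conversion. Your computation of $\hat g_{ij}$ and the reindexing are correct.

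One caution on the point you flag as delicate. You phrase the needed fact as ``achieving the minimum diagonal-product valuation suffices for the norm-identity part of Step~1.'' That is not what Step~1 asserts: it guarantees the \emph{existence} of a permutation satisfying both properties, not that every minimizing permutation satisfies the norm identity. The clean justification is instead to transport the norm identity directly. If $\eta$ is adapted to both $\Ecal$ and $\Ecal'$, then with $j=\overline\sigma^{-1}(i)$,
\[
\eta(e'''_i)=\frac{1}{\lvert g_{\overline\sigma(j)j}\rvert_v}\,\eta(e'_j)
=\frac{1}{\lvert g_{\overline\sigma(j)j}\rvert_v}\cdot \lvert g_{\overline\sigma(j)j}\rvert_v\,\eta(e_{\overline\sigma(j)})
=\eta(e_i),
\]
using Step~1 for $(\Ecal,\Ecal')$ in the middle equality. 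This shows the identity is a valid Step~1 permutation for $(\Ecal,\Ecal''')$, and the rest of your argument goes through. This is exactly how the paper's choice of $\overline\Ecal$ works as well: it is rigged so that $\eta(\bar e_j)=\eta(e'_j)$ by construction.
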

\begin{proof} Let $\overline\sigma$ be as in Step 1 and identify it with the permutation matrix in $\mathrm{GL}(d,\Fbb)$ it defines with respect to the basis $\Ecal$. The matrix 
\[
\overline{g}=\text{diag}(1/g_{\overline\sigma(1)1},\dots,1/g_{\overline\sigma(d)d})\cdot\overline\sigma\cdot g
\]
satisfies the hypothesis of Step 3 with respect to the bases $\overline{\Ecal}=(g_{\overline\sigma(j)j}e_{\overline{\sigma}(j)})$ and $\Ecal'$. Therefore, the intersection $\Acal\cap\Acal'$ is the image under the marking $f_{\overline{\Ecal}}$ of
\[
\{x\in\Abb^{d-1}\colon -v(\overline g_{ij})\leq x_i-x_j\leq v(\overline g_{ji})\text{ for }1\leq i,j\leq d\}.
\]
On the other hand, as $\overline\Ecal=(g_{\overline\sigma(1)1},\dots,g_{\overline\sigma(d)d})\cdot\overline\sigma\cdot\Ecal$, by Remark \ref{rmk:actiononapts} we have
\[
f_\Ecal(x_1,\dots,x_d)=f_{\overline\Ecal}(x_{\overline\sigma(1)}+\tilde{g}_{\overline\sigma(1)1},\dots, x_{\overline\sigma(d)}+\tilde{g}_{\overline\sigma(d)d}),
\] 
where $(\tilde{g}_{\overline\sigma(1)1},\tilde g_{\overline\sigma(2)2},\dots, \tilde{g}_{\overline\sigma(d)d})$ is the unique vector in $\mathbb V^{d-1}$ such that $\tilde{g}_{\overline\sigma(i)i}-\tilde{g}_{\overline\sigma(i+1)i+1}=v(g_{\overline\sigma(i)i})-v(g_{\overline\sigma(i+1)i+1})$. We end the proof by observing that $\overline g_{ij}=g_{\overline{\sigma}(i)j}/g_{\overline{\sigma}(i)i}$.
\end{proof}

%%%%%%%%%%%%%%%%%%%%%%%%%%%%%%%
\subsection{Flags and Endomorphisms in $\Fbb^d$}\label{ssec:flagsendo}
%%%%%%%%%%%%%%%%%%%%%%%%%%%%%%%

We now collect a few properties of asymptotic cones that will be needed in what follows. Equip the vector space $V=\Fbb^d$ with the sup-norm
\[
\|x_1e_1+\dots+x_de_d\|_\omega=\max_i{\absv{x_i}}
\]
where $e_1,e_2.\dots, e_d$ denotes the standard basis of $\Fbb^d$.
\begin{prop}\label{prop:ultraV} The pointed normed vector space $(\Fbb^d,\|\cdot\|_\omega,0)$ is isomorphic to the asymptotic cone
\[
\mathcal C_{\omega,\lambda}(\Rbb^d,\|\cdot\|,0)
\]
for the standard Euclidean norm $\|\cdot\|$ on $\Rbb^d$.
\end{prop}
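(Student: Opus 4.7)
The plan is to exhibit an explicit norm-preserving bijection $\Phi \colon \mathcal{C}_{\omega,\lambda}(\Rbb^d,\|\cdot\|,0) \to \Fbb^d$ defined coordinatewise, and to check that the sup-norm $\|\cdot\|_\omega$ on $\Fbb^d$ agrees with the asymptotic-cone distance. The main observation is that all norms on $\Rbb^d$ are bi-Lipschitz equivalent with constants depending only on $d$, and after raising to the power $1/\lambda_n$ with $\lambda_n \to \infty$ these constants disappear in the $\omega$-limit.

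First, I would define $\Phi$ as follows: given a representative sequence $(x_n) \in \prod_n \Rbb^d$ with $\lim_\omega \|x_n\|^{1/\lambda_n} < \infty$, write $x_n = (x_{1,n}, x_{2,n}, \dots, x_{d,n})$ and set
\[
\Phi\bigl([(x_n)]\bigr) := \bigl([(x_{1,n})], [(x_{2,n})], \dots, [(x_{d,n})]\bigr) \in \Fbb^d.
\]
The inequality $|x_{i,n}| \leq \|x_n\|$ for each coordinate $i$ guarantees that $\lim_\omega |x_{i,n}|^{1/\lambda_n} < \infty$, so each component defines a bona fide element of $\Fbb = \mathcal C_{\omega,\lambda}(\Rbb,|\cdot|,0)$. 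Well-definedness on equivalence classes is immediate from the same inequality applied to $x_n - y_n$.

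Next I would verify that $\Phi$ is an isometry in the strong sense that $\|\Phi([(x_n)])\|_\omega = \lim_\omega \|x_n\|^{1/\lambda_n}$. For this, use the sandwich
\[
\max_{1 \le i \le d} |x_{i,n}| \;\leq\; \|x_n\| \;\leq\; \sqrt{d}\,\max_{1 \le i \le d} |x_{i,n}|,
\]
raise everything to the power $1/\lambda_n$, and observe that $\sqrt{d}^{1/\lambda_n} \to 1$ since $\lambda_n \to \infty$. Since the $\omega$-limit of a maximum of finitely many sequences equals the maximum of the $\omega$-limits, I conclude
\[
\lim_\omega \|x_n\|^{1/\lambda_n} = \max_{i} \lim_\omega |x_{i,n}|^{1/\lambda_n} = \max_i \absv{[(x_{i,n})]} = \|\Phi([(x_n)])\|_\omega,
\]
which in particular gives injectivity and well-definedness simultaneously (and shows that $\Phi$ sends the basepoint to $0$).

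For surjectivity, given $(c_1,\dots,c_d) \in \Fbb^d$ with $c_i$ represented by $(c_{i,n})$, simply take $x_n := (c_{1,n},\dots,c_{d,n}) \in \Rbb^d$. The right-hand inequality in the sandwich above, together with the finiteness of each $\absv{c_i}$, forces $\lim_\omega \|x_n\|^{1/\lambda_n} \leq \sqrt{d}\,\max_i \absv{c_i} < \infty$, so $[(x_n)]$ lies in the asymptotic cone and maps to $(c_1,\dots,c_d)$ by construction. Linearity of $\Phi$ over $\Fbb$ is a direct unwinding of the definitions, so we obtain the claimed isomorphism of pointed normed $\Fbb$-vector spaces. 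The only substantive point, and the one that warrants explicit mention, is the vanishing of the dimension-dependent constants under the $1/\lambda_n$ rescaling; everything else is formal manipulation of $\omega$-limits.
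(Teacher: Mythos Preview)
Your argument is correct. The paper itself does not supply a proof of this proposition; it simply cites \cite[Prop.~3.12]{Par2}, so your coordinatewise construction together with the norm-equivalence sandwich is exactly the sort of argument one would expect to find behind that citation. One cosmetic remark: in your surjectivity step the bound $\lim_\omega \|x_n\|^{1/\lambda_n} \leq \sqrt{d}\,\max_i \absv{c_i}$ is true but not sharp, since the factor $\sqrt{d}^{1/\lambda_n}$ tends to $1$; you already used this vanishing in the isometry step, so for consistency you could drop the $\sqrt{d}$ here as well.
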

\begin{proof} Cf. \cite[Prop. 3.12]{Par2}.
\end{proof}

Proposition \ref{prop:ultraV} will allow us to study the asymptotic behavior of sequences of positive tuples of flags in $\Rbb^d$ in terms of the building $\Bcal_d$.

\begin{definition}\label{def:ultraW} Let $W_n$ be a sequence of $i$-dimensional vector subspaces in $\Rbb^d$. The $i$-dimensional subspace $W$ of $\Fbb^d$ is an \emph{ultralimit} for the sequence $W_n$ if the there exists 
\begin{itemize}
	\item[-] a sequence $(v_{1,n},v_{2,n},\dots, v_{i,n})$ of bases for $W_n$,
	\item[-] a basis $(v_1,v_2,\dots, v_i)$ of $W$
\end{itemize}
such that $\ulim v_{j,n}=v_j$ for all $j$.
\end{definition}

\begin{lem}\label{lem:ulimss} The sequence $W_n$ of $i$-dimensional vector subspaces in $\Rbb^d$ has a unique ultralimit $W$.
\end{lem}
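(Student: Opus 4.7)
The plan is to handle existence via orthonormal bases of $W_n$ (which remain bounded under ultralimits) and uniqueness by expanding any other candidate basis in those orthonormal vectors. For existence, first choose for each $n$ an orthonormal basis $(u_{1,n},\dots,u_{i,n})$ of $W_n$ with respect to the standard inner product on $\Rbb^d$. Since $\|u_{j,n}\|=1$ for every $n$ and $j$, Proposition~\ref{prop:ultraV} immediately gives that $u_j := \ulim u_{j,n}$ is a well-defined element of $\Fbb^d$ with $\|u_j\|_\omega\leq 1$. Set $W:=\mathrm{Span}_\Fbb(u_1,\dots,u_i)$; what remains is to show the $u_j$ are linearly independent over $\Fbb$, so that $\dim_\Fbb W = i$. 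For this I would invoke the Cauchy--Binet formula: letting $M_n$ denote the $d\times i$ real matrix with columns $u_{j,n}$, orthonormality forces $\sum_{|S|=i}\det(M_n[S])^2 = \det(M_n^T M_n) = 1$. By pigeonhole, some $i$-subset $S_n\subset\{1,\dots,d\}$ satisfies $|\det(M_n[S_n])|\geq\binom{d}{i}^{-1/2}$, and since the collection of such subsets is finite, the non-principal ultrafilter $\omega$ selects a fixed $S$ that coincides with $S_n$ for $\omega$-almost every $n$. Then $|\det(M_n[S])|$ lies in $[\binom{d}{i}^{-1/2},1]$, so its ultralimit has valuation zero in $\Fbb$ and is in particular nonzero. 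Interpreting this ultralimit as the $S$-Pl\"ucker coordinate of the $\Fbb$-span of $(u_j)$ shows that $(u_j)$ is $\Fbb$-linearly independent, hence a genuine basis of $W$.

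For uniqueness, suppose $W'\subset\Fbb^d$ is any other ultralimit of $(W_n)$, arising from a sequence of bases $(v'_{1,n},\dots,v'_{i,n})$ of $W_n$ whose entries ultraconverge to a basis $(v'_1,\dots,v'_i)$ of $W'$. Expanding each $v'_{j,n}\in W_n$ in the orthonormal basis gives
\[
v'_{j,n} = \sum_{k=1}^i \langle v'_{j,n},u_{k,n}\rangle\, u_{k,n},
\]
and Cauchy--Schwarz bounds $|\langle v'_{j,n},u_{k,n}\rangle|\leq\|v'_{j,n}\|$. By hypothesis $\|v'_{j,n}\|^{1/\lambda_n}$ is bounded, so each coefficient sequence defines an element $\alpha_{jk}\in\Fbb$. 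Passing to ultralimits yields $v'_j = \sum_k \alpha_{jk}u_k\in W$, whence $W'\subseteq W$. Equality follows immediately from $\dim_\Fbb W' = i = \dim_\Fbb W$.

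The main obstacle is the linear independence of the $(u_j)$ in $\Fbb^d$: orthonormality in $\Rbb^d$ does not automatically survive the ultralimit, and the combination of Cauchy--Binet (producing a uniformly bounded-below minor) with the finiteness of the set of $i$-subsets of $\{1,\dots,d\}$ (so that $\omega$ can single out one witness) is what secures it. Everything else is either a direct invocation of Proposition~\ref{prop:ultraV} or routine bookkeeping.
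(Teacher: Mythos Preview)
Your proof is correct and follows the same overall strategy as the paper: use orthonormal bases of $W_n$ for existence, then for uniqueness expand an arbitrary candidate basis in terms of a controlled one. The differences are in the technical implementation of two steps.

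For linear independence of the $u_j$, the paper argues directly by contradiction: if $u_l=\sum_{j\neq l}x_ju_j$ with $x_j=\ulim x_{j,n}$, then orthonormality gives $\|u_{l,n}-\sum_{j\neq l}x_{j,n}u_{j,n}\|^2=1+\sum_j x_{j,n}^2\geq 1$, so the rescaled norm cannot tend to zero. Your Cauchy--Binet/pigeonhole argument is a genuinely different device: it produces a specific $i\times i$ minor that is uniformly bounded away from zero and hence survives the ultralimit. This is slightly less elementary but more robust, since it yields a quantitative witness of independence (a nonzero Pl\"ucker coordinate) rather than a contradiction.

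For uniqueness, the paper compares two arbitrary ultralimits $W$ and $W'$ by double inclusion, asserting that the change-of-basis coefficients lie in $\Fbb$ because the limiting vectors are nonzero. Your version is tighter: you compare $W'$ only against the orthonormal ultralimit $W$, so the coefficients are inner products and Cauchy--Schwarz immediately bounds them by $\|v'_{j,n}\|$. This makes the passage to $\Fbb$ transparent, and equality of dimensions replaces the symmetric inclusion argument.
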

\begin{proof} Existence of the ultralimit is obtained by choosing an orthonormal basis $(v_{1,n},v_{2,n},\dots, v_{i,n})$ for each $W_n$ and considering the ultralimits of these vectors. In fact, as each $v_{j,n}$ has constant norm equal to one, by definition of the asymptotic cone $v_{j,n}$ has a non-zero ultralimit $v_j$. We show by contradiction that the vectors $v_j$ are independent. Suppose there exists $1\leq l\leq i$ such that $v_l=\sum_{j\neq l} x_jv_j$ in $\Fbb^d$. There exist sequences of real numbers $(x_{j,n})$ such that $x_j=\ulim x_{j,n}\in\Fbb$. It follows that
\[
\left(1+\sum_jx^2_{j,n}\right)^{1/\lambda_n}=\left\|v_{l,n}-\sum_{j\neq l}x_{j,n}v_{j,n}\right\|^{1/\lambda_n}\geq 1
\]
has ultralimit equal to zero, which is a contradiction.

Let us now prove the uniqueness of the ultralimit $W$ of the sequence $W_n$. Suppose, by contradiction, that there exist two ultralimits $W$ and $W'$ for $W_n$ obtained by considering the sequences of bases $v_{j,n}$ and $v'_{j,n}$ of $W_n$ with ultralimits $v_j\in W$ and $v_j'\in W'$. Write $v'_{j,n}=\sum_k x_{k,j,n}v_{k,n}$. As we know that the ultralimits of $v_{j,n}$ and $v'_{j,n}$ are non-zero vectors in $\Fbb^d$, it follows that the ultralimits $x_{k,j}$ of the sequences $x_{k,j,n}$ are elements in $\Fbb$. In particular, we have that $v'_j=\sum_k x_{k,j}v_k$ and $v'_j$ belongs to $W$. As this holds for every $v_j'$, it follows that $W'\subset W$. We obtain the reverse inclusion $W'\supset W$ analogously, therefore $W=W'$ as needed.
\end{proof}

\begin{definition} Let $F_n$ be a sequence of flags in $\Rbb^d$. The flag $F$ in $\Fbb^d$ is the \emph{ultralimit} of the sequence $F_n$ of real flags if the there exist a sequence $(v_{1,n},v_{2,n},\dots, v_{d,n})$ of bases of $\Rbb^d$ and a basis $(v_1,v_2,\dots,v_d)$ of $\Fbb^d$ such that:
\begin{itemize}
	\item[-] for each $i$, the sequence of vectors $v_{i,n}$ in $\Rbb^d$ converges to the non-zero vector $v_i$ in $\Fbb^d$;
	\item[-] for each $n$, the sequence of $i$-dimensional vector subspaces $F_n^{(i)}=\mathrm{Span}(v_{1,n},v_{2,n},\dots, v_{i,n})$ converges to the vector subspace $F^{(i)}=\mathrm{Span}(v_1,v_2,\dots, v_i)$.
\end{itemize} 
\end{definition}

\begin{lem}\label{lem:ulimflag} Let $F_n$ be a sequence of flags in $\Rbb^d$. Then, there exists a unique flag $F$ in $\Fbb^d$ such that $F$ is the ultralimit of the sequence $F_n$.
\end{lem}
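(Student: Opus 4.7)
My plan is to reduce the existence statement to Lemma \ref{lem:ulimss} by choosing, for each $n$, an orthonormal basis of $\Rbb^d$ that is adapted to the flag $F_n$, and then invoking Lemma \ref{lem:ulimss} subspace by subspace for uniqueness.

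First, for existence, I will choose for each $n$ an orthonormal basis $(v_{1,n}, v_{2,n}, \dots, v_{d,n})$ of $\Rbb^d$ such that $F_n^{(i)} = \mathrm{Span}(v_{1,n}, \dots, v_{i,n})$ for every $i$ (e.g.\ by applying Gram-Schmidt to any basis adapted to $F_n$). Because each $v_{i,n}$ has Euclidean norm equal to $1$, Proposition \ref{prop:ultraV} guarantees that its ultralimit $v_i \in \Fbb^d$ is a well-defined nonzero vector. Moreover, exactly the argument already used in the proof of Lemma \ref{lem:ulimss} shows that if $v_l = \sum_{j\neq l} x_j v_j$ were a nontrivial linear relation with $x_j = \ulim x_{j,n}$, then the sequence $\|v_{l,n} - \sum_{j\neq l} x_{j,n} v_{j,n}\|^{1/\lambda_n} \geq (1 + \sum_j x_{j,n}^2)^{1/(2\lambda_n)} \geq 1$ would have ultralimit $0$, a contradiction. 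Hence $(v_1, v_2, \dots, v_d)$ is a basis of $\Fbb^d$, and I define the flag $F$ by $F^{(i)} := \mathrm{Span}(v_1, \dots, v_i)$. By construction the bases $(v_{1,n}, \dots, v_{d,n})$ and $(v_1, \dots, v_d)$ witness that $F$ is an ultralimit of the sequence $F_n$.

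For uniqueness, suppose $F$ and $F'$ are both ultralimits of $F_n$, obtained respectively from sequences of bases $(v_{j,n})$ and $(v_{j,n}')$ of $\Rbb^d$ with ultralimits $(v_j)$ and $(v_j')$ forming bases of $\Fbb^d$. For each fixed $i$, the sequence $(v_{1,n}, \dots, v_{i,n})$ is a basis of $F_n^{(i)}$ with nonzero ultralimits $(v_1, \dots, v_i)$ forming a basis of $F^{(i)}$; thus $F^{(i)}$ is an ultralimit of $F_n^{(i)}$ in the sense of Definition \ref{def:ultraW}, and similarly for $F'^{(i)}$. The uniqueness half of Lemma \ref{lem:ulimss} forces $F^{(i)} = F'^{(i)}$ for every $i$, and hence $F = F'$.

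The only step that requires any real thought is showing that the ultralimits $v_1, \dots, v_d$ of the chosen orthonormal vectors remain $\Fbb$-linearly independent in $\Fbb^d$; this is exactly the argument recycled from Lemma \ref{lem:ulimss}, and choosing orthonormal (rather than arbitrary adapted) bases is what makes it go through uniformly in $n$. Everything else is formal bookkeeping from the definition of ultralimit of a flag together with Lemma \ref{lem:ulimss}.
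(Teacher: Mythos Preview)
Your proposal is correct and follows essentially the same approach as the paper, which simply says ``apply Lemma~\ref{lem:ulimss} to each sequence of $i$-dimensional subspaces $F_n^{(i)}$.'' You have spelled out the one point the paper leaves implicit: choosing a \emph{single} orthonormal basis of $\Rbb^d$ adapted to the whole flag $F_n$ (rather than separate bases for each $F_n^{(i)}$) so that the resulting ultralimit vectors form a basis of $\Fbb^d$ adapted to the limiting flag, as the definition requires.
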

\begin{proof} The proof follows by applying Lemma \ref{lem:ulimss} to each sequence of $i$-dimensional subspaces $F_n^{(i)}$.
\end{proof}

The algebra $\mathrm{End}(V)$ of endomorphisms of $V=\Fbb^d$ can also be identified with an asymptotic cone. Observe that the norm $\|\cdot\|_\omega$ on $V=\Fbb^d$ induces an operator norm $N_\omega$ on $\mathrm{End}(V)$. 

\begin{prop}\label{prop:ulimend} The pointed normed algebra $(\mathrm{End}(V),N_\omega,\mathrm{Id})$ is isomorphic to the asymptotic cone
\[
\mathcal C_{\omega,\lambda}(\mathrm{End}(\Rbb^d),N,\mathrm{Id})
\]
where $N$ is the operator norm induced by the Euclidean norm $\|\cdot\|$ on $\Rbb^d$. 

Furthermore, let us identify $\mathrm{End}(V)$ and $\mathrm{End}(\Rbb^d)$ with the spaces of matrices $M(d,\Fbb)$ and $M(d,\Rbb)$ via the standard bases. Suppose that $M=(m_{ij})\in M(d,\Fbb)$ is the ultralimit of a sequence of matrices $M_n=((m_{ij})_n)\in M(d,\Rbb)$. Then, for every $i$ and $j$, we have that $m_{ij}=\ulim (m_{ij})_n$

Finally, the group $\GL(V)$ of invertible isomorphisms in $\mathrm{End}(V)$ is identified with the set of ultralimits of sequences $(g_n)\in\mathrm{End}(\Rbb^d)$ such that $g_n\in\GL(\Rbb^d)$ for $\omega$-almost every $n$ and $\lim_\omega N(g^{-1}_n)^{1/\lambda_n}<+\infty$.
\end{prop}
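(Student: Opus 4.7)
The plan is to reduce to Proposition \ref{prop:ultraV} by viewing $\mathrm{End}(\Rbb^d)\cong \Rbb^{d^2}$ as a finite-dimensional real vector space. Since all norms on a finite-dimensional space are bilipschitz equivalent, there exist $0 < c\le C$ with $c\max_{ij}|m_{ij}| \le N(M)\le C\max_{ij}|m_{ij}|$ for every matrix $M=(m_{ij})\in M(d,\Rbb)$. Raising to the $1/\lambda_n$-th power and taking $\omega$-limits, a sequence $(M_n)$ represents an element of the asymptotic cone if and only if each entry sequence $((m_{ij})_n)$ defines an element of $\Fbb$, and two such sequences are equivalent if and only if their entries are equivalent in $\Fbb$. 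This produces a bijection between $\mathcal{C}_{\omega,\lambda}(\mathrm{End}(\Rbb^d),N,\mathrm{Id})$ and $M(d,\Fbb)\cong\mathrm{End}(\Fbb^d)$ that sends the class of $(M_n)$ to the matrix with entries $\ulim (m_{ij})_n$, proving part $2$ and matching the basepoint $\mathrm{Id}$. That this bijection is an isometry (for $N_\omega$ on the target and the ultralimit metric on the source) follows similarly, using bilipschitz equivalence together with the observation that $N_\omega(M)=\lim_\omega N(M_n)^{1/\lambda_n}$, a fact one verifies by taking vectors realizing the operator norm and applying Proposition \ref{prop:ultraV}.

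The subtler point, and where I expect the main technical work, is promoting this to an isomorphism of algebras. Well-definedness of composition on the asymptotic cone follows from submultiplicativity of $N$: if $(A_n)\sim(A'_n)$ and $(B_n)\sim(B'_n)$ both have bounded class in the asymptotic cone, the identity $A_n B_n - A'_n B'_n = A_n(B_n-B'_n)+(A_n-A'_n)B'_n$ combined with $N(XY)\le N(X)N(Y)$ yields $\lim_\omega N(A_nB_n-A'_nB'_n)^{1/\lambda_n}=0$. That the induced multiplication in $M(d,\Fbb)$ is the usual matrix multiplication is then an entrywise check, since addition and multiplication in $\Fbb$ are defined on representatives by addition and multiplication in $\Rbb$.

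For part $3$, one direction is immediate: if $g_n\in \GL(\Rbb^d)$ for $\omega$-almost every $n$ with $\lim_\omega N(g_n^{-1})^{1/\lambda_n}<\infty$, then $(g_n^{-1})$ represents an element $h\in\mathrm{End}(V)$, and $g_ng_n^{-1}=\mathrm{Id}$ passes to the ultralimit to give $gh=\mathrm{Id}$, so $g\in\GL(V)$. For the converse, let $g\in\GL(V)$ with inverse $h\in \mathrm{End}(V)$, and pick arbitrary representatives $(g_n)$ and $(h_n)$. From $gh=\mathrm{Id}$ we get $\lim_\omega N(g_nh_n-\mathrm{Id})^{1/\lambda_n}=0$, so $N(g_nh_n-\mathrm{Id})<1/2$ for $\omega$-almost every $n$. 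A Neumann series argument then yields $(g_nh_n)^{-1}$ with $N((g_nh_n)^{-1})\le 2$, and the same reasoning applies to $h_ng_n$, so $g_n$ is invertible for $\omega$-almost every $n$ with $g_n^{-1}=h_n(g_nh_n)^{-1}$ satisfying $N(g_n^{-1})\le 2N(h_n)$. Taking $1/\lambda_n$-th powers and $\omega$-limits gives $\lim_\omega N(g_n^{-1})^{1/\lambda_n}\le \lim_\omega N(h_n)^{1/\lambda_n}<\infty$, as required.
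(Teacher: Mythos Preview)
Your argument is correct. The paper does not actually prove this proposition: it simply refers the reader to \cite[Prop.~3.17, Cor.~3.18]{Par2} and \cite[Prop.~5.1]{Par4}. Your write-up therefore supplies a self-contained proof where the paper gives none. The reduction via bilipschitz equivalence of norms on $\Rbb^{d^2}$ (so that constants vanish after raising to the $1/\lambda_n$ and taking $\omega$-limits) is the standard mechanism and matches what one finds in Parreau's papers; your verification that $N_\omega(M)=\lim_\omega N(M_n)^{1/\lambda_n}$ by testing on vectors realizing the operator norm, together with the submultiplicativity argument for the algebra structure, is clean. The Neumann-series step for the $\GL$ part is a nice touch that makes the converse direction explicit and independent of representative choice.
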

\begin{proof} Cf. \cite[Prop. 3.17, Cor. 3.18]{Par2} and \cite[Prop. 5.1]{Par4}.
\end{proof}

%%%%%%%%%%%%%%%%%%%%%%%%%%%%%%%
%%%%%%%%%%%%%%%%%%%%%%%%%%%%%%%
\section{Positivity in $\mathcal B_d$}\label{sec:posinBd}
%%%%%%%%%%%%%%%%%%%%%%%%%%%%%%%
%%%%%%%%%%%%%%%%%%%%%%%%%%%%%%%
%
%%%%%%%%%%%%%%%%%%%%%%%%%%%%%%%%
%\subsection{Positivity in $\GL(d,\Fbb)$}\label{sec:posconf}
%%%%%%%%%%%%%%%%%%%%%%%%%%%%%%%%

Recall that we fixed a non-principal ultrafilter $\omega$ and a scaling sequence $\lambda=(\lambda_n)$. This allows us to consider the asymptotic cone $\Fbb$ of the real numbers $\Rbb$ with base point $0$ and distance given by the absolute value. Every element in $\Fbb$ is an equivalence class of sequences of real numbers. Therefore, the field $\mathbb F$ is naturally equipped with an order by setting
\[
[x_n]\geq [y_n] \text{ if } x_n\geq y_n\ \omega\text{-a.e.}.
\]
The set $\mathbb F_{\geq 0}=\{x\in\mathbb F\colon x\geq 0\}$ is a semifield with respect to the operations in $\mathbb F$ and it contains $\Rbb_{\geq 0}$. Set $\Fbb_{>0}:=\Fbb_{\geq 0}-\{0\}$. Total nonnegativity and total positivity can be defined naturally for elements in $\GL(d,\Fbb)$ as follows.

\begin{definition}\label{def:gentotpos} An element $M\in\GL(d,\Fbb)$ is \emph{totally positive} if all of its minors belong to $\Fbb_{> 0}$. The matrix $M\in\GL(d,\Fbb)$ is \emph{totally nonnegative} if all of its minors are in $\Fbb_{\geq 0}$.
\end{definition}

%%%%%%%%%%%%%%%%%%%%%%%%%%%%%%%%
\subsection{Positivity and intersections}\label{ssec:positivityandintersections}
%%%%%%%%%%%%%%%%%%%%%%%%%%%%%%%%

The main goal of this section is to show how total nonnegativity can be used to simplify the problem of parametrizing the intersection of two apartments $\mathcal A$ and $\Acal'$ in the $\Rbb$-Euclidean building $\Bcal_d$. More precisely, assume that $\Acal\cap\Acal'$ is non-empty. Proposition \ref{prop:intersection} states that, in general, the intersection of these two apartments is described by $d(d-1)$ inequalities. Corollary \ref{cor:posintersection} below shows that $2(d-1)$ inequalities suffice when $\Acal$ and $\Acal'$ are related by a totally nonnegative matrix. 

We will need the following technical lemmas.
\begin{lem}\label{lem:posvaluation} For any $x,y$ in $\Fbb_{\geq 0}$, we have that $v(x+y)=\min\{v(x),v(y)\}$.
\end{lem}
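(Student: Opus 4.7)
The plan is to combine the general valuation inequality $v(x+y)\geq\min\{v(x),v(y)\}$ (which is part of the definition of a valuation and hence holds automatically) with the reverse inequality $v(x+y)\leq\min\{v(x),v(y)\}$, which is the content of the lemma under the nonnegativity hypothesis. The whole point is that nonnegativity rules out cancellation in the sum, so the absolute value behaves monotonically rather than possibly dropping.

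More concretely, I would pick representatives $x=[x_n]$ and $y=[y_n]$ with $x_n,y_n\geq 0$ for $\omega$-almost every $n$, which is possible by the definition of the order on $\Fbb$. Then for $\omega$-a.e.\ $n$ one has the pointwise inequalities
\[
|x_n+y_n| \;=\; x_n+y_n \;\geq\; \max\{x_n,y_n\} \;=\; \max\{|x_n|,|y_n|\}.
\]
Raising to the power $1/\lambda_n$, taking logarithms, and then taking $\omega$-limits (using that $\omega$-limits preserve weak inequalities and that $\log$ is monotone) yields
\[
-v(x+y) \;=\; \lim_\omega \log|x_n+y_n|^{1/\lambda_n} \;\geq\; \max\{-v(x),-v(y)\},
\]
which is exactly $v(x+y)\leq \min\{v(x),v(y)\}$.

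Combining this with the opposite inequality $v(x+y)\geq\min\{v(x),v(y)\}$ that holds in any valued field, the conclusion follows.

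I do not anticipate a genuine obstacle here. The only mild subtlety is making sure the representatives can be chosen pointwise nonnegative $\omega$-a.e.; this is immediate from the definition of the order on the asymptotic cone $\Fbb$, since by hypothesis $x\geq 0$ in $\Fbb$ means $x_n\geq 0$ for $\omega$-a.e.\ $n$ for some (equivalently, any) representative sequence. Everything else reduces to elementary monotonicity of the $\omega$-limit, so the proof should be only a few lines.
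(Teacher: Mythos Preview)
Your argument is correct. The paper does not actually prove this lemma; it simply cites \cite[Prop.~3.2.1]{Par2}. Your self-contained verification---using pointwise nonnegative representatives to get $|x_n+y_n|\geq\max\{|x_n|,|y_n|\}$, then passing to the $\omega$-limit of $\log(\cdot)^{1/\lambda_n}$ and combining with the valuation axiom $v(x+y)\geq\min\{v(x),v(y)\}$---is the natural direct proof and almost certainly coincides with what the cited reference does. One tiny wording issue: the phrase ``some (equivalently, any) representative sequence'' is slightly imprecise, since not every representative of a nonnegative element need be pointwise nonnegative $\omega$-a.e.; but the existence of one such representative (which is all you use) is immediate from the definition of the order, so this does not affect the argument.
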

\begin{proof} Cf. \cite[Prop. 3.2.1.]{Par2}.
\end{proof}

\begin{lem}\label{lem:wellbehaved} Suppose $y,x-y$ are in $\Fbb_{\geq 0}$ and $y$ is different from zero. Then, $v(xy^{-1})\leq 0$.
\end{lem}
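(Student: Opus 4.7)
The plan is to reduce this to a direct application of Lemma \ref{lem:posvaluation}. Note first that since $y \in \Fbb_{\geq 0}$ and $x - y \in \Fbb_{\geq 0}$, the element $x = (x-y) + y$ is a sum of two nonnegative elements of $\Fbb$, hence $x \in \Fbb_{\geq 0}$ as well. In particular, the valuations $v(x)$ and $v(y)$ make sense (with $v(y) \neq \infty$ since $y \neq 0$).

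Now I would apply Lemma \ref{lem:posvaluation} to the decomposition $x = (x-y) + y$. Since both summands lie in $\Fbb_{\geq 0}$, the lemma yields
\[
v(x) = v\bigl((x-y) + y\bigr) = \min\{v(x-y), v(y)\} \leq v(y).
\]
Using the multiplicativity of the valuation and the fact that $v(y) < \infty$, this gives
\[
v(xy^{-1}) = v(x) - v(y) \leq 0,
\]
as desired.

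There is no real obstacle here: the statement is essentially an immediate corollary of Lemma \ref{lem:posvaluation}, and the only minor point to verify is that the hypotheses place us in the nonnegative cone so that Lemma \ref{lem:posvaluation} is applicable.
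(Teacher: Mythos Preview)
Your proof is correct and follows essentially the same route as the paper: both apply Lemma~\ref{lem:posvaluation} to a sum of two elements of $\Fbb_{\geq 0}$, the only cosmetic difference being that the paper first divides by $y$ and writes $xy^{-1} = (x-y)y^{-1} + 1$ before invoking the lemma, whereas you apply it to $x = (x-y) + y$ and divide afterward.
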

\begin{proof} As $\Fbb_{\geq 0}$ is a semifield and $y\neq 0$, we know that $xy^{-1}-1=(x-y)y^{-1}\in\Fbb_{\geq 0}$. Thus,
\begin{align*}
v(xy^{-1})&=v((xy^{-1}-1)+1)\\
&=\min\{v(xy^{-1}-1),0\}\leq 0.
\end{align*}
where the second equality follows from Lemma \ref{lem:posvaluation}, observing that $1\in\Fbb_{\geq 0}$ and $v(1)=0$.
\end{proof}

\begin{prop}\label{prop:positiveintersection} Let $M=(m_{ij})$ be a matrix in $\GL(d,\Fbb)$ with $\absv{\det M}=1$ and $m_{ii}=1$. Consider the sets
\begin{gather*}
\mathcal{I}_M=\{x\in \Abb^{d-1}\colon -v(m_{ij})\leq x_i-x_j\leq v(m_{ji}) \text{ for } 1\leq i<j\leq d\};\\
\mathcal{I}^+_M=\{x\in \Abb^{d-1}\colon -v(m_{i,i+1})\leq x_i-x_{i+1}\leq v(m_{i+1,i})\text{ for } 1\leq i<d\}.
\end{gather*}
If $M$ is totally nonnegative, then $\mathcal I_M=\mathcal I^+_M$.
\end{prop}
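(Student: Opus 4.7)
The inclusion $\mathcal{I}_M \subseteq \mathcal{I}^+_M$ is immediate, since $\mathcal{I}^+_M$ is defined by a subcollection of the inequalities defining $\mathcal{I}_M$ (namely those with $j = i+1$). The content of the proposition is the reverse inclusion: when $M$ is totally nonnegative, the adjacent constraints automatically imply all the others. My plan is to reduce this to a numerical inequality on the valuations, and then deduce that inequality from total nonnegativity via the $2\times 2$ minors involving the diagonal.

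The reduction goes as follows. Suppose $x \in \mathcal{I}^+_M$ and fix $1 \leq i < j \leq d$. Summing the adjacent inequalities gives
\[
-\sum_{k=i}^{j-1} v(m_{k,k+1}) \ \leq\ x_i - x_j \ \leq\ \sum_{k=i}^{j-1} v(m_{k+1,k}).
\]
So to conclude that $-v(m_{ij}) \leq x_i - x_j \leq v(m_{ji})$, it suffices to prove the two "telescoping" bounds
\begin{equation}\label{eqn:planKey}
v(m_{ij}) \ \geq\ \sum_{k=i}^{j-1} v(m_{k,k+1}) \qquad \text{and} \qquad v(m_{ji}) \ \geq\ \sum_{k=i}^{j-1} v(m_{k+1,k}).
\end{equation}

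The key step, which I expect to be the heart of the argument, is to prove the basic three-term inequality: for any $1 \leq i < k < j \leq d$,
\[
v(m_{ij}) \ \geq\ v(m_{ik}) + v(m_{kj}),
\]
and symmetrically $v(m_{ji}) \geq v(m_{ki}) + v(m_{jk})$. The inequalities \eqref{eqn:planKey} then follow by induction on $j - i$, taking $k = i+1$. To prove the three-term inequality, I consider the $2\times 2$ minor of $M$ with row indices $\{i,k\}$ and column indices $\{k,j\}$, whose determinant equals
\[
m_{ik} m_{kj} - m_{ij} m_{kk} \ =\ m_{ik} m_{kj} - m_{ij},
\]
using $m_{kk} = 1$. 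Since $M$ is totally nonnegative, this quantity lies in $\Fbb_{\geq 0}$, and of course $m_{ij}$ itself lies in $\Fbb_{\geq 0}$ as a $1\times 1$ minor. When $m_{ij} \neq 0$, Lemma \ref{lem:wellbehaved} applied with $x = m_{ik} m_{kj}$ and $y = m_{ij}$ yields $v(m_{ik} m_{kj}) - v(m_{ij}) \leq 0$, which is exactly the required bound. When $m_{ij} = 0$, the bound holds trivially since $v(m_{ij}) = +\infty$. The second three-term inequality is obtained by applying the same argument to the minor with row indices $\{k,j\}$ and column indices $\{i,k\}$, whose determinant is $m_{ki} m_{jk} - m_{ji}$.

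Once the three-term inequality is in hand, iterating it ($k = i+1$, then $k = i+2$ inside $v(m_{i+1,j})$, and so on) produces the telescoping bounds \eqref{eqn:planKey}, which combined with the summed adjacent inequalities completes the proof that $\mathcal{I}^+_M \subseteq \mathcal{I}_M$. The main obstacle I anticipate is purely bookkeeping: making sure the convention on ordered index sets for minors is consistent, and cleanly handling the degenerate case where some $m_{ij}$ vanishes (where $v(m_{ij}) = +\infty$ and the relevant inequalities are vacuous).
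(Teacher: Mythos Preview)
Your proof is correct and follows essentially the same approach as the paper: both reduce to showing $v(m_{i+1,i}) + v(m_{j,i+1}) \leq v(m_{ji})$ (and its upper-triangular analogue) via the $2\times 2$ minor $m_{i+1,i}m_{j,i+1} - m_{ji} \geq 0$ and Lemma~\ref{lem:wellbehaved}, then iterate. The only cosmetic difference is that you first isolate the valuation inequality (your ``three-term inequality'') and then combine it with the summed adjacent constraints, whereas the paper carries the $x$-variables along in a single induction on $j-i$.
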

\begin{proof} It is clear that $\mathcal I_M\subseteq \mathcal I_M^+$. We show that if $x\in\mathcal I_M^+$, then $x_i-x_{i+k}\leq v(m_{i+k,i})$ by induction on $k\geq 2$. We omit the proof of the inequality $x_i-x_j\geq -v(m_{ij})$ as it is very similar. 

For $k=2$, we want to show $x_i-x_{i+2}\leq v(m_{i+2,i})$. Namely, we focus on the sub-matrix
\[
\begin{pmatrix}
 1 & \star & \star \\
 m_{i+1,i} & 1 &\star \\
 m_{i+2,i} & m_{i+2,i+1} & 1
\end{pmatrix}
\]
If $m_{i+2,i}=0$ there is nothing prove as $v(m_{i+2,i})=\infty$. Assume $m_{i+2,i}\neq 0$. Total nonnegativity of $M$ implies that $m_{i+2,i}$ and $m_{i+1,i}m_{i+2,i+1}-m_{i+2,i}$ are in $\Fbb_{\geq 0}$. This implies that $m_{i+1,i}$ and $m_{i+2,i+1}$ are non-zero. Therefore, the valuations of $m_{i+1,i}$ and $m_{i+2,i+1}$ are finite. We apply Lemma \ref{lem:wellbehaved} with $x=m_{i+1,i}m_{i+2,i+1}$ and $y=m_{i+2,i}$ to obtain
\begin{gather*}
v\left(\frac{m_{i+1,i}m_{i+2,i+1}}{m_{i+2,i}}\right)\leq 0\ \ \  \Longleftrightarrow \ \ \ v(m_{i+1,i})+v(m_{i+2,i+1})\leq v(m_{i+2,i}).
\end{gather*}
Thus, if $x\in \mathcal{I}^+_M$ we have
\[
x_i-x_{i+2}=x_i-x_{i+1}+x_{i+1}-x_{i+2}\leq v(m_{i+1,i})+v(m_{i+2,i+1})\leq v(m_{i+2,i})
\]
which proves the base case for the induction. Assume that for $x\in \mathcal I_M^+$ we know that $x_i-x_{i+l}\leq v(m_{i+l,i})$ whenever $l<k$. If $m_{i+k,i}=0$, the inequality $x_i-x_{i+k}\leq v(m_{i+k,i})=\infty$ is obvious. Thus, let us assume $m_{i+k,i}\neq 0$. We obtain the desired inequality
\[
x_i-x_{i+k}=x_i-x_{i+1}+x_{i+1}-x_{i+k}\leq v(m_{i+1,i})+v(m_{i+k,i+1})\leq v(m_{i+k,i})
\]
by using the induction hypothesis for the inequality $x_{i+1}-x_{i+k}\leq v(m_{i+k,i+1})$ and applying Lemma \ref{lem:wellbehaved} with $x=m_{i+1,i}m_{i+k,i+1}$ and $y=m_{i+k,i}$.
\end{proof}

We specialize Proposition \ref{prop:positiveintersection} to the case of upper triangular matrices for future reference. 

\begin{cor}\label{cor:uppertriangular} Let $M=(m_{ij})$ be an upper triangular matrix in $\GL(d,\Fbb)$ and consider the sets:
\begin{gather*}
\mathcal{I}_M=\left\{x\in \Abb^{d-1}\colon x_i-x_j+v\left(\frac{m_{ii}}{m_{jj}}\right)\geq -v\left(\frac{m_{ij}}{m_{ii}}\right) \text{ for } 1\leq i<j\leq d\right\};\\
\mathcal{I}^+_M=\left\{x\in \Abb^{d-1}\colon  x_i-x_{i+1}+v\left(\frac{m_{ii}}{m_{i+1,i+1}}\right)\geq -v\left(\frac{m_{i,i+1}}{m_{ii}}\right)\text{ for } 1\leq i<d\right\}.
\end{gather*}
If $M$ is totally nonnegative, then $\mathcal I_M=\mathcal I^+_M$.
\end{cor}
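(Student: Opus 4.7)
The plan is to reduce the claim to Proposition \ref{prop:positiveintersection} by a diagonal normalization and a change of coordinates. First, since $M$ is upper triangular and belongs to $\GL(d,\Fbb)$, the diagonal entries $m_{11},\dots,m_{dd}$ are non-zero; total nonnegativity then forces $m_{ii}\in\Fbb_{>0}$. Set $D=\mathrm{diag}(m_{11},\dots,m_{dd})$ and $N:=D^{-1}M$. Because left multiplication by a diagonal matrix with entries in $\Fbb_{>0}$ only scales rows by positive factors, it preserves the signs of all minors, so $N$ is totally nonnegative; it is also upper triangular with $n_{ii}=1$ and $\det N=1$, in particular $\absv{\det N}=1$. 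Moreover $n_{ij}=m_{ij}/m_{ii}$ for $i<j$.

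Next I would apply Proposition \ref{prop:positiveintersection} directly to $N$. Since $N$ is upper triangular, for $i<j$ we have $n_{ji}=0$ and hence $v(n_{ji})=\infty$, so the upper-bound inequalities $x_i-x_j\leq v(n_{ji})$ in both $\mathcal I_N$ and $\mathcal I_N^+$ are vacuous. The proposition therefore reduces to the statement that for totally nonnegative upper triangular $N$ with unit diagonal, the set
\[
\{x\in\Abb^{d-1}: x_i-x_j\geq -v(n_{ij})\ \text{for all}\ 1\leq i<j\leq d\}
\]
coincides with its analog where only consecutive indices $j=i+1$ are used.

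To translate this into the form stated in the corollary, I would substitute $y_i:=x_i+v(m_{ii})$, so that $y_i-y_j=x_i-x_j+v(m_{ii}/m_{jj})$. Under this substitution the defining inequalities of $\mathcal I_M$ and $\mathcal I_M^+$ become exactly
\[
y_i-y_j\geq -v(n_{ij})\quad\text{and}\quad y_i-y_{i+1}\geq -v(n_{i,i+1}),
\]
respectively. The previous paragraph shows that these two systems of inequalities define the same subset of the affine hyperplane parametrizing $y$, so pulling back by the bijective affine shift $x\mapsto y$ yields $\mathcal I_M=\mathcal I_M^+$.

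I don't anticipate a substantive obstacle: the only things that must be checked carefully are that $D^{-1}M$ inherits total nonnegativity (which follows because $m_{ii}\in\Fbb_{>0}$ and the product of totally nonnegative matrices is totally nonnegative), and that the upper-bound inequalities in Proposition \ref{prop:positiveintersection} become vacuous for upper triangular $N$. Both reductions are routine, so the content of the corollary is essentially a bookkeeping restatement of Proposition \ref{prop:positiveintersection} in the upper-triangular setting.
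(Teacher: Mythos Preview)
Your proposal is correct and follows essentially the same route as the paper: both normalize by the diagonal $D^{-1}=\mathrm{diag}(1/m_{11},\dots,1/m_{dd})$ to reduce to a unit-diagonal totally nonnegative upper triangular matrix and then invoke Proposition~\ref{prop:positiveintersection}. Your write-up simply spells out the ``easy algebraic manipulation'' the paper alludes to (the substitution $y_i=x_i+v(m_{ii})$ and the vacuity of the upper bounds since $n_{ji}=0$ for $i<j$).
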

\begin{proof} As the determinant of $M$ is non-zero, we can multiply $M$ by the totally nonnegative diagonal matrix $S=\mathrm{diag}(1/m_{11},\dots,1/m_{dd})$. The matrix $M'=SM=(m_{ij}')$ is totally nonnegative, $\absv{\det M'}=1$ and its diagonal entries are equal to 1. Therefore, we conclude by applying Proposition \ref{prop:positiveintersection} to $M'$ and performing an easy algebraic manipulation.
\end{proof}

\begin{remark}
In the statement of Corollary \ref{cor:uppertriangular}, the expressions for the inequalities defining the sets $\mathcal{I}_M$ and $\mathcal{I}^+_M$ can be simplified by subtracting $v(m_{ii})$ on both sides. However, we wish to not do so as these two terms play different r\^oles when considering intersections of apartments in \S \ref{sec:main}. 
\end{remark}

\begin{cor}\label{cor:posintersection} Let $\Acal$ and $\Acal'$ be apartments in $\Bcal_d$. Assume that there exist bases $\Ecal$ and $\Ecal'$ of the $\Fbb$-vector space $V$ such that
\begin{itemize}
	\item[-] $\Acal=f_{\Ecal}(\Abb^{d-1})$ and $\Acal'=f_{\Ecal'}(\Abb^{d-1})$ where $f_{\Ecal}$ and $f_{\Ecal'}$ are the standard marking of the bases $\Ecal$ and $\Ecal'$, respectively;
	\item[-] the matrix $(g_{ij})$ in the basis $\Ecal$ corresponding to the group element $g\in\GL(V)$ such that $g\Ecal=\Ecal'$ is totally nonnegative, $g_{ii}=1$ and $v(\det g)=0$.
\end{itemize}
Then,
\[
f_{\Ecal}^{-1}(\Acal\cap\Acal')=\{x\in \Abb^{d-1}\colon -v(g_{i,i+1})\leq x_i-x_{i+1}\leq v(g_{i+1,i})\text{ for } 1\leq i<d\}.
\]
\end{cor}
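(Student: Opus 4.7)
The plan is to show that this corollary follows by a direct combination of Proposition \ref{prop:intersection} with Proposition \ref{prop:positiveintersection}: the first gives a description of $f_{\Ecal}^{-1}(\Acal\cap\Acal')$ by $d(d-1)$ inequalities, and the second shows that the total nonnegativity hypothesis lets us discard all but the $2(d-1)$ inequalities involving adjacent indices.

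First, I would identify the permutation $\overline{\sigma}$ appearing in Proposition \ref{prop:intersection}. Because $g_{ii}=1$ for all $i$, we have
\[
v(g_{11}g_{22}\cdots g_{dd})=v(1)=0=v(\det g).
\]
By Step 1 of \S \ref{ssec:interofapts}, the identity permutation achieves the required minimum, so we may take $\overline{\sigma}=\mathrm{id}$ in Proposition \ref{prop:intersection}.

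Next, I would substitute $\overline{\sigma}=\mathrm{id}$ and $g_{ii}=1$ (hence $v(g_{ii})=v(g_{jj})=0$) into the description from Proposition \ref{prop:intersection}. All of the terms $v(g_{ii}/g_{jj})$ disappear and $f_{\Ecal}^{-1}(\Acal\cap\Acal')$ becomes exactly
\[
\mathcal{I}_g=\{x\in\Abb^{d-1}\colon -v(g_{ij})\leq x_i-x_j\leq v(g_{ji})\text{ for } 1\leq i<j\leq d\},
\]
the set considered in Proposition \ref{prop:positiveintersection}. Since $g$ is totally nonnegative with unit diagonal and $|\det g|=e^{-v(\det g)}=1$, the hypotheses of Proposition \ref{prop:positiveintersection} are satisfied, so $\mathcal{I}_g=\mathcal{I}_g^+$. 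The right-hand side $\mathcal{I}_g^+$ is precisely the set in the conclusion of the corollary.

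There is essentially no obstacle here, since all the technical work was carried out in Proposition \ref{prop:positiveintersection}: the induction on $k$ there, combined with Lemma \ref{lem:wellbehaved} and Lemma \ref{lem:posvaluation}, is what turns the total nonnegativity of $g$ into the collapse of non-adjacent inequalities. The only small bookkeeping point in the present corollary is confirming that the hypotheses $g_{ii}=1$ and $v(\det g)=0$ make the identity a valid choice for $\overline{\sigma}$, so that the general formula of Proposition \ref{prop:intersection} reduces to the symmetric form $\mathcal{I}_g$ on which Proposition \ref{prop:positiveintersection} operates.
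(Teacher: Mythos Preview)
Your proposal is correct and follows exactly the approach the paper takes: the paper's own proof is the single sentence ``This is an immediate consequence of Proposition \ref{prop:intersection} and Proposition \ref{prop:positiveintersection},'' and you have simply unpacked that sentence by verifying that $\overline{\sigma}=\mathrm{id}$ works (since $v(\prod_i g_{ii})=0=v(\det g)$), so that Proposition \ref{prop:intersection} yields $\mathcal I_g$, and then Proposition \ref{prop:positiveintersection} collapses it to $\mathcal I_g^+$.
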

\begin{proof} This is an immediate consequence of Proposition \ref{prop:intersection} and Proposition \ref{prop:positiveintersection}.
\end{proof}

\begin{example} Note that Proposition \ref{prop:positiveintersection} fails if $M$ is not totally nonnegative. Consider the sequence $(e^{\lambda_n})\subset \Rbb$ and observe that this defines a non-zero element in $\Fbb$ with valuation
\[
v([e^{\lambda_n}])=-\lim_\omega \frac{1}{\lambda_n}\log e^{\lambda_n}=-1
\]
The matrix $M=
\begin{pmatrix}
1 & 1 & [e^{\lambda_n}]\\
0 & 1 & 1\\
0 & 0  & 1
\end{pmatrix}
$
is not totally nonnegative as it has a minor equal to $[1-e^{\lambda_n}]<0$.

Using the notations introduced in Proposition \ref{prop:positiveintersection}, the set $\mathcal I_M$ is defined by the inequalities
\[
x_1-x_2\geq 0, \ x_2-x_3\geq 0, \ x_1-x_3\geq -v([e^{\lambda_n}])=1
\]
and it is properly contained in $\mathcal I^+_M$, which is defined by the inequalities $x_1-x_2\geq 0$, and $x_2-x_3\geq 0$.
\end{example}

%%%%%%%%%%%%%%%%%%%%%%%%%%%%%%%%
\subsection{Positivity of configurations of flags in $\Fbb^d$}\label{ssec:positivitysections}
%%%%%%%%%%%%%%%%%%%%%%%%%%%%%%%%

In \S \ref{ssec:flagsendo}, we described how a sequence of tuple of flags in $\Rbb^d$ defines a tuple of flags in $\Fbb^d$. Fix an oriented triangulation $\mathscr T$ of the regular convex polygon with $t$ vertices $\mathscr P_t$. In particular, for any sequence of tuples of flags we obtain corresponding Fock-Goncharov parameters as described in \S \ref{ssec:positivity}.

\begin{definition}\label{def:ultrapositive} The ultralimit $(F_1,F_2,\dots, F_t)$ in $\Fbb^d$ of a sequence of $t$ real flags $(F_{1,n},F_{2,n},\dots, F_{t,n})$ is \emph{positive} if
\begin{enumerate}
	\item the tuple of flags $(F_1,F_2,\dots, F_t)$ has the maximum span property,
	\item the sequences of triple and double ratios $X_{a,b,c}(F_{i,n}, F_{j,n}, F_{k,n})$ and $Z_s(F_{i,n},F_{j,n},F_{k,n},F_{l,n})$ with respect to the oriented triangulation $\mathscr T$ are such that
	\begin{gather*}
	0<\ulim X_{a,b,c}(F_{i,n}, F_{j,n}, F_{k,n})<\infty \text{ and } 0<\ulim Z_s(F_{i,n}, F_{j,n}, F_{k,n},F_{l,n})<\infty
	\end{gather*}
	for all $(a,b,c)\in\Theta^\circ_d$ and $s=1,2,\dots, d-1$.
\end{enumerate}
\end{definition}

\begin{remark}\label{rmk:posultra} Observe that, in Definition \ref{def:ultrapositive}, the maximum span property for the tuple $(F_1,F_2,\dots, F_t)$ of flags in $\mathbb F^d$ is independent on the choice of oriented triangulation. The positivity property is also independent on the choice of triangulation $\mathscr T$ thanks to \cite[\S 10]{FG1}. In fact, given any other oriented triangulation $\mathscr T'$ of the regular convex polygon with $t$ vertices $\mathscr P_t$, the sequences of Fock-Goncharov coordinates for $\mathscr T'$ can be expressed as a ratio of subtraction-free polynomials of the Fock-Goncharov coordinates with respect to the triangulation $\mathscr T$. It follows that the positivity and finiteness of the ultralimit of the coordinates is preserved by a change of triangulation.
\end{remark}

The following example illustrates how the maximum span property in Definition \ref{def:ultrapositive} is not implied by the positivity of the ultralimits of the Fock-Goncharov coordinates.

\begin{example}\label{ex:notransv} Consider the sequence of four flags $(E_n,F_n,G_n,H_n)$ in $\Rbb^2$ such that
\begin{align*}
E_n^{(1)}=\mathrm{Span}\begin{pmatrix} 1\\ 0\end{pmatrix},&\ G_n^{(1)}=\mathrm{Span}\begin{pmatrix} e^{\lambda_n^2} \\ 1\end{pmatrix},\\
F_n^{(1)}=\mathrm{Span}\begin{pmatrix} -1+e^{\lambda_n^2}\\ 1\end{pmatrix},&\ H_n^{(1)}=\mathrm{Span}\begin{pmatrix} 2+e^{\lambda_n^2} \\ 1\end{pmatrix},
\end{align*}
An easy computation shows that the sequence of double ratios of these four lines is constant equal to two. In particular, it is positive. However,
\[
\ulim E_n^{(1)}=\ulim F_n^{(1)}=\ulim G_n^{(1)}=\ulim H_n^{(1)}=\mathrm{Span}\begin{pmatrix} 1\\ 0\end{pmatrix}.
\]
\end{example} 

The following lemma gives a sufficient criterion for positivity of the ultralimit of a sequence of positive tuples of flags.

\begin{lem} Consider the ultralimit $(F_1,F_2,\dots, F_t)$ of a sequence of tuples of flags $(F_{1,n},F_{2,n},\dots, F_{t,n})$. Assume that there exists $1\leq i,j,k\leq t$ such that for all $a,b=0,1,\dots, d$ and $c=0,1$ we have
\[
\dim\left(F_i^{(a)}+F_j^{(b)}+F_k^{(c)}\right)=\min\{a+b+c,d\}.
\]
Assume that the Fock-Goncharov invariants of $(F_1,F_2,\dots, F_t)$ have finite positive ultralimits. Then, the ultralimit $(F_1,F_2,\dots, F_t)$ is positive.
\end{lem}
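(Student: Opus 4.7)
\textbf{The plan} is to upgrade the weak maximum span condition on $(F_i,F_j,F_k)$ to the full maximum span property for $(F_1,\dots,F_t)$, since part (2) of Definition \ref{def:ultrapositive} is given by hypothesis and Remark \ref{rmk:posultra} lets me work with any convenient oriented triangulation $\mathscr T$ of $\mathscr P_t$. My strategy is to choose a basis of $V=\Fbb^d$ in which the three flags $F_i$, $F_j$, $F_k$ take a standard form, to express every other flag in that basis by subtraction-free rational functions of the Fock-Goncharov coordinates, and then to apply Lemma \ref{lem:posvaluation} to deduce non-vanishing of the wedge products that detect maximum span.

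\textbf{Normalization and expressing flags.} For each $n$, positivity of the tuple $(F_{1,n},\dots,F_{t,n})$ gives maximum span for the triple $(F_{i,n},F_{j,n},F_{k,n})$, so I can use $\PGL(d,\Rbb)$ to place it in the standard form used in the proof of Proposition \ref{prop:shearing}:
\[
F_{i,n}^{(a)}=\mathrm{Span}(e_1,\dots,e_a),\quad F_{j,n}^{(b)}=\mathrm{Span}(e_d,\dots,e_{d-b+1}),\quad F_{k,n}^{(1)}=\mathrm{Span}(e_1+\dots+e_d).
\]
This normalization is unique up to a global scalar; by Lemmas \ref{lem:ulimss} and \ref{lem:ulimflag}, the weak maximum span hypothesis is exactly what is needed to guarantee that the ultralimits of the normalizing bases assemble into a basis of $V=\Fbb^d$ in which $(F_i,F_j,F_k)$ takes the same standard form. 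For any $l\neq i,j$, iterating the snake basis moves and the shearing construction (Propositions \ref{prop:snakemoves} and \ref{prop:shearing}) along $\mathscr T$ lets me express the projective coordinates of each $F_{l,n}^{(a)}$ in this basis as subtraction-free rational functions of the Fock-Goncharov coordinates of $(F_{1,n},\dots,F_{t,n})$. This subtraction-freeness is precisely the content of the Corollary at the end of \S \ref{ssec:positivity}, which ensures that every snake basis change and every shearing matrix appearing in the iteration is totally nonnegative.

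\textbf{Conclusion and main obstacle.} Because each Fock-Goncharov coordinate has a positive and finite ultralimit, Lemma \ref{lem:posvaluation} applied separately to numerators and denominators shows that every subtraction-free rational expression obtained above defines a non-zero element of $\Fbb_{>0}$; in particular each $F_l$ is an honest flag in $\Fbb^d$. The maximum span property for $(F_1,\dots,F_t)$ reduces to the non-vanishing in $\Fbb$ of wedge products $f_1^{(a_1)}\wedge\dots\wedge f_t^{(a_t)}$ with $\sum_\alpha a_\alpha = d$; in the normalized basis each such wedge product is a minor of a matrix whose entries are the subtraction-free expressions from Step 2, hence is itself a subtraction-free polynomial in ultralimits of Fock-Goncharov coordinates. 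A final application of Lemma \ref{lem:posvaluation} gives it finite valuation, which forces it to be non-zero. The main obstacle I foresee is the subtraction-freeness claim in the previous paragraph: it is immediate for flags incident in $\mathscr T$ to the triangle on vertices $i,j,k$, where the explicit formulas of Propositions \ref{prop:snakemoves} and \ref{prop:shearing} apply directly, but extending it uniformly to all remaining flags calls for an induction along $\mathscr T$ together with a careful verification that the iterated snake and shearing matrices preserve total nonnegativity.
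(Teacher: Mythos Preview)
There is a genuine gap in your conclusion step. You write that each wedge product $f_1^{(a_1)}\wedge\dots\wedge f_t^{(a_t)}$ ``is a minor of a matrix whose entries are the subtraction-free expressions from Step~2, hence is itself a subtraction-free polynomial''. That implication is false in general: a minor of a matrix with subtraction-free entries involves the determinant expansion $\sum_\sigma \mathrm{sgn}(\sigma)\prod m_{\sigma(i)i}$, which contains subtractions. Equivalently, the total nonnegativity you invoke from the Corollary at the end of \S\ref{ssec:positivity} only yields that such minors lie in $\Fbb_{\geq 0}$, not in $\Fbb_{>0}$, so Lemma~\ref{lem:posvaluation} does not apply to conclude non-vanishing. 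The statement you need---that (a fixed sign times) each relevant wedge product is a subtraction-free Laurent polynomial in the Fock--Goncharov coordinates---is in fact true, but it is essentially the content of Fock--Goncharov's positivity theorem and not a formal consequence of the snake/shearing machinery you cite. The ``main obstacle'' you flag (iterating total nonnegativity along $\mathscr T$) is a separate and lesser issue; the real missing ingredient is the strict positivity of the minors.

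The paper proceeds quite differently. Rather than working directly in $\Fbb^d$, it reduces to $d=3$ by passing to quotient flags: for $(a,b,c)\in\Theta_d^\circ$ one quotients by $E^{(a-1)}\oplus F^{(b-1)}\oplus G^{(c-1)}$ and observes that $X_{a,b,c}(E,F,G)=X_{1,1,1}(\overline E,\overline F,\overline G)$ in the resulting $3$-dimensional space (and similarly for double ratios). The $3$-dimensional case is then handled by citing \cite[Prop.~5.5]{Par4} as a black box. Varying $(a,b,c)$ and iterating along the triangulation upgrades the weak span hypothesis step by step to the full maximum span property. This route avoids entirely the question of subtraction-freeness of general wedge products.
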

\begin{proof} Without loss of generality, let us assume $(i,j,k)=(1,2,3)$. Let $\mathscr T$ be an ideal triangulation of $\mathscr P_t$ such that the vertices labeled by $1$,$2$,$3$ and $4$ form a quadrilateral with diagonal labeled by the vertices $(v_1,v_3)$. In dimension $d=3$, this lemma is a consequence of \cite[Prop. 5.5]{Par4}. For $d>3$ one uses the following standard observation. For any triple of flags $(E_n,F_n,G_n)$ with the maximum span property, and for any $(a,b,c)\in\Theta^\circ_d$, the quotient flags $(\overline{E}_n,\overline{F}_n,\overline{G}_n)$ in $\Rbb^d/(E_n^{(a-1)}\oplus F_n^{(b-1)}\oplus G_n^{(c-1)})\cong \Rbb^3$ has the maximum span property. Moreover, it is easy to check that
\[
X_{a,b,c}(E_n,F_n,G_n)=X_{1,1,1}(\overline{E}_n,\overline{F}_n,\overline{G}_n).
\] 
Therefore, choose an index $(a,b,c)\in\Theta^\circ_d$ with $c=1$. Then, for every $n$, the quotient flag $\overline{F}_{3,n}$ is simply the flag with line $F_{3,n}^{(1)}$ and plane $F_{3,n}^{(2)}$. Letting $a$ and $b$ vary and applying \cite[Prop. 5.5]{Par4}, we have that $F_{3}^{(2)}$ is such that
\begin{equation}\label{eqn:lemgener}
\dim\left(F_1^{(a)}+F_2^{(b)}+F_3^{(2)}\right)=\min\{a+b+2,d\}
\end{equation}
Iterating this argument as we let $c$ vary between $2$ and $d-2$, we have that the limiting triple $(F_1,F_2,F_3)$ satisfies the maximum span property. 

A similar argument can be used to prove the maximum span property for quadruples of flags. In fact, whenever we have a sequence of maximum span quadruple of flags $(E_n,F_n,G_n,H_n)$, the quotient of $\Rbb^d$ by the subspace $E_n^{(i-1)}\oplus G_n^{(d-i-2)}$ is three-dimensional and defines a sequence of quadruples of flags $(\overline{E}_n,\overline{F}_n,\overline{G}_n,\overline{H}_n)$ such that 
\begin{gather*}
Z_i(E_n,F_n,G_n,H_n)=Z_1(\overline{E}_n,\overline{F}_n,\overline{G}_n, \overline{H}_n),\\
Z_{i+1}(E_n,F_n,G_n,H_n)=Z_2(\overline{E}_n,\overline{F}_n,\overline{G}_n, \overline{H}_n).
\end{gather*}
Therefore, consider the sequence of positive quadruples $(F_{1,n},F_{2,n}, F_{3,n}, F_{4,n})$. Applying \cite[Prop 5.5]{Par4} to the quadruples $(\overline{F}_{1,n},\overline{F}_{2,n},\overline{F}_{3,n},\overline{F}_{4,n})$ as we let $i$ vary between 1 and $d-2$, we obtain that 
\[
\dim\left(F_1^{(a)}+F_3^{(b)}+F_4^{(1)}\right)=\min\{a+b+1,d\}.
\] 
In summary, we showed that if the flags $(F_1,F_2,F_3)$ in $\Fbb^d$ satisfy Equation \ref{eqn:lemgener} and we have positivity of the ultralimits of the Fock-Gonchaorv coordinates of the quadruple $(F_1,F_2,F_3,F_4)$, then $(F_1,F_2,F_3)$ satisfies the maximum span property and $(F_1,F_3,F_4)$ satisfies Equation \ref{eqn:lemgener}. This finishes the proof as we can now iterate this procedure.
\end{proof}

Finally, recall that given a triple of flags $(E,F,G)$ in $\Rbb^d$, a snake $\sigma$ in $\Theta^\perp_d$ defines a projective basis for the space $(\Rbb^d)^*$. The following lemma states that snake bases are well behaved when we consider positive ultralimits of sequences of positive triples of flags.

\begin{lem}\label{lem:assnakes} Let $(E_n,F_n,G_n)$ be a sequence of positive triples of flags whose ultralimit $(E,F,G)$ is positive. Let $(u_{i,n})$ be the corresponding sequence of snake bases for the snake $\sigma$. Up to rescaling, assume that the sequence $(u_{1,n})$ of non-zero vectors in $(E_n^{(d-1)})^\perp$ is such that
\[
\ulim u_{1,n}=u_1\in\Fbb^d-\{0\}.
\]
Then, the ultralimit of $u_{i,n}$ is a non-zero vector in $\Fbb^d$ for every $i=1,2,\dots, d$.
\end{lem}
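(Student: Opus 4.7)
My plan is to prove, by induction on $k$, that $\ulim u_{k,n} = \widetilde{u}_k$, where $(\widetilde{u}_k)_{k=1}^d$ is the snake basis of the triple $(E,F,G)$ constructed \emph{in the $\Fbb$-vector space} via the recursion of Equation~\ref{eqn:snakebasis}, normalized by $\widetilde{u}_1 := u_1 = \ulim u_{1,n}$. The snake line
\[
\widetilde{L}_k := (E^{(\alpha_k)} + F^{(\beta_k)} + G^{(\gamma_k)})^\perp
\]
is one-dimensional over $\Fbb$ because $(E,F,G)$ has the maximum span property, which is part of the definition of a positive ultralimit (Definition~\ref{def:ultrapositive}). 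A short induction in $\Fbb^d$ using Lemma~\ref{lem:snakebase} shows that each $\widetilde{u}_k$ is a non-zero vector in $\widetilde{L}_k$: if $\widetilde{u}_k \neq 0$, then the decomposition $\widetilde{u}_k + \widetilde{u}_{k+1}' + \widetilde{u}_{k+1}'' = 0$ inside the 2-dimensional $\Fbb$-subspace $\widetilde{M}_k := (E^{(\alpha_k-1)} + F^{(\beta_k)} + G^{(\gamma_k)})^\perp$ forces both $\widetilde{u}_{k+1}'$ and $\widetilde{u}_{k+1}''$ to be non-zero, since otherwise two of the three pairwise distinct lines $\widetilde{L}_k$, $\widetilde{L}_{k+1}'$, $\widetilde{L}_{k+1}''$ would coincide. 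Hence establishing $\ulim u_{k,n} = \widetilde{u}_k$ will prove the lemma.

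The base case $k=1$ is the normalization hypothesis. For the inductive step, suppose $\ulim u_{k,n} = \widetilde{u}_k$. Both the real auxiliary vectors $u_{k+1,n}', u_{k+1,n}''$ of Lemma~\ref{lem:snakebase} and their $\Fbb$-analogues $\widetilde{u}_{k+1}', \widetilde{u}_{k+1}''$ are uniquely determined by the single linear equation
\[
v_k + v_{k+1}' + v_{k+1}'' = 0
\]
inside the 2-dimensional subspace $M_{k,n}$ (resp.\ $\widetilde{M}_k$), subject to the containment conditions $v_{k+1}' \in L_{k+1}'$ and $v_{k+1}'' \in L_{k+1}''$. Using Lemma~\ref{lem:ulimss} I select bases $(w_{1,n}, w_{2,n})$ of $M_{k,n}$---for instance, by orthonormal completion---whose ultralimits $(w_1, w_2)$ form a basis of $\widetilde{M}_k$. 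Expressing each of the vectors $u_{k,n}, u_{k+1,n}', u_{k+1,n}''$ in terms of $(w_{1,n}, w_{2,n})$, the construction reduces to solving a $2\times 2$ linear system with real coefficient sequences.

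The crucial step is verifying that the ultralimit of this coefficient matrix is invertible over $\Fbb$, so that Cramer's rule commutes with the ultralimit operation. This non-degeneracy is precisely the statement that the three ultralimit lines $\widetilde{L}_k$, $\widetilde{L}_{k+1}'$, $\widetilde{L}_{k+1}''$ are pairwise distinct inside $\widetilde{M}_k$, a direct consequence of the maximum span property of $(E,F,G)$. Cramer's rule then yields $\ulim u_{k+1,n}' = \widetilde{u}_{k+1}'$ and $\ulim u_{k+1,n}'' = \widetilde{u}_{k+1}''$, and applying the sign convention of Equation~\ref{eqn:snakebasis} gives $\ulim u_{k+1,n} = \widetilde{u}_{k+1}$, closing the induction. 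The main technical obstacle is this non-degeneracy check in the ultralimit, which ultimately reduces to the maximum span of the ultralimit triple---the key algebraic content of the positivity hypothesis on $(E,F,G)$.
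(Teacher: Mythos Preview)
Your proof is correct and follows essentially the same inductive strategy as the paper: both arguments pass through the recursive relation $u_{k,n}+u_{k+1,n}'+u_{k+1,n}''=0$ and use the maximum span property of the ultralimit triple $(E,F,G)$ to rule out any degeneration at each step. The paper phrases the non-degeneracy check as a rescaling argument (choosing $a_n',a_n''$ so that $a_n'u_{i,n}'$ and $a_n''u_{i,n}''$ have non-zero ultralimits and then showing $a',a''\in\Fbb-\{0\}$ by contradiction), whereas you work in explicit coordinates on the $2$-dimensional subspace $M_{k,n}$ and invoke Cramer's rule; these are two packagings of the same linear-algebra fact that the three ultralimit lines $\widetilde L_k,\widetilde L_{k+1}',\widetilde L_{k+1}''$ are pairwise distinct. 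Your version has the mild bonus of identifying the ultralimit basis with the snake basis built directly over $\Fbb$, which the paper leaves implicit. One small point you glide over is that the ultralimit of $M_{k,n}$ really is $\widetilde M_k$; this is not literally the statement of Lemma~\ref{lem:ulimss}, but it follows readily from that lemma together with the maximum span property (any two of the three limit lines span $\widetilde M_k$), and the paper's proof takes the analogous step equally lightly.
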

\begin{proof} Fix a snake $\sigma$ and denote by $(L_{1,n}, L_{2,n},\dots, L_{d,n})$ the sequence of line decompositions it defines via the triples of flags $(E_n,F_n,G_n)$ as described in \S \ref{ssec:snakes}. The result follows from the normalization for the vectors $u_{i,n}$ given in Lemma \ref{lem:snakebase} and by the maximum span property for the triple of flags $(E,F,G)$ in $\Fbb^d$. In fact, by construction of the snake basis, given $u_{i-1,n}\in L_{i-1,n}$, the vector $u_{i,n}\in L_{i,n}$ is defined recursively, up to a sign, as one of the two vectors $u'_{i,n}\in L'_i$ and $u''_{i,n}\in L''_i$ satisfying the equality $u_{i-1,n}+u'_{i,n}+u''_{i,n}=0$. On the other hand, the maximum span property implies that there exist sequences of non-zero real numbers $a_n'$ and $a_n''$ such that the ultralimits $\ulim a_n'u_{i,n}'$ and $\ulim a''_nu''_{i,n}$ are non-zero vectors $v_i'$ and $v_i''$ in $(\Fbb^d)^*$. If $a'=(a_n')$ and $a''=(a_n'')$ are elements in $\Fbb-\{0\}$, it follows that $\ulim u_{i,n}'=v_i'/a'$ and $\ulim u_{i,n}''=v_i''/a''$ are non-zero vectors in $(\Fbb^d)^*$. Therefore, we want to show that this has to be the case. This follows by writing
\[
u_{i-1,n}+\frac{1}{a'_n}(a'_nu'_{i,n})+\frac{1}{a''_n}(a''_nu''_{i,n})=0
\]
and observing that if $\lim_\omega |a'_n|^{1/\lambda_n}=0$ or $+\infty$, then $u_{i-1}\in\mathrm{Span}(v_i'')$ or $v_i'\in\mathrm{Span}(v_i'')$, respectively. In either case, this contradicts the maximum span property of the triple of flags $(E,F,G)$.
\end{proof}

%%%%%%%%%%%%%%%%%%%%%%%%%%%%%
\section{Positive intersections of flag apartments}\label{sec:main}
%%%%%%%%%%%%%%%%%%%%%%%%%%%%%

In this section we collect the proofs of our main results: Theorems \ref{intro:triple}, \ref{intro:quadruple} and \ref{intro:mono} from the introduction. Our main tool is Proposition \ref{prop:positiveintersection}, which we use to describe the geometry of a preferred collection of apartments in the $\Rbb$-Euclidean building $\Bcal_d$.

%%%%%%%%%%%%%
\subsection{Monotonicity for positive configurations of flags}\label{se:Kmono}
%%%%%%%%%%%%%

Consider a sequence of positive tuples of flags $(F_{1,n},F_{2,n},\dots, F_{t,n})$ in $\Rbb^d$. If the ultralimit $(F_1,F_2,\dots, F_t)$ is positive, any pair of flags $(F_i,F_j)$ defines a line decomposition of the $d$-dimensional vector space $V=(\Fbb^d)^*$. It follows from \S \ref{sssec:apts} that such a line decomposition determines an apartment $\Acal_{ij}$ in the $\Rbb$-Euclidean building $\Bcal_d$. Recall from the introduction that given three apartments $\Acal_{i_1j_1}$, $\Acal_{i_2j_2}$ and $\Acal_{i_3j_3}$ we say that $\Acal_{i_2j_2}$ \emph{combinatorially separates} $\Acal_{i_1j_1}$ and $\Acal_{i_3j_3}$ if, up to a cyclic permutation of the indices of the tuple of flags $(F_1,F_2,\dots, F_t)$, we have
\[
1\leq i_1\leq i_2\leq i_3<j_3\leq j_2\leq j_1\leq t.
\]

\begin{thm}[Theorem \ref{intro:mono}]\label{thm:monotone} Consider a sequence $(F_{1,n},F_{2,n},\dots, F_{t,n})$ of $t$ positive flags in $\Rbb^d$ with positive ultralimit $(F_1,F_2,\dots, F_t)$. Consider apartments $\Acal_1$, $\Acal_2$ and $\Acal_3$ defined via the line decompositions associated to pairs of flags $(F_{i_1},F_{j_1})$, $(F_{i_2},F_{j_2})$, and $(F_{i_3},F_{j_3})$, respectively. If the apartment $\Acal_2$ combinatorially separates $\Acal_1$ and $\Acal_3$, then
\[
\Acal_1\cap\Acal_3=\Acal_1\cap\Acal_2\cap\Acal_3.
\]
\end{thm}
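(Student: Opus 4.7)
The inclusion $\Acal_1\cap\Acal_2\cap\Acal_3\subseteq\Acal_1\cap\Acal_3$ is trivial, so the real content is the reverse inclusion $\Acal_1\cap\Acal_3\subseteq\Acal_2$. My plan is to induct on the ``combinatorial distance'' of the separation, defined as the number of strict inequalities among $i_1\leq i_2\leq i_3<j_3\leq j_2\leq j_1$. When this number is zero, two of the three apartments coincide and there is nothing to prove. Writing $\Acal_1\cap\Acal_3$ as a chain of pairwise intersections through intermediate apartments obtained by moving one endpoint at a time, it suffices to handle the \emph{base case} in which $\Acal_2$ differs from $\Acal_1$ by replacing a single flag by an adjacent one, and similarly $\Acal_3$ relative to $\Acal_2$.

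In this base case all relevant data fit inside a positive subtuple with at most five flags. The strategy is to apply Theorem \ref{intro:triple} to the positive triples $(F_{i_1},F_{i_2},F_{j_1})$ and $(F_{i_1},F_{i_3},F_{j_1})$ to obtain two parametrizations of $\Acal_1$ together with closed cones in $\Abb^{d-1}$ describing $\Acal_1\cap\Acal_2$ and $\Acal_1\cap\Acal_3$ respectively. After reconciling these two parametrizations via the affine Weyl group element provided by Theorem \ref{intro:quadruple}, the claim reduces to an explicit inclusion of one cone into the other. Concretely, the snake-basis change-of-basis matrix for the triple $(F_{i_1},F_{i_3},F_{j_1})$ factors as a product of the corresponding matrices for $(F_{i_1},F_{i_2},F_{j_1})$ and $(F_{i_2},F_{i_3},F_{j_1})$, and each factor is upper triangular and totally nonnegative by the corollary at the end of Section \ref{sec:flagsPos}. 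Corollary \ref{cor:uppertriangular} then represents each $\Acal_1\cap\Acal_j$ as the solution set of only $2(d-1)$ inequalities, and the multiplicativity of the factorization together with Lemma \ref{lem:wellbehaved}, used just as in the proof of Proposition \ref{prop:positiveintersection}, forces the inequalities defining $\Acal_1\cap\Acal_3$ to imply those defining $\Acal_1\cap\Acal_2$.

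The main obstacle I anticipate is bookkeeping: tracking the several snake bases attached to different choices of intermediate flag, reconciling the various parametrizations of $\Acal_1$ produced by Theorem \ref{intro:triple} applied to different triples, and controlling the affine Weyl group adjustments from Theorem \ref{intro:quadruple} without introducing sign or permutation errors. Geometrically, however, the mechanism is transparent: each additional intermediate flag contributes an extra nonnegative valuation to the constraints cutting out the cone, so a ``more separating'' apartment yields tighter inequalities, hence a smaller cone inside $\Acal_1$.
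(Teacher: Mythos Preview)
Your instinct is right: the heart of the argument is that the transition matrix from a basis of $\Acal_1$ to a basis of $\Acal_3$ factors through a basis of $\Acal_2$ as a product of totally nonnegative (triangular) matrices, and this forces the valuation inequalities cutting out $\Acal_1\cap\Acal_3$ to be at least as restrictive as those cutting out $\Acal_1\cap\Acal_2$. This is exactly the mechanism the paper uses. But two steps in your execution do not work as written.

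First, Theorem~\ref{intro:quadruple} does not reconcile the two parametrizations you need. That theorem compares the bottom-snake markings of $\Acal_{EG}$ coming from triples $(E,F,G)$ and $(E,H,G)$ for a \emph{positive} quadruple $(E,F,G,H)$, which requires $F$ and $H$ to lie on opposite sides of the diagonal $(E,G)$. In your base case the two middle flags $F_{i_2}$ and $F_{i_3}$ lie on the \emph{same} side of $(F_{i_1},F_{j_1})$, so the quadruple $(F_{i_1},F_{i_2},F_{j_1},F_{i_3})$ is not positive and Theorem~\ref{intro:quadruple} does not apply. Second, the factorization you write down is not correct: the bottom-to-top snake matrix for $(F_{i_1},F_{i_3},F_{j_1})$ sends a basis of $\Acal_{i_1 j_1}$ to a basis of $\Acal_{i_1 i_3}$, while the matrix for $(F_{i_2},F_{i_3},F_{j_1})$ has source $\Acal_{i_2 j_1}$; these do not compose.

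The paper sidesteps both issues by never invoking Theorems~\ref{intro:triple} or~\ref{intro:quadruple}. It chooses bases $\Ecal_1,\Ecal_2,\Ecal_3$ for the three apartments so that the transition matrices $A\colon\Ecal_1\to\Ecal_2$, $B\colon\Ecal_2\to\Ecal_3$, $C=AB$ are all totally nonnegative, and then proves a small valuation lemma: if $A$ is upper triangular, $B$ is triangular, and $v(\det C)=\min_\sigma v(\prod_k c_{\sigma(k)k})$, then $v(c_{i,i+1}/c_{i+1,i+1})\le v(a_{i,i+1}/a_{i+1,i+1})$. This is precisely the valuation inequality you anticipate from Lemma~\ref{lem:wellbehaved}, packaged once and for all. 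With this in hand, the paper treats two overlapping cases directly (all three apartments share one endpoint; the middle apartment shares one endpoint with each neighbor) rather than inducting on adjacency, and then reduces the general statement to these via three intermediate apartments $\Acal_{i_1 j_3}$, $\Acal_{i_1 j_2}$, $\Acal_{i_2 j_3}$. Your chain-induction would also work once the base case is repaired, but the paper's reduction is shorter.
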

%\begin{figure}
%\includegraphics[scale=.5]{Monotone.pdf}
%\put (-485, 175){\makebox[0.7\textwidth][r]{\tiny$E_1$}}
%\put (-400, 175){\makebox[0.7\textwidth][r]{\tiny$E_2=E_3$}}
%\put (-340, -6){\makebox[0.7\textwidth][r]{\tiny$F_3$}}
%\put (-400, -6){\makebox[0.7\textwidth][r]{\tiny$F_1=F_2$}}
%\put (-310, 113){\makebox[0.7\textwidth][r]{\tiny$\Acal_{E_1F_1}\cap\Acal_{E_3F_3}$}}
%\caption{Monotonicity for $d=2$ and $E_2=E_3$ and $F_2=F_1$}
%\label{fig:monotonicity}
%\end{figure}
\begin{proof} If $\Acal_1\cap\Acal_3=\emptyset$, the result is trivial. Therefore, we assume that the intersection $\Acal_1\cap\Acal_3$ is non-empty. Moreover, it suffices to show $\Acal_1\cap\Acal_3\subseteq \Acal_1\cap\Acal_2$. In fact, it then follows that $\Acal_1\cap\Acal_3\subseteq \Acal_1\cap\Acal_2\cap \Acal_3$ and the reverse inclusion is obvious. We subdivide the proof into three cases. The first two cases will need the following technical lemma.

\begin{lem}\label{lem:monolem} Let $A=(a_{ij})$, $B=(b_{ij})$ be totally nonnegative matrices in $\GL(d,\Fbb)$ such that $A=(a_{ij})$ is upper triangular and $B$ is a triangular matrix (upper or lower). Denote by $C=(c_{ij})$ the product $AB$ and assume that
\begin{equation}\label{eq:vdet}
v(\det C)=\min_{\sigma \in \mathfrak S_d}v(c_{\sigma(1)1}\dots c_{\sigma(d)d}).
\end{equation}
Then, $v(\det C)=v(c_{11}\dots c_{dd})$ and
\begin{equation}\label{eqn:monot}
v\Big(\frac{c_{i,i+1}}{c_{i+1,i+1}}\Big)\leq v\Big(\frac{a_{i,i+1}}{a_{i+1,i+1}}\Big).
\end{equation}
\end{lem}
\begin{proof} As $A$ and $B$ are totally nonnegative, we have the following implications 
\begin{align*}
c_{i,i+1}=a_{i,i+1}b_{i+1,i+1}+\sum_{j\neq i}a_{ij}b_{j,i+1} &\Longrightarrow v(c_{i,i+1})\leq v(a_{i,i+1}b_{i+1,i+1}),\\
c_{ii}=a_{ii}b_{ii}+\sum_{j\neq i}a_{ij}b_{ji}&\Longrightarrow v(c_{ii})\leq v(a_{ii}b_{ii}).
\end{align*}
Moreover, as $A$ and $B$ are triangular matrices in $\GL(d,\Fbb)$, we have that the valuations $v(a_{ii})$ and $v(b_{ii})$ are finite. Let us prove the equality $v(c_{ii})=v(a_{ii}b_{ii})$ for all $i$. We have \begin{align*}
v(a_{11}\dots a_{dd}b_{11}\dots b_{dd})&=v(\det C)\\
&\leq v(c_{11}\dots c_{dd})\\
&\leq v(a_{11}b_{11}\dots a_{dd}b_{dd})
\end{align*}
where the first inequality follows from Equation \ref{eq:vdet}. Therefore, $v(\det C)=v(c_{11}\dots c_{dd})$ and 
\begin{align*}
v(c_{i,i+1})-v(c_{i+1,i+1})&\leq v(a_{i,i+1})+v(b_{i+1,i+1}) - v(a_{i+1,i+1})-v(b_{i+1,i+1})\\
&=v(a_{i,i+1})-v(a_{i+1,i+1})
\end{align*}
which is equivalent to Equation \ref{eqn:monot}.
\end{proof}

It follows from Proposition \ref{prop:ulimend} and Lemma \ref{lem:assnakes} that there exist bases $\Ecal_i$, $i=1,2,3$ of $V$ such that 
\begin{itemize}
	\item[-] the apartment $\Acal_i$ is the image of $\Abb^{d-1}$ via the standard marking of the basis $\Ecal_i$,
	\item[-] for $i<j$, if $g_{ij}\in\GL(V)$ is the element such that $g_{ij}\Ecal_i=\Ecal_j$, then the corresponding matrix in the basis $\Ecal_i$ is totally nonnegative. 
\end{itemize}
Let $A$, $B$ and $C$ be the totally nonnegative matrices corresponding to the elements $g_{12}$, $g_{23}$ and $g_{13}$, respectively. Observe that $C=(ABA^{-1})A=AB$. Moreover, as $\Acal_1\cap\Acal_3\neq \emptyset$, Step 1 in \S \ref{ssec:interofapts} implies that the determinant of $C$ satisfies Equation \ref{eq:vdet}.

\textbf{Case 1}. Assume $F_{i_1}=F_{i_2}=F_{i_3}$. It follows that $A$ and $B$ are upper triangular as they need to preserve the flag $F_{i_k}$. In particular, the determinant of $C$ is $\prod_i c_{ii}=\prod_i a_{ii}b_{ii}$. By Corollary \ref{cor:uppertriangular} we know that 
\begin{gather*}
\Acal_1\cap\Acal_2= f_{\Ecal_1}\left(\left\{x\in\Abb^{d-1}\colon x_{i}-x_{i+1}\geq - v\left(\frac{a_{i,i+1}}{a_{i+1,i+1}}\right)\right\}\right)\\
\Acal_1\cap\Acal_3= f_{\Ecal_1}\left(\left\{x\in\Abb^{d-1}\colon x_{i}-x_{i+1}\geq - v\left(\frac{c_{i,i+1}}{c_{i+1,i+1}}\right)\right\}\right).
\end{gather*}
Therefore, Lemma \ref{lem:monolem} implies that $\Acal_1\cap\Acal_3\subseteq \Acal_1\cap\Acal_2$.

\textbf{Case 2}. Assume $F_{i_1}=F_{i_2}$ and $F_{j_2}=F_{j_3}$. Therefore, $A$ is upper triangular and $B$ is lower triangular. In particular,
\[
\Acal_1\cap\Acal_2= f_{\Ecal_1}\left(\left\{x\in\Abb^{d-1}\colon x_{i}-x_{i+1}\geq - v\left(\frac{a_{i,i+1}}{a_{i+1,i+1}}\right)\right\}\right)
\]
On the other hand, by the second part of Lemma \ref{lem:monolem} and Proposition \ref{prop:positiveintersection} we have
\[
\Acal_1\cap\Acal_3= f_{\Ecal_1}\left(\left\{x\in\Abb^{d-1}\colon -v\left(\frac{c_{i,i+1}}{c_{i+1,i+1}}\right)\leq x_{i}-x_{i+1}\leq v\left(\frac{c_{i+1,i}}{c_{ii}}\right)\right\}\right).
\]
Once again, Lemma \ref{lem:monolem} implies that $\Acal_1\cap\Acal_3\subseteq \Acal_1\cap\Acal_2$.

\textbf{General Case.} Recall that we denote by $\Acal_{ij}$ the apartment defined by the flags $F_i$ and $F_j$ so that $\Acal_k=\Acal_{i_kj_k}$. Using Case 2, we have
\[
\Acal_{1}\cap\Acal_{3}= \Acal_{1}\cap\Acal_{i_1,j_3}\cap \Acal_{3}.
\]
On the other hand, the previous cases imply the following inclusions
\begin{equation}\label{eq:mono}
\begin{gathered}
\Acal_{1}\cap\Acal_{i_1,j_3}\subseteq\Acal_{1}\cap\Acal_{i_1,j_2},\\
\Acal_{i_1,j_3}\cap\Acal_{3}\subseteq\Acal_{i_1,j_3}\cap\Acal_{i_2,j_3},\\
\Acal_{i_1,j_2}\cap \Acal_{i_2,j_3}\subseteq \Acal_{i_1,j_2}\cap\Acal_2.
\end{gathered}
\end{equation}
Therefore, it follows that
\begin{align*}
\Acal_1\cap\Acal_3&=\Acal_{1}\cap\Acal_{i_1,j_3}\cap \Acal_{3}\\
&\subseteq \Acal_1\cap \Acal_{i_1,j_2}\cap \Acal_{i_2,j_3}\\
&\subseteq \Acal_1\cap\Acal_2
\end{align*}
where the first inclusion follows from the first two lines in Equation \ref{eq:mono} and the second inclusion follows from the last line in Equation \ref{eq:mono}.\end{proof}

%%%%%%%%%%%%%%%%%%%%%%%%%%%%%
\subsection{Ultralimits of positive triples and intersection of apartments}\label{sec:snakesapart}
%%%%%%%%%%%%%%%%%%%%%%%%%%%%%

For the rest of this section, fix a sequence of positive triples of flags $(E_n,F_n,G_n)$ in $\Rbb^d$ with positive ultralimit the triple $(E,F,G)$ of flags in $\Fbb^d$. We ease notation by setting   
\[
X_{a,b,c}:=\ulim X_{a,b,c}(E_n,F_n,G_n),
\]
which, by hypothesis, is positive in the field $\Fbb$.

As the triple $(E,F,G)$ has the maximum span property, the choice of a snake $\sigma$ in $\Theta_d^\perp$ determines a line decomposition $\mathcal L_\sigma$ of $V=(\Fbb^d)^*$ and a corresponding apartment $\mathcal A_\sigma$ in the $\Rbb$-Euclidean building $\Bcal_d$.

\begin{lem}\label{lem:snakeapt} Let $\Acal_\sigma$ and $\Acal_{\sigma'}$ be apartments associated to snakes $\sigma$ and $\sigma'$. Then, $\Acal_\sigma\cap \Acal_{\sigma'}\neq\emptyset$.
\end{lem}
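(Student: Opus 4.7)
I would prove the lemma by constructing an explicit ultrametric norm $\eta$ on $V = (\Fbb^d)^*$ adapted to both snake bases $\Ecal_\sigma = (u_j)$ and $\Ecal_{\sigma'} = (u'_k)$; by Theorem \ref{thm:normmodel}, this produces a point $[\eta] \in \Acal_\sigma \cap \Acal_{\sigma'}$. Geometrically, the construction corresponds to producing a common Weyl sector shared by every snake-apartment, reflecting the fact that the line decomposition associated to any snake refines the common flag $\bigl((E^{(d-k)})^\perp\bigr)_{k=0,\dots,d}$ of $V$.

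By Proposition \ref{prop:snakemoves} together with the corollary at the end of \S \ref{ssec:positivity}, the basis change $u'_k = \sum_{j \leq k} g_{jk} u_j$ is upper triangular and totally nonnegative, with diagonal entries $g_{kk}$ belonging to $\Fbb_{>0}$. The positivity of the ultralimit $(E,F,G)$ is essential here: it guarantees that each $g_{kk}$, being a product of triple ratios in $\Fbb_{>0}$, has finite valuation.

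I would then fix $N > 0$ larger than $\max_{j<k,\,g_{jk}\neq 0}(v(g_{jk})-v(g_{kk}))/(k-j)$ and take $\eta$ as the ultrametric norm adapted to $\Ecal_\sigma$ with $\eta(u_i) = e^{iN}$ (after renormalizing so the corresponding point lies in $\Abb^{d-1}$). The choice of $N$ ensures that the diagonal summand $g_{kk} u_k$ strictly $\eta$-dominates the off-diagonal ones in the expansion of $u'_k$, so $\eta(u'_k) = \absv{g_{kk}} e^{kN}$. For arbitrary scalars $y_k \in \Fbb$, let $K^*$ be an index achieving $\max_k \absv{y_k}\absv{g_{kk}} e^{kN}$. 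Expanding $\sum_k y_k u'_k$ in the $\Ecal_\sigma$-basis and applying the same strict-domination principle at two levels, first to single out the $u_{K^*}$-component among all components $u_j$, and second to single out the diagonal term $y_{K^*} g_{K^*K^*}$ within that component among all terms $y_k g_{K^*k}$ for $k \geq K^*$, one deduces $\eta(\sum_k y_k u'_k) = \absv{y_{K^*} g_{K^*K^*}} e^{K^* N} = \max_k \absv{y_k} \eta(u'_k)$, which shows that $\eta$ is adapted to $\Ecal_{\sigma'}$.

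The main obstacle is this two-level ultrametric strict-domination argument, which must hold uniformly over all scalars $y_k$. The maximality defining $K^*$ controls the ratios $\absv{y_{K^*}/y_k}$ relative to the entries of $g$, while the chosen $N$ controls the off-diagonal entries. Since $g$ has only finitely many entries of finite valuation, a suitable $N$ exists, so that both levels of domination hold simultaneously.
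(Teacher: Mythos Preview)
Your approach is correct and rests on the same geometric observation as the paper's proof: every snake line decomposition refines the common flag $\bigl((E^{(d-k)})^\perp\bigr)_{k}$. The paper simply records this fact and invokes \cite[Prop.~3.8]{Par1}, which guarantees that two apartments whose line decompositions are adapted to a common flag share at least a Weyl sector. You instead unpack that citation by exhibiting an explicit adapted norm deep in the Weyl chamber; this is more self-contained but otherwise the same idea.

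Two small remarks. First, your explicit threshold for $N$ has the sign reversed: to force $\absv{g_{kk}}e^{kN}>\absv{g_{jk}}e^{jN}$ for $j<k$ you need $N>\bigl(v(g_{kk})-v(g_{jk})\bigr)/(k-j)$, not the quantity you wrote. This is harmless, since you only need \emph{some} sufficiently large $N$ and there are finitely many constraints. Second, your two-level domination argument does go through even when several indices tie for $K^*$: the strict inequality $\absv{g_{kk}}e^{(k-K^*)N}>\absv{g_{K^*k}}$ coming from the choice of $N$, combined with the (possibly non-strict) maximality of $K^*$, still yields $\absv{y_{K^*}g_{K^*K^*}}>\absv{y_k g_{K^*k}}$ for every $k>K^*$ with $y_k\neq 0$, so the ultrametric equality $\bigl|\sum_{k\geq K^*}y_k g_{K^*k}\bigr|=\absv{y_{K^*}g_{K^*K^*}}$ holds as claimed.
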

\begin{proof} Any snake determines a line decomposition $(L_1,L_2,\dots,L_d)$ of $V$ such that for every $i$, 
\[
L_1\oplus L_2\oplus\dots\oplus L_i=(E^{(d-i)})^\perp.
\]
It then follows from \cite[Prop. 3.8]{Par1} that given any two snakes $\sigma$ and $\sigma'$, the corresponding apartment $\Acal_\sigma$ and $\Acal_{\sigma'}$ intersect in, at least, a Weyl sector.
\end{proof}

As a consequence of Lemma \ref{lem:assnakes}, every snake $\sigma$ determines a projective basis of $V=(\Fbb^{d})^*$. Remark \ref{rmk:actiononapts} implies that given bases $(e_1,e_2,\dots, e_d)$ and $(\lambda e_1,\lambda e_2,\dots, \lambda e_d)$ for some $\lambda\in\Fbb-\{0\}$, the corresponding standard markings are equal. Therefore, given a snake $\sigma$ there exists a unique associated marking $f_\sigma$ of $\mathcal A_\sigma$ obtained by taking the ultralimit of the sequences of snake bases of $\sigma$ with respect to the sequence of triples of flags $(E_n,F_n,G_n)$. We refer to this marking as the \emph{standard marking of $\sigma$}. Thanks to Propositions \ref{prop:snakemoves}, Proposition \ref{prop:ulimend} and Lemma \ref{lem:assnakes}, we have explicit expressions for the totally nonnegative matrices $M^{\sigma'}_\sigma=\ulim M^{\sigma'}_\sigma(E_n,F_n,G_n)\in\GL(d,\Fbb)$ sending the ultralimit of the sequence of $\sigma$-bases to the ultralimit of sequences of $\sigma'$-bases. We use this fact together with Proposition \ref{prop:positiveintersection} to explicitly describe the intersections of the apartments associated to snakes.

Recall from \S \ref{ssec:snakes}, that the bottom snake $\sigma^\bott$ is the snake associated to the line decomposition $(E^{(d-i)}\oplus G^{(i-1)})^\perp$. Concretely, Lemma \ref{lem:asdiamond} and Lemma \ref{lem:astail} below say that if the snake $\sigma'$ is obtained from the snake $\sigma$ by a diamond or a tail move, the intersection between the apartments $\Acal_{\sigma^\bott}\cap\Acal_{\sigma'}$ can be obtained from the intersection $\Acal_{\sigma^\bott}\cap\Acal_\sigma$ via a restriction to a half-apartment and by a translation.

\begin{lem}\label{lem:asdiamond} Let $\sigma$ and $\sigma'$ be snakes in $\Theta^\perp_d$ such that $\sigma'$ is obtained from $\sigma$ by a diamond move at $k+1$. Suppose the intersection of $\Acal_{\sigma^\bott}\cap\Acal_\sigma$ is the image via the standard marking $f_{\sigma^\bott}$ of the set
\[
\{x\in\Abb^{d-1}\colon x_i-x_{i+1}+\alpha_i\geq -\beta_i,\ i=1,\dots, d-1\}
\]
with $\alpha_i\in\Rbb$ and $\beta_i\in \Rbb\cup\{\infty\}$. 
Then, the intersection $\Acal_{\sigma^\bott}\cap\Acal_{\sigma'}$ is the image under $f_{\sigma^\bott}$ of the set 
\[
\{x\in\Abb^{d-1}\colon x_i-x_{i+1}+\alpha_i'\geq -\beta_i',\ i=1,\dots, d-1\},
\]
where
\begin{gather*}
\alpha_i'=
\begin{cases}
\alpha_i&\text{ for } i\neq k+1\\
\alpha_i-v(X_{a,b,c})&\text{ for } i=k+1
\end{cases}
,\qquad
\beta_i'=
\begin{cases}
\beta_i&\text{ for } i\neq k,k+1\\
\min\{0,\beta_i\}&\text{ for } i= k\\
\beta_{i}+v(X_{a,b,c})&\text{ for } i=k+1
\end{cases}
\end{gather*}
 and $X_{a,b,c}$ is the triple ratio naturally associated to $\sigma$ and $\sigma'$.
\end{lem}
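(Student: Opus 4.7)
The plan is to reduce Lemma \ref{lem:asdiamond} to a finite matrix computation using the explicit snake-move formulas of Proposition \ref{prop:snakemoves}, the total nonnegativity of the relevant snake basis changes, and Corollary \ref{cor:uppertriangular}. First I would set $A = (a_{ij})$ to be the ultralimit of the sequence of upper triangular totally nonnegative matrices $\mathcal M_{\sigma^\bott}^\sigma(E_n,F_n,G_n)$ expressing the $\sigma$-basis in the $\sigma^\bott$-basis; Proposition \ref{prop:ulimend} and Lemma \ref{lem:assnakes} guarantee that $A$ is a well-defined totally nonnegative upper triangular element of $\mathrm{GL}(d,\mathbb F)$. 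Since $A$ is upper triangular and invertible, the permutation $\overline{\sigma}$ produced by Step 1 of \S \ref{ssec:interofapts} must be the identity (any other permutation forces some below-diagonal factor to vanish), so Proposition \ref{prop:intersection} together with Corollary \ref{cor:uppertriangular} identifies $\mathcal A_{\sigma^\bott}\cap\mathcal A_\sigma$ with the $f_{\sigma^\bott}$-image of the half-apartment cut out by
\[
x_i - x_{i+1} + v(a_{ii}/a_{i+1,i+1}) \geq -v(a_{i,i+1}/a_{ii}),\qquad i=1,\dots,d-1,
\]
so that $\alpha_i = v(a_{ii}/a_{i+1,i+1})$ and $\beta_i = v(a_{i,i+1}/a_{ii})$.

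Next I would record the diamond-move matrix $D := \mathcal M_\sigma^{\sigma'}$ from Proposition \ref{prop:snakemoves}: it is upper triangular, its diagonal is $(1,\dots,1,X_{a,b,c},\dots,X_{a,b,c})$ with the scalars $X_{a,b,c}$ occupying positions $k+2,\dots,d$, and its only nonzero off-diagonal entry is a $1$ in position $(k,k+1)$. In particular $D$ is totally nonnegative, and so is the product $A' := AD$, which sends the $\sigma^\bott$-basis to the $\sigma'$-basis. A straightforward column-by-column calculation yields $a'_{ii}=a_{ii}$ for $i\le k+1$ and $a'_{ii}=X_{a,b,c}\, a_{ii}$ for $i\ge k+2$; and $a'_{i,i+1}=a_{i,i+1}$ for $i\le k-1$, $a'_{k,k+1}=a_{kk}+a_{k,k+1}$, and $a'_{i,i+1}=X_{a,b,c}\, a_{i,i+1}$ for $i\ge k+1$.

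Finally, applying Corollary \ref{cor:uppertriangular} to the totally nonnegative matrix $A'$ yields a description of $\mathcal A_{\sigma^\bott}\cap\mathcal A_{\sigma'}$ by adjacent inequalities with $\alpha'_i = v(a'_{ii}/a'_{i+1,i+1})$ and $\beta'_i = v(a'_{i,i+1}/a'_{ii})$. Substituting the entries of $A'$ computed above immediately gives all of the claimed formulas for $\alpha'_i$ and $\beta'_i$ except for $\beta'_k$. That last case is where total nonnegativity really matters: one has
\[
\beta'_k = v\bigl((a_{kk}+a_{k,k+1})/a_{kk}\bigr) = v\bigl(1 + a_{k,k+1}/a_{kk}\bigr),
\]
and since both $1$ and $a_{k,k+1}/a_{kk}$ lie in $\mathbb F_{\geq 0}$ (positivity of $A$ and $1$), Lemma \ref{lem:posvaluation} forces $\beta'_k = \min\{0,\, v(a_{k,k+1}/a_{kk})\} = \min\{0,\beta_k\}$, matching the lemma. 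The main obstacle is purely organizational, namely keeping straight which columns of $A$ contribute to each column of $A' = AD$; once the entries are lined up, the dropout to $\min\{0,\beta_k\}$ is precisely the payoff of working in the positive regime via Lemma \ref{lem:posvaluation}.
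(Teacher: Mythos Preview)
Your proposal is correct and follows essentially the same approach as the paper: both compute the entries of the change-of-basis matrix $M_{\sigma^\bott}^{\sigma'}$ from those of $M_{\sigma^\bott}^{\sigma}$ via the diamond-move formulas of Proposition \ref{prop:snakemoves}, apply Corollary \ref{cor:uppertriangular} to the resulting totally nonnegative upper triangular matrix, and handle the $i=k$ case using Lemma \ref{lem:posvaluation} on $1 + a_{k,k+1}/a_{kk}$. The only cosmetic difference is that you factor the computation as $A' = AD$ with $D$ the explicit diamond-move matrix, whereas the paper writes the entries $m'_{ij}$ directly.
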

\begin{proof} Consider the basis change matrices $M_{\sigma^\bott}^\sigma=(m_{ij})$ from $(u^\bott_i)$ to $(u_i^\sigma)$ and $M_{\sigma^\bott}^{\sigma'}=(m'_{ij})$ from $(u^\bott_i)$ to $(u_i^{\sigma'})$ where we assume $u^\bott_1=u_1^\sigma=u_1^{\sigma'}$. Observe that $M_{\sigma^\bott}^\sigma$ and $M_{\sigma^\bott}^{\sigma'}$ are totally nonnegative as they are products of totally nonnegative matrices and they are upper triangular. Moreover, by Proposition \ref{prop:snakemoves}, we know that 
\begin{gather}\label{eq:globalsnake1}
m'_{ii}=
\begin{cases}
m_{ii} &\text{ for }i\leq k+1\\
X_{a,b,c}m_{ii} &\text{ for }i>k+1
\end{cases}
,\qquad
m'_{i,i+1}=
\begin{cases}
m_{i,i+1}& \text{ for }i<k\\
m_{ii}+m_{i,i+1} & \text{ for }i=k\\
X_{a,b,c}m_{i,i+1} &\text{ for }i>k
\end{cases}
\end{gather}
%\begin{equation}\label{eq:globalsnake1}
%\begin{cases}
%m'_{ii}=m_{ii} &\text{ for }i\leq k+1\\
%m'_{ii}=X_{a,b,c}m_{ii} &\text{ for }i>k+1\\
%m'_{i,i+1}=m_{i,i+1}& \text{ for }i<k\\
%m'_{i,i+1}=m_{ii}+m_{i,i+1} & \text{ for }i=k\\
%m'_{i,i+1}=X_{a,b,c}m_{i,i+1} &\text{ for }i>k.
%\end{cases}
%\end{equation}
As the matrices $M_{\sigma^\bott}^{\sigma}$ and $M_{\sigma^\bott}^{\sigma'}$ are upper triangular, it follows from Proposition \ref{prop:intersection} and Corollary \ref{cor:uppertriangular} that $\Acal_{\sigma^\bott}\cap\Acal_\sigma$ and $\Acal_{\sigma^\bott}\cap\Acal_{\sigma'}$ are the images under the marking $f_{\sigma^\bott}$ of the sets
\begin{align*}
\Acal_{\sigma^\bott}\cap\Acal_\sigma\colon\left\{x\in \Abb^{d-1}\colon  x_i-x_{i+1}+v\left(\frac{m_{ii}}{m_{i+1,i+1}}\right)\geq -v\left(\frac{m_{i,i+1}}{m_{ii}}\right)\text{ for } 1\leq i<d\right\},\\
\Acal_{\sigma^\bott}\cap\Acal_{\sigma'}\colon \left\{x\in \Abb^{d-1}\colon  x_i-x_{i+1}+v\left(\frac{m'_{ii}}{m'_{i+1,i+1}}\right)\geq -v\left(\frac{m'_{i,i+1}}{m'_{ii}}\right)\text{ for } 1\leq i<d\right\}.
\end{align*}
Therefore, by Equation \ref{eq:globalsnake1}, the intersection $\Acal_{\sigma^\bott}\cap\Acal_{\sigma'}$ is the image under the marking $f_{\sigma^\bott}$ of the set of $x\in\Abb^{d-1}$ satisfying the following inequalities:
\[
\begin{cases}
x_i-x_{i+1}+v\left(\frac{m_{ii}}{m_{i+1,i+1}}\right)\geq -v\left(\frac{m_{i,i+1}}{m_{ii}}\right) & \text{ for } i< k\\ 
x_i-x_{i+1}+v\left(\frac{m_{ii}}{m_{i+1,i+1}}\right)\geq -v\left(1+\frac{m_{i,i+1}}{m_{ii}}\right)& \text{ for } i= k\\
x_{i}-x_{i+1}+v\left(\frac{m_{ii}}{m_{i+1,i+1}}\right)-v(X_{a,b,c})\geq -v\left(X_{a,b,c}\frac{m_{i,i+1}}{m_{ii}}\right)& \text{ for } i= k+1\\
x_i-x_{i+1}+v\left(\frac{X_{a,b,c}m_{ii}}{X_{a,b,c}m_{i+1,i+1}}\right)\geq -v\left(\frac{X_{a,b,c}m_{i,i+1}}{X_{a,b,c}m_{ii}}\right)& \text{ for } i>k+1\\
\end{cases}
\]
Observe that as $1+\frac{m_{k,k+1}}{m_{kk}}\in\Fbb_{\geq 0}$, its valuation is equal to $\min\left\{0,v\left(\frac{m_{k,k+1}}{m_{kk}}\right)\right\}$. The result follows by comparing the above inequalities to the inequalities defining $\Acal_{\sigma^\bott}\cap\Acal_{\sigma}$.
\end{proof}

An analogous argument as the one in the proof of Lemma \ref{lem:asdiamond} shows the following.

\begin{lem}[Asymptotic tail move]\label{lem:astail} Let $\sigma$ and $\sigma'$ be snakes in $\Theta^\perp_d$ such that $\sigma'$ is obtained from $\sigma$ by a tail move. Suppose the intersection of $\Acal_{\sigma^\bott}\cap\Acal_\sigma$ is the image under the marking $f_{\sigma^\bott}$ of the set
\[
\{x\in\Abb^{d-1}\colon x_i-x_{i+1}+\alpha_i\geq -\beta_i\text{ for }i=1,\dots,d-1\}
\]
with $\alpha_i\in\Rbb$, $\beta_i\in \Rbb\cup\{\infty\}$. Then, the intersection of $\Acal_{\sigma^\bott}\cap\Acal_{\sigma'}$ is the image under the marking $f_{\sigma^\bott}$ of the set defined by the inequalities
\[
\begin{cases}
x_i-x_{i+1}+\alpha_i\geq -\beta_i &\text{ for } i<d-1,\\
x_i-x_{i+1}+\alpha_i\geq -\min\{0,\beta_i\}&\text{ for } i=d-1.
\end{cases}
\]
\end{lem}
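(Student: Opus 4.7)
The plan is to mirror the proof of Lemma \ref{lem:asdiamond} but with the structural simplification that a tail move affects only the last basis vector, so no triple ratio rescaling appears. The strategy is to read off the change in the basis change matrices from Equation \ref{eqn:tail}, apply Corollary \ref{cor:uppertriangular} to both apartments, and then compare the resulting systems of inequalities entry by entry.

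First I would set up the upper triangular totally nonnegative matrices $M_{\sigma^\bott}^\sigma=(m_{ij})$ and $M_{\sigma^\bott}^{\sigma'}=(m'_{ij})$ obtained as ultralimits of the real snake basis change matrices, choosing normalizations so that $u_1^\bott=u_1^\sigma=u_1^{\sigma'}$. Since the tail move formula (Equation \ref{eqn:tail}) gives $u_i^{\sigma'}=u_i^\sigma$ for $i<d$ and $u_d^{\sigma'}=u_{d-1}^\sigma+u_d^\sigma$, expressing these in the $\sigma^\bott$-basis yields
\[
m'_{ij}=m_{ij}\text{ for }j<d,\qquad m'_{id}=m_{i,d-1}+m_{id}.
\]
Upper triangularity of $M^\sigma_{\sigma^\bott}$ forces $m_{d,d-1}=0$, so in particular
\[
m'_{ii}=m_{ii}\text{ for all }i,\quad m'_{i,i+1}=m_{i,i+1}\text{ for }i<d-1,\quad m'_{d-1,d}=m_{d-1,d-1}+m_{d-1,d}.
\]

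Next, I would apply Corollary \ref{cor:uppertriangular} to $M_{\sigma^\bott}^{\sigma'}$ to describe $\Acal_{\sigma^\bott}\cap\Acal_{\sigma'}$ as the image under $f_{\sigma^\bott}$ of the set
\[
\left\{x\in\Abb^{d-1}\colon x_i-x_{i+1}+v\left(\tfrac{m'_{ii}}{m'_{i+1,i+1}}\right)\geq -v\left(\tfrac{m'_{i,i+1}}{m'_{ii}}\right),\ 1\leq i<d\right\}.
\]
For $i<d-1$ every relevant entry coincides with the corresponding entry of $M_{\sigma^\bott}^\sigma$, so the inequality $x_i-x_{i+1}+\alpha_i\geq -\beta_i$ is unchanged. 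The only novelty is at $i=d-1$, where $v(m'_{d-1,d-1}/m'_{dd})=\alpha_{d-1}$ is again unchanged, while
\[
v\left(\tfrac{m'_{d-1,d}}{m'_{d-1,d-1}}\right)=v\left(1+\tfrac{m_{d-1,d}}{m_{d-1,d-1}}\right).
\]

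The core step is to evaluate this last valuation. Since $M_{\sigma^\bott}^\sigma$ is totally nonnegative and $m_{d-1,d-1}\neq 0$, the ratio $m_{d-1,d}/m_{d-1,d-1}$ lies in $\Fbb_{\geq 0}$; hence so does $1+m_{d-1,d}/m_{d-1,d-1}$, and Lemma \ref{lem:posvaluation} gives
\[
v\left(1+\tfrac{m_{d-1,d}}{m_{d-1,d-1}}\right)=\min\left\{0,\,v\left(\tfrac{m_{d-1,d}}{m_{d-1,d-1}}\right)\right\}=\min\{0,\beta_{d-1}\}.
\]
Substituting into the inequality at $i=d-1$ yields $x_{d-1}-x_d+\alpha_{d-1}\geq -\min\{0,\beta_{d-1}\}$, as claimed.

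I do not anticipate a genuine obstacle: the argument is essentially a specialization of Lemma \ref{lem:asdiamond} with $k+1=d$ and the triple ratio factor absent. The only care needed is in the bookkeeping at the last index to ensure positivity of $m_{d-1,d}/m_{d-1,d-1}$ so that Lemma \ref{lem:posvaluation} applies.
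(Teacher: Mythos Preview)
Your proposal is correct and follows essentially the same approach as the paper, which simply notes that the argument is the analogue of the proof of Lemma \ref{lem:asdiamond} but simpler. You have correctly identified that the tail move only alters the last column of the basis change matrix, leading via Corollary \ref{cor:uppertriangular} and Lemma \ref{lem:posvaluation} to the single modified inequality at $i=d-1$.
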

\begin{proof} This proof is similar to the proof of Lemma \ref{lem:asdiamond}, but it is simpler.
\end{proof}

The following theorem is the main step in the proof of Theorem \ref{intro:triple} from the introduction.

\begin{thm}\label{thm:topbot} Let $\sigma^\bott$ and $\sigma^\topp$ be the bottom and top snakes in $\Theta^\perp_d$ with snake basis $(u_i^\bott)$ and $(u_i^\topp)$, respectively. Let $\Acal_{\sigma^\bott}$ and $\Acal_{\sigma^\topp}$ denote the corresponding apartments in the $\Rbb$-Euclidean building $\Bcal_d$. Then, the intersection $\Acal_{\sigma^\bott}\cap\Acal_{\sigma^\topp}$ is the image under the marking $f_{\sigma^\bott}$ of the set of $x\in\Abb^{d-1}$ satisfying the inequalities
\begin{align}\label{ineq:main}
\begin{cases}
x_1-x_2\geq 0\\
x_2-x_3\geq \max\{0,v(X_{d-2,1,1})\}\\
x_3-x_4\geq \max\{0,v(X_{d-3,2,1}),v(X_{d-3,2,1}X_{d-3,1,2})\}\\
\ \ \ \ \ \ \ \ \ \vdots\\
x_{n-1}-x_n\geq \max\{0,v(X_{1,d-2,1}),\dots, v(X_{1,d-2,1}\cdots X_{1,1,d-2})\}
\end{cases}
\end{align}
\end{thm}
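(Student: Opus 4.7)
The plan is to interpolate between $\sigma^\bott$ and $\sigma^\topp$ by an explicit sequence of snake moves and to inductively apply Lemmas \ref{lem:asdiamond} and \ref{lem:astail} to track the intersection $\Acal_{\sigma^\bott}\cap\Acal_{\sigma_j}$ at every step. Starting from $\sigma_0=\sigma^\bott$, where the intersection is the entire apartment and is therefore described by the trivial data $(\alpha_i,\beta_i)=(0,+\infty)$ for every $i=1,\dots,d-1$, each elementary move updates these coefficients in a controlled way; after the final move one reaches $\sigma_N=\sigma^\topp$ and reads off the claimed inequalities.

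A convenient choice of path proceeds in $d-1$ stages. For $r=1,\dots,d-1$, the $r$-th stage begins with a single tail move, which advances $\sigma(d)$ from $(0,r-1,d-r)$ to $(0,r,d-r-1)$, followed by a cascade of diamond moves at positions $d-1, d-2, \dots, r+1$, in that order. A direct check of the snake move axioms confirms that each move in this sequence is legal and that, at the end of stage $r$, the first $r+1$ entries of the current snake coincide with those of $\sigma^\topp$; hence the procedure terminates at $\sigma^\topp$ after stage $d-1$. A diamond move at position $m$ along this path uses the triple ratio $X_{d-m,\,\beta_{m-1}+1,\,\gamma_{m-1}+1}$, determined by the current second and third coordinates of $\sigma(m-1)$.

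Iteratively applying the update rules of Lemmas \ref{lem:asdiamond} and \ref{lem:astail} shows that $\alpha_k$ accumulates a net shift of $-v(Y_1 Y_2 \cdots Y_{k-1})$, where $Y_1, \dots, Y_{k-1}$ denote, in the order in which they appear, the triple ratios of the $k-1$ diamond moves performed at position $k$. Meanwhile $\beta_k$ undergoes alternating additive shifts $+v(Y_j)$ from the diamond moves at position $k$ and clipping operations $\beta_k \mapsto \min\{0, \beta_k\}$ from each subsequent diamond move at position $k+1$ (and, when $k = d-1$, from each subsequent tail move). An inductive unrolling yields
\[
\beta_k = \min\bigl\{0,\ v(Y_{k-1}),\ v(Y_{k-2}Y_{k-1}),\ \dots,\ v(Y_1 Y_2 \cdots Y_{k-1})\bigr\}.
\]
Substituting into $x_k - x_{k+1} \geq -\alpha_k - \beta_k$ and distributing the subtraction over the $\min$ converts this into $\max\{0, v(Y_1), v(Y_1 Y_2), \dots, v(Y_1 Y_2 \cdots Y_{k-1})\}$; identifying the $Y_j$ with the triple ratios $X_{d-k,\,b,\,c}$ prescribed by the chosen path then matches \eqref{ineq:main}.

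The main obstacle is the careful combinatorial bookkeeping of the interleaved clips and additive shifts: each tail move and each diamond move at position $k+1$ inserted between two diamond moves at position $k$ is what introduces the next $0$ inside the $\min$, so the sequencing of moves in the chosen path must match precisely the order in which the triple ratios enter the telescoping product. Once this is verified along the explicit path constructed above, the independence of $\Acal_{\sigma^\bott} \cap \Acal_{\sigma^\topp}$ from any particular interpolation guarantees that the resulting formula describes the sought-after intersection.
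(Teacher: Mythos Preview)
Your approach is correct and essentially the same as the paper's: the same interpolating path of snake moves from $\sigma^\bott$ to $\sigma^\topp$, with Lemmas~\ref{lem:asdiamond} and~\ref{lem:astail} applied at each step; the only organizational difference is that the paper packages the argument as an induction on $d$ (using the subtriangle $\Theta^\perp_{d-1}\subset\Theta^\perp_d$ so that only the last inequality $x_{d-1}-x_d$ needs to be tracked explicitly), whereas you track all $d-1$ pairs $(\alpha_i,\beta_i)$ simultaneously. One small caveat: along your path the $j$-th diamond move at position $k$ carries $Y_j=X_{d-k,\,j,\,k-j}$, so the partial products you obtain have the second and third indices running in the opposite order from how \eqref{ineq:main} is displayed---this is exactly what the paper's own computation yields as well, so the discrepancy is only in the labeling convention of the stated formula.
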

\begin{proof} We want to prove this theorem by induction on $d$. The case $d=2$ is reduced to Lemma \ref{lem:astail} as there are only two snakes in $\Theta_2^\perp$ that differ by a tail move.

\begin{figure}
\includegraphics[scale=.2]{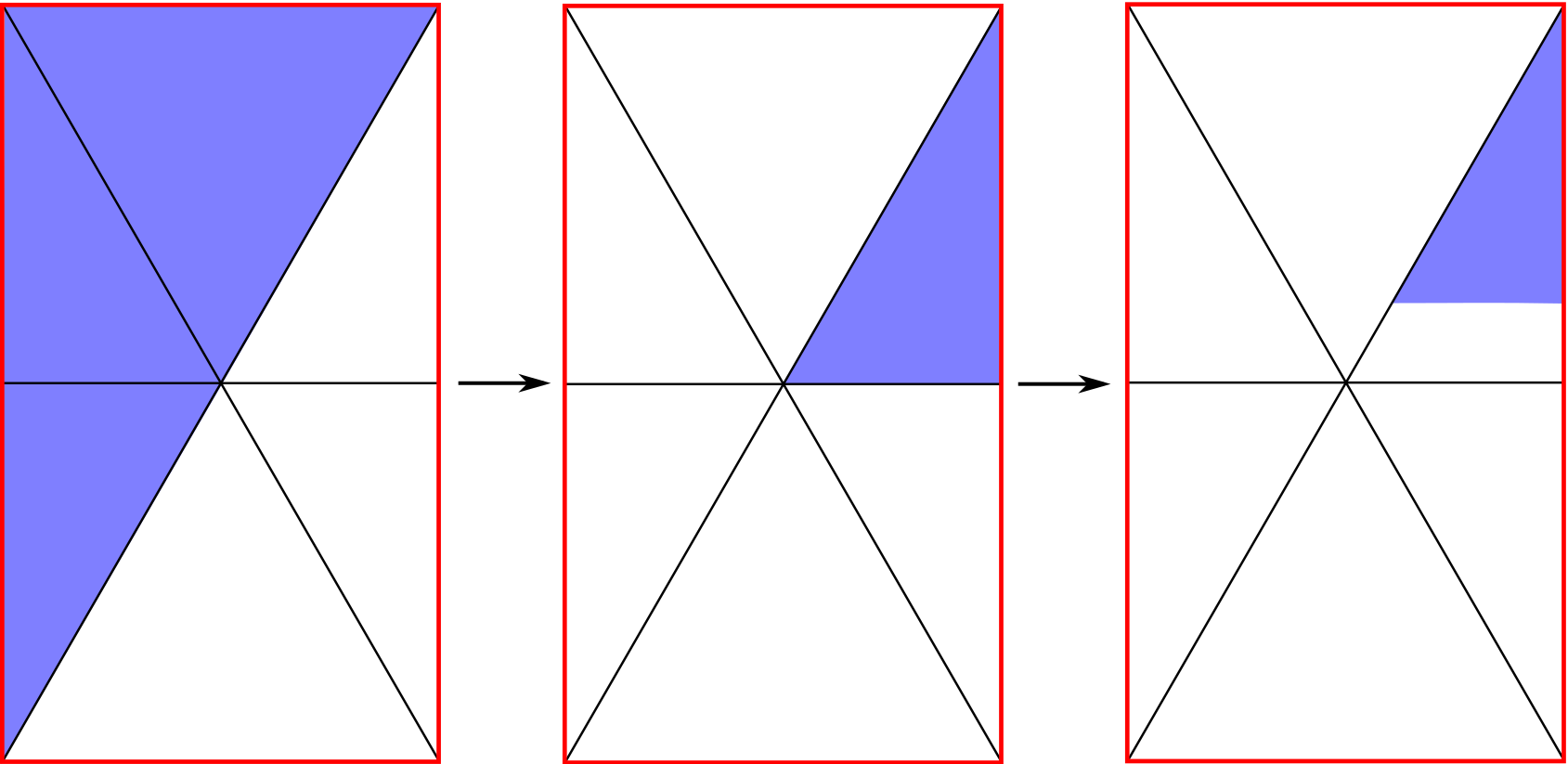}
\put (-570, 85){\makebox[0.7\textwidth][r]{\tiny{diam.}}}
\put (-455, 85){\makebox[0.7\textwidth][r]{\tiny tail}}
\caption{A pictorial version of the proof of Theorem \ref{thm:main} in dimension $d=3$.}
\label{fig:aptdynamics}
\end{figure}

\textbf{Case $d=3$:} In this case, there is only one triple ratio $X_{1,1,1}$. We can obtain the top snake from the bottom snake with a sequence of a tail move, a diamond move at $k=1$ and another tail move. Therefore, applying Lemma \ref{lem:asdiamond} and Lemma \ref{lem:astail} we have
\begin{align*}
&\begin{cases}
x_1-x_2\geq -\infty\\
x_2-x_3\geq -\infty
\end{cases}
\stackrel{\text{tail}}{\Rightarrow}
\begin{cases}
x_1-x_2\geq -\infty\\
x_2-x_3\geq 0
\end{cases}\\
&\stackrel{\text{diamond}}{\Rightarrow}
\begin{cases}
x_1-x_2\geq 0\\
x_2-x_3-v(X_{1,1,1})\geq -v(X_{1,1,1})
\end{cases}\\
&\stackrel{\text{tail}}{\Rightarrow}
\begin{cases}
x_1-x_2\geq 0\\
x_2-x_3-v(X_{1,1,1})\geq -\min\{0,v(X_{1,1,1})\}
\end{cases}
\end{align*}
and we conclude by observing that 
\[
-\min\{0,v(X_{1,1,1})\}+v(X_{1,1,1})=-\min\{0,-v(X_{1,1,1})\}=\max\{0,v(X_{1,1,1})\}.
\]
See Figure \ref{fig:aptdynamics}.

We now assume the result is true for $d-1$ and we prove it for $d$.

\textbf{Case $d>3$:} Consider the subtriangle $\Theta^\perp_{d-1}\subset\Theta^\perp_d$ as in Figure \ref{fig:mainthm}. The key observation, which follows from Lemma \ref{lem:asdiamond} and Lemma \ref{lem:astail}, is that any snake move at a vertex in $(\Theta^\perp_{d-1})^\circ$ does not affect the inequality involving the variables $x_{d-1}$ and $x_d$.

Starting with $\sigma^\bott$, by Lemmas \ref{lem:asdiamond} and \ref{lem:astail}, performing a tail move and a diamond move at $d-1$ gives us the new inequality
\[
x_{d-1}-x_d-v(X_{1,1,d-2})\geq -\min\{0,\infty\}-v(X_{1,1,d-2}).
\]
We proceed by performing diamond moves at $k$ for $k=1,2,\dots,d-2$. Again, by Lemma \ref{lem:asdiamond} these moves do not affect the last inequality, and therefore we still have
\begin{align*}
x_{d-1}-x_d-v(X_{1,1,d-2})\geq -v(X_{1,1,d-2}).
\end{align*}
We then perform a tail move in $\Theta^\perp_d$ which gives
\begin{align*}
x_{d-1}-x_d-v(X_{1,1,d-2})\geq -\min\{0,v(X_{1,1,d-2})\}
\end{align*}
and then a diamond move at $d-1$ in the triangle $\Theta^\perp_d$. This changes the last inequality to
\begin{align*}
x_{d-1}-x_d-v(X_{1,1,d-2}X_{1,2,d-3})\geq -\min\{0,v(X_{1,1,d-2})\}-v(X_{1,2,d-3})
\end{align*}
which is equivalent to
\begin{align*}
x_{d-1}-x_d-v(X_{1,1,d-2}X_{1,2,d-3}) \geq -\min\{v(X_{1,2,d-3}),v(X_{1,2,d-3}X_{1,2,d-2})\}.
\end{align*}
We conclude by iterating this procedure. More precisely, consider the level sets in $\Theta^\perp_d$ given by fixing the value of the second variable $b$ (these are horizontal lines in the discrete triangle $\Theta_d^\perp$). We can proceed by induction as $b$ varies between $1$ and $d-2$. The discussion above proves our claim for the cases $b=1$ and $2$. To simplify notation, set
\[
V_{b,a}=v(X_{1,b,d-b-1}X_{1,b-1,d-b-2}\dots X_{1,b-a,d-(b-a)-1})
\]
with $b=1,\dots,d-2$ and $a=0,\dots,b-1$. Observe that $V_{b,a}+v(X_{1,b+1,d-b-2})=V_{b,a}+V_{b+1,0}=V_{b+1,a+1}$. Assume
\begin{align*}
x_{d-1}-x_d-V_{b,b-1}\geq -\min\{V_{b,0}, V_{b,1}, \dots, V_{b,b-1}\}.
\end{align*}
Applying a tail move to the snake we obtain
\begin{align*}
x_{d-1}-x_d-V_{b,b-1}\geq -\min\{0,V_{b,0}, V_{b,1}, \dots, V_{b,b-1}\}.
\end{align*}
Hence, with a diamond move we have 
\begin{align*}
x_{d-1}-x_d-V_{b+1,b}&\geq -\min\{0,V_{b,0}, V_{b,1}, \dots, V_{b,b-1}\}-v(X_{1,b+1,d-b-2})\\
&\Updownarrow\\
x_{d-1}-x_d-V_{b+1,b}&\geq -\min\{V_{b+1,0}, V_{b+1,1}, \dots, V_{b+1,b}\}.
\end{align*}
In other words, we showed that the formula repeats itself when we apply a tail move followed by a diamond move. The result then follows because this process ends with a tail move in $\Theta_d^\perp$ which has the effect of changing the right hand side of the inequality
\begin{align*}
x_{d-1}-x_d-V_{d-2,d-3}\geq -\min\{V_{d-2,0}, V_{d-2,1}, V_{d-2,2}, \dots, V_{d-2,d-3}\}
\end{align*}
to $ -\min\{0,V_{d-2,0}, V_{d-2,1}, V_{d-2,2}, \dots, V_{d-2,d-3}\}$.
\begin{figure}
\includegraphics[scale=.3]{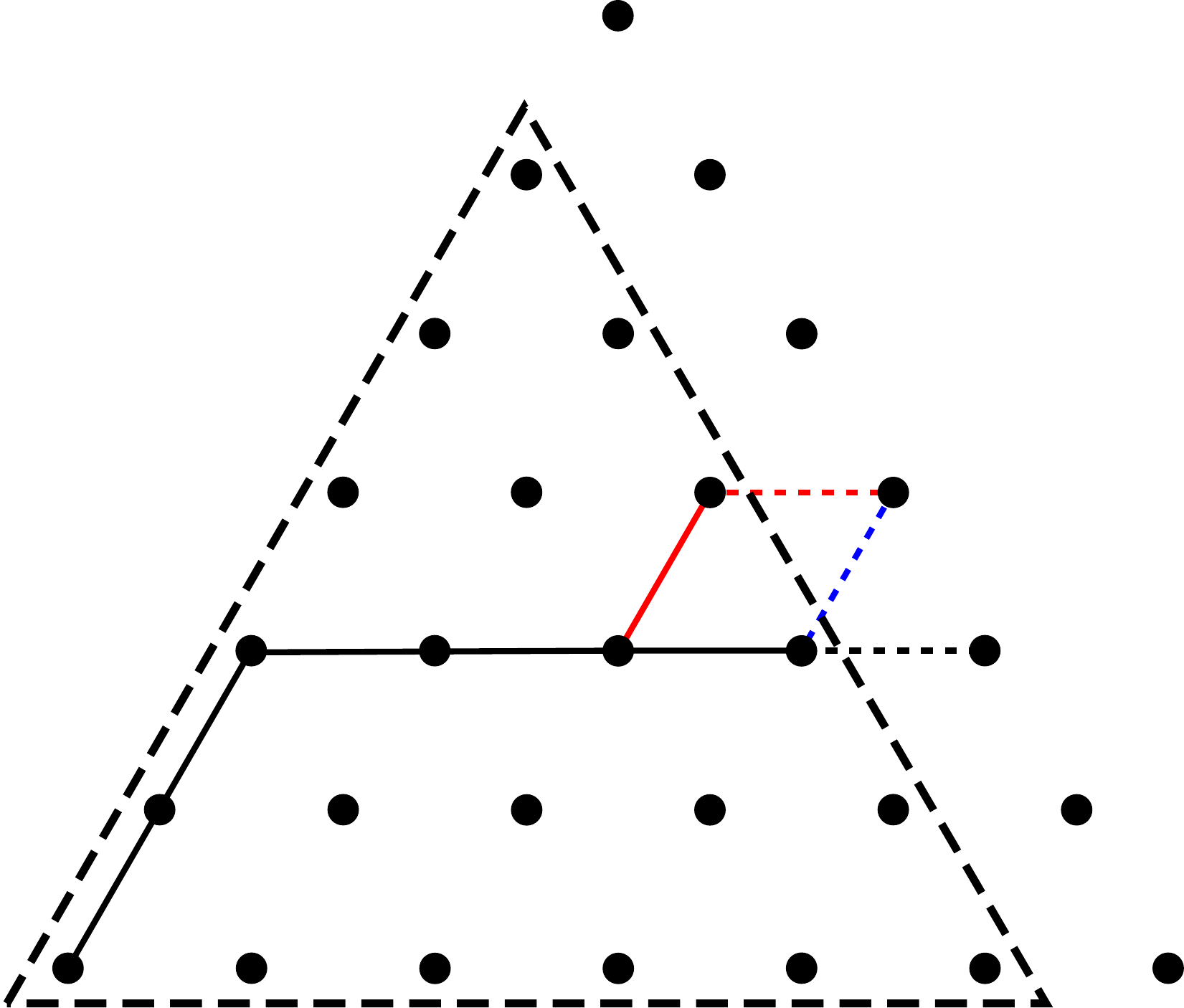}
%\put (-363, 0){\makebox[0.7\textwidth][r]{\footnotesize$E$ }}
%\put (-243, 0){\makebox[0.7\textwidth][r]{\footnotesize$G$}}
%\put (-303, 96){\makebox[0.7\textwidth][r]{\footnotesize$F$}}
%\caption{A tail move in $\Theta^\perp_{d-1}$ as a composition of a tail and a diamond move in $\Theta^\perp_d$.}
\caption{Diamond moves at $d-1$ in the triangle $\Theta_d^\perp$ correspond to tail moves in $\Theta_{d-1}^\perp$.}
\label{fig:mainthm}
\end{figure}
\end{proof}

The edges of the discrete triangle $\Theta^\perp_d$ determine three apartments $\Acal_{EG}$, $\Acal_{GF}$ and $\Acal_{EF}$ associated to line decompositions defined by the pairs of flags $(E,G)$, $(G,F)$ and $(E,F)$, respectively. The pairwise intersections of these three apartments are non-empty by Lemma \ref{lem:snakeapt}. The following theorem expresses these intersections in terms of the valuations of the triple ratios $X_{a,b,c}$.

\begin{thm}[Theorem \ref{intro:triple}]\label{thm:main} Let $(E_n,F_n,G_n)$ be a sequence of positive triples of flags in $\Rbb^d$ such that the ultralimit $(E,F,G)$ is positive. Let $\Acal_{EG}$, $\Acal_{GF}$ and $\Acal_{FE}$ be the apartments associated to the triple $(E,F,G)$. There exists a marking $f_{EG}$ of $\Acal_{EG}$ such that
\begin{itemize}
	\item[-] the intersection $\Acal_{EG}\cap \Acal_{EF}$ is the image under $f_{EG}$ of the subset of $\Abb^{d-1}$ described by the inequalities: 
	\begin{equation}\label{eqn:mainthm1}
	\begin{cases}
	x_1-x_2\geq 0\\
	x_2-x_3\geq \max\{0,v(X_{d-2,1,1})\}\\
	x_3-x_4\geq \max\{0,v(X_{d-3,2,1}),v(X_{d-3,2,1}X_{d-3,1,2})\}\\
	\ \ \ \ \ \ \ \ \vdots\\
	x_{d-1}-x_d\geq \max\{0,v(X_{1,d-2,1}),\dots, v(X_{1,d-2,1}\cdots X_{1,1,d-2})\}
	\end{cases}
	\end{equation}
	\item[-] the intersection $\Acal_{EG}\cap \Acal_{GF}$ is the image under $f_{EG}$ of the subset of $\Abb^{d-1}$ described by the inequalities: 
	\begin{equation}\label{eqn:mainthm2}
	\begin{cases}
	x_1-x_2\leq \min\{0, v(X_{1,d-2,1}),\dots, v(X_{1,d-2,1}\cdots X_{d-2,1,1}) \}\\
	\ \ \ \ \ \ \ \ \vdots\\
	x_{d-3}-x_{d-2}\leq \min\{0,v(X_{1,2,d-3}),v(X_{1,2,d-3}X_{2,1,d-3})\}\\
	x_{d-2}-x_{d-1}\leq \min\{0,v(X_{1,1,d-2})\}\\
	x_{d-1}-x_d\leq 0
	\end{cases}
	\end{equation}
\end{itemize}
\end{thm}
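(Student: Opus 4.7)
The plan is to apply Theorem \ref{thm:topbot} twice. Take $f_{EG}$ to be the standard marking of the bottom snake $\sigma^\bott$ of the ordered triple $(E,F,G)$ introduced in Section \ref{sec:snakesapart}; the corresponding line decomposition of $(\Fbb^d)^*$ consists of the lines $L_i = (E^{(d-i)}\oplus G^{(i-1)})^\perp$, so $\Acal_{\sigma^\bott} = \Acal_{EG}$, and analogously the top snake of $(E,F,G)$ satisfies $\Acal_{\sigma^\topp} = \Acal_{EF}$. Theorem \ref{thm:topbot} applied to $(E,F,G)$ then yields the inequalities \ref{eqn:mainthm1} describing $\Acal_{EG}\cap\Acal_{EF}$ in the $f_{EG}$-coordinates.

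For the intersection $\Acal_{EG}\cap\Acal_{GF}$, I apply Theorem \ref{thm:topbot} a second time, now to the reordered triple $(G,F,E)$. This triple is again positive: positivity for a configuration of three flags is preserved under permutations of the flags by Remark \ref{rmk:symtriple}, since any such permutation replaces each triple ratio by itself or its inverse. In the triangle $\Theta_d^\perp$ labelled by $(G,F,E)$, the bottom snake $\tilde\sigma^\bott$ travels from the $G$-vertex to the $E$-vertex and thus produces the apartment $\Acal_{EG}$ (same line decomposition, but traversed in reverse), while the top snake produces $\Acal_{GF}$. Theorem \ref{thm:topbot} applied to $(G,F,E)$ therefore supplies a description of $\Acal_{EG}\cap\Acal_{GF}$ in a second standard marking $\tilde f$ of $\Acal_{EG}$, expressed in terms of the triple ratios $X_{a,b,c}(G,F,E) = X_{c,b,a}(E,F,G)^{-1}$ (again via Remark \ref{rmk:symtriple}).

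The key intermediate step is to compare the two markings $\tilde f$ and $f_{EG}$. Their induced line decompositions are the same lines but indexed in reverse, $\tilde L_k = L_{d-k+1}$, and a direct verification shows that the auxiliary lines appearing in the snake-basis recursion of Lemma \ref{lem:snakebase} also match up: $\tilde L''_{k+1} = L''_{d-k+1}$. This means the linear relations in Lemma \ref{lem:snakebase} that define the two snake bases are formally identical. Hence, after fixing the normalization $\tilde u_1 = u_d$, iterating the sign convention of Equation \ref{eqn:snakebasis} yields $\tilde u_k = u_{d-k+1}$ for every $k$. Remark \ref{rmk:actiononapts} then forces the change of marking to be the pure coordinate reversal $\tilde f(\tilde x_1,\ldots,\tilde x_d) = f_{EG}(\tilde x_d,\ldots,\tilde x_1)$, with no translation component.

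Finally, substituting $x_i = \tilde x_{d-i+1}$ into the inequalities coming from Theorem \ref{thm:topbot} applied to $(G,F,E)$ and using $v(X_{a,b,c}(G,F,E)) = -v(X_{c,b,a}(E,F,G))$ converts each $\max$ of sums of valuations into the negative of the corresponding $\min$, flips $\geq$ into $\leq$, and reindexes $k \mapsto d-k$, reproducing exactly the inequalities \ref{eqn:mainthm2}. The main obstacle I anticipate is the clean verification of the identity $\tilde u_k = u_{d-k+1}$: it rests on matching the sign conventions of Equation \ref{eqn:snakebasis} across the two snake setups and on the identification $\tilde L''_{k+1} = L''_{d-k+1}$ of the forbidden lines. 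Once this is established, the translation of inequalities is a routine index manipulation.
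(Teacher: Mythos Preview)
Your proposal is correct and follows essentially the same route as the paper: apply Theorem \ref{thm:topbot} to $(E,F,G)$ for the first set of inequalities, then to the permuted triple $(G,F,E)$ for the second, and relate the two bottom-snake markings of $\Acal_{EG}$ via the longest Weyl element $w_0$ using Remark \ref{rmk:symtriple} and Remark \ref{rmk:actiononapts}. Your verification that $\tilde u_k = u_{d-k+1}$ (hence no translation part) is in fact more explicit than the paper's own treatment of this step, and your identification $\tilde L''_{k+1} = L''_{d-k+1}$ together with the uniqueness in Lemma \ref{lem:snakebase} is exactly what makes the induction go through.
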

\begin{proof}
The proof follows by combining Remark \ref{rmk:symtriple} and Theorem \ref{thm:topbot}.
The formula for $\Acal_{EG}\cap\Acal_{EF}$ follows at once from Theorem \ref{thm:topbot}. Permute the positive maximum span triple from $(E,F,G)$ to $(G,F,E)$. Then, we can apply Theorem \ref{thm:topbot} to the triple $(G,F,E)$ in order to find the intersection of the apartments $\Acal_{EG}\cap\Acal_{GF}$. Let $(v_i^\bott)$ denote the basis associated to the bottom snake in the discrete triangle $\Theta^\perp_d$ with respect to the maximum span triple $(G,F,E)$. By choosing $v_1^\bott=u_{d-1}^\bott$, we have that $v_i^\bott=u_{d-i}^\bott$. This allows us to explicitly relate the markings $f_{EG}$ and  $f_{GE}$. Namely, $f_{GE}$ is obtained from $f_{EG}$ by the permutation defined by
\[
w_0=\begin{pmatrix}
0 & \dots & 0 & 1\\
\vdots & \iddots & \iddots &0\\
0 & 1 & \dots &\vdots \\
1 & 0 & \dots & 0
\end{pmatrix}\in\mathfrak S_d
\]
Moreover, by Remark \ref{rmk:symtriple} we have $X_{a,b,c}(G,F,E)=X^{-1}_{c,b,a}(E,F,G)$. Therefore, by Theorem \ref{thm:topbot} and Remark \ref{rmk:actiononapts} we have that $\Acal_{EG}\cap \Acal_{FG}$ is the image via $f_{EG}$ of the set
\[
\begin{cases}
x_d-x_{d-1}\geq 0\\
x_{d-1}-x_{d-2}\geq \max\{0,-v(X_{1,1,d-2})\}\\
x_{d-2}-x_{d-3}\geq \max\{0,-v(X_{1,2,d-3}),-v(X_{d-3,2,1}X_{2,1,d-3})\}\\
\ \ \ \ \ \ \ \ \ \ \ \ \vdots\\
x_2-x_1\geq \max\{0,-v(X_{1,d-2,1}),\dots, -v(X_{1,d-2,1}\cdots X_{d-2,1,1})\}
\end{cases}
\]
It is easy to see that these inequalities are equivalent to the ones in the statement of the theorem.
\end{proof}

%\begin{figure}
%\includegraphics[scale=.5]{MainAX4.pdf}
%\put (-360, 95){\makebox[0.7\textwidth][r]{\tiny$\Acal_{EG}\cap\Acal_{EF}$}}
%%\put (-427, 85){\makebox[0.7\textwidth][r]{\tiny$f_{EG}(0)$}}
%\put (-450, 30){\makebox[0.7\textwidth][r]{\tiny$\Acal_{EG}\cap\Acal_{FG}$}}
%\label{fig:intersection}
%\caption{The Weyl sectors described in Theorem \ref{thm:main} in the case $d=4$.}
%\end{figure}

\begin{remark} The marking $f_{EG}$ from Theorem \ref{thm:main} determines a preferred point $f_{EG}(1/d,1/d,\dots, 1/d)\in\Acal_{EG}$. Geometrically, Theorem \ref{thm:main} says that the intersections $\Acal_{EG}\cap \Acal_{EF}$ and $\Acal_{EG}\cap\Acal_{FG}$ are Weyl sectors $f_{EG}(\mathfrak C_1)$ and $f_{EG}(\mathfrak C_2)$ contained in the opposite Weyl sectors based at $f_{EG}(1/d,1/d,\dots, 1/d)$. Note that the triple intersection $\Acal_{EG}\cap\Acal_{FE}\cap\Acal_{GF}$ is a point when all triple ratios have valuation equal to zero. For $d=2$ this recovers the fact that if three apartments (lines) in an $\Rbb$-tree intersect pairwise along half-lines, then they form a tripod. For $d=3$, Theorem \ref{thm:main} was proved by Parreau \cite{Par4} in greater generality, but with different methods.
\end{remark}

\begin{remark} Theorem \ref{thm:main} and Remark \ref{rmk:symtriple} suffice to describe the intersections $\Acal_{E',G'}\cap\Acal_{F',G'}$ for any $E',F',G'$ with $\{E',F',G'\}=\{E,F,G\}$. 
\end{remark}

%\begin{example}
%Figure \ref{fig:intersection} represents the case of $d=4$ when
%\end{example}

%%%%%%%%%%%%%%%%%%%%%%%%%%%%%%%
\subsection{Shearing in $\Bcal_{d}$}
%%%%%%%%%%%%%%%%%%%%%%%%%%%%%%%

The analogous of Theorem \ref{thm:main} for the positive ultralimit of a sequence of positive quadruple of flags $(E_n,F_n,G_n,H_n)$ follows from Proposition \ref{prop:shearing}. Let us ease notation by setting
\[
Z_i:=\ulim Z_i(E_n,F_n,G_n,H_n)
\]
which we are assuming to be a positive element in $\Fbb$.
%\begin{figure}
%\includegraphics[scale=.5]{Shearing2.pdf}
%%\put (-436, 115){\makebox[0.7\textwidth][r]{\tiny$f_{EF}(0)$}}
%%\put (-483, 130){\makebox[0.7\textwidth][r]{\tiny$f'_{EF}(0)$}}
%\caption{The solid cones correspond to the intersections $\Acal_{EF}\cap\Acal_{EG}$ and $\Acal_{EF}\cap\Acal_{FG}$. The shaded regions correspond to the intersections $\Acal_{EF}\cap\Acal_{EH}$ and $\Acal_{EF}\cap\Acal_{FH}$. The element $w_{(E,F,G,H)}$ is the translation by the vector connecting the origins of the two frames.}
%\label{fig:shearing}
%\end{figure} 

\begin{thm}\label{thm:shearing} Let $(E_n,F_n,G_n,H_n)$ be a sequence of positive quadruples of flags in $\Rbb^d$ with positive ultralimit the quadruple of flags $(E,F,G,H)$ in $\Fbb^d$. Consider the markings $f_{EG}$ and $f'_{EG}$ of the apartment $\Acal_{EG}$ obtained by applying Theorem \ref{thm:main} to the sequence of positive triples of flags $(E_n,F_n,G_n)$ and $(E_n,H_n,G_n)$, respectively. Then, the element
\[
w_{(E,F,G,H)}:=f_{EG}^{-1}\circ f'_{EG}\in W_{\text{aff}}
\]
is the translation by the unique vector $(z_1,z_2,\dots,z_d)\in\mathbb V^{d-1}$ such that $z_i-z_{i+1}=-v(Z_{d-i})$.
\end{thm}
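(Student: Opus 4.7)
The plan is to pass to the ultralimit of Proposition \ref{prop:shearing} and reinterpret the resulting diagonal rescaling of bases as a translation via Remark \ref{rmk:actiononapts}. By the construction used in the proof of Theorem \ref{thm:main}, $f_{EG}$ is the standard marking obtained from the ultralimit $(u_i)$ of the bottom snake bases $(u_{i,n})$ associated to the triples $(E_n,F_n,G_n)$, and $f'_{EG}$ comes from the ultralimit $(u'_i)$ of the bottom snake bases of $(E_n,H_n,G_n)$. Both bases are adapted to the same line decomposition $L_i = (E^{(d-i)} \oplus G^{(i-1)})^\perp$, so $f_{EG}$ and $f'_{EG}$ are markings of the same apartment $\Acal_{EG}$.

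Next, I would compare $(u'_i)$ with the top snake basis $(U_i)$ of the triple $(E,G,H)$ that appears in Proposition \ref{prop:shearing}. Unwinding the sign convention in Equation \ref{eqn:snakebasis}, both bases start from the same line $(E^{(d-1)})^\perp$ and are then defined recursively by solving the same linear equation \emph{previous basis vector} $+$ \emph{vector in} $L_{i+1}$ $+$ \emph{vector in the} $H$-\emph{auxiliary line} $=0$, but record the main-line vector with opposite signs. Hence $u'_i = \epsilon_i U_i$ for some signs $\epsilon_i \in \{\pm 1\}$, and since $v(\pm 1) = 0$, Remark \ref{rmk:actiononapts} ensures that the standard markings determined by $(u'_i)$ and by $(U_i)$ coincide, so I may replace $(u'_i)$ with $(U_i)$ without loss.

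Applying Proposition \ref{prop:shearing} to each quadruple $(E_n, F_n, G_n, H_n)$ and passing to the ultralimit via Lemma \ref{lem:assnakes} and Proposition \ref{prop:ulimend}, the positivity hypothesis yields $U_i = y_i u_i$ in $(\Fbb^d)^*$, where $y_i := Z_1 \cdots Z_{d-i} \in \Fbb_{>0}$ for $0 < i < d$ and $y_d := 1$. Remark \ref{rmk:actiononapts} then gives $f_{EG}(x_1, \dots, x_d) = f'_{EG}(x_1 + \tilde y_1, \dots, x_d + \tilde y_d)$, where $(\tilde y_1, \dots, \tilde y_d) \in \mathbb V^{d-1}$ is the unique vector satisfying $\tilde y_i - \tilde y_{i+1} = v(y_i) - v(y_{i+1}) = v(Z_{d-i})$. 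Therefore $w_{(E, F, G, H)} = f_{EG}^{-1} \circ f'_{EG}$ is the translation by $(z_1, \dots, z_d) := -(\tilde y_1, \dots, \tilde y_d) \in \mathbb V^{d-1}$, which satisfies $z_i - z_{i+1} = -v(Z_{d-i})$ as required; uniqueness in $\mathbb V^{d-1}$ is immediate from the mean-zero constraint.

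The main subtlety is the sign bookkeeping in the second step, since Proposition \ref{prop:shearing} only compares the bottom snake basis of $(E,F,G)$ with the top snake basis of $(E,G,H)$, whereas $f'_{EG}$ is defined via the bottom snake basis of $(E,H,G)$. Everything else is a direct unpacking of the definitions.
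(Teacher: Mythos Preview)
Your proof is correct and follows essentially the same route as the paper's: invoke Proposition~\ref{prop:shearing} to obtain $U_i = Z_1\cdots Z_{d-i}\,u_i$, then translate this diagonal rescaling into an element of $W_{\text{aff}}$ via Remark~\ref{rmk:actiononapts}. You are in fact slightly more careful than the paper, which silently identifies the basis $(U_i)$ of Proposition~\ref{prop:shearing} with the bottom-snake basis of $(E,H,G)$; your observation that these differ only by signs $\epsilon_i\in\{\pm1\}$ (the top snake of $(E,G,H)$ versus the bottom snake of $(E,H,G)$ record the main-line vector with opposite conventions in Equation~\ref{eqn:snakebasis}) and that $v(\pm1)=0$ makes the markings agree is the right way to close this small gap.
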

\begin{proof} Recall that the sequence of flags $(E_n,F_n,G_n)$ determines a sequence of bases $(u_{i,n})$ for the line decomposition associated to the flags $(E_n,G_n)$. Likewise, the sequence of flags $(E_n,H_n,G_n)$ determines a sequence of bases $(U_{i,n})$ for the line decomposition associated to the flags $(E_n,G_n)$. Thanks to Lemma \ref{lem:assnakes}, the ultralimits of the basis $(u_{i,n})$ and $(U_{i,n})$ define bases $(u_i)$ and $(U_i)$ of the vector space $V=(\Fbb^d)^*$. The markings $f_{EG}$ and $f'_{EG}$ in Theorem \ref{thm:main} are the standard markings of the bases $(u_i)$ and $(U_i)$, respectively. Proposition \ref{prop:shearing} implies that $U_i=Z_1Z_2\cdots Z_{d-i}u_i$. Applying Remark \ref{rmk:actiononapts}, we have that for all $(x_1,x_2,\dots,x_d)$, 
\[
f_{(u_i)}^{-1}\circ f_{(U_i)}(x_1,x_2,\dots,x_d)=(x_1+z_1,x_2+z_2,\dots, x_d+z_d)
\]
where $(z_1,z_2,\dots,z_d)\in\mathbb V^{d-1}$ is such that
\begin{align*}
z_i-z_{i+1}&=v((Z_1Z_2\cdots Z_{d-i})^{-1})-v((Z_1Z_2\cdots Z_{d-i-1})^{-1})\\
&=-v(Z_{d-i})
\end{align*}
which is what needed to be proved.
\end{proof}

\begin{remark} Theorem \ref{thm:shearing} is equivalent to \cite[Prop. 4.5]{Par4}. The reader should be aware of the small difference between the double ratios and the edge parameters as explained in \cite[\S 2.6]{Par4}.
\end{remark}

\bibliography{mybib}
\bibliographystyle{plain}

\end{document}